\numberwithin{equation}{section}
\theoremstyle{plain} 
\newtheorem{thm}{Theorem}[section]
\newtheorem{cor}[thm]{Corollary}
\newtheorem{lem}[thm]{Lemma}
\newtheorem{prop}[thm]{Proposition}
\theoremstyle{definition}
\newtheorem{defn}[thm]{Definition}
\theoremstyle{remark}
\newtheorem{rem}[thm]{Remark}
\def\fin{$\diamondsuit$\hskip-0.26truecm \raise0.4ex\hbox{$\checkmark$}}
\begin{document}

\makeatletter
\numberwithin{equation}{section}
\renewcommand{\sectionmark}[1]{\markright{
	\thesection. #1}{}}

\setcounter{section}{0}
 \def\@makechapterhead#1{%
  \vspace*{25\p@}%
  {\parindent \z@ \raggedright \normalfont
    \ifnum \c@secnumdepth >\m@ne
      \if@mainmatter
        \centering\LARGE\bfseries \@chapapp\space \thechapter
        \par\nobreak
        \vskip 10\p@
      \fi
    \fi
    \interlinepenalty\@M
    \huge \bfseries #1\par\nobreak
    \vskip 30\p@
  }}


  \def\section{\@startsection {section}{1}{\z@}
                                   {-3.5ex \@plus -1ex \@minus -.2ex}
                                   {2.3ex \@plus.2ex}
                                   {\normalfont\large\bfseries\centering\S\;\;}}
  \def\subsection{\@startsection {subsection}{1}{\z@}
                                   {-3.5ex \@plus -1ex \@minus -.2ex}
                                   {2.3ex \@plus.2ex}
                                   {\normalfont\large\bfseries\centering\ \;\;}}
\makeatother

\title[AUTOMORPHISMS FOR $\sigma-$PBW EXTENSIÓN AND QUANTUM POLINOMIAL RINGS]{AUTOMORPHISMS FOR SKEW $PBW$ EXTENSIONS AND SKEW QUANTUM POLYNOMIAL RINGS}

\author{C\'esar Fernando Venegas Ram\'irez\\
José Oswaldo Lezama Serrano}
\address{Seminar of Constructive algebra $SAC^2$\\ Universidad Nacional de Colombia\\
Bogot\'a, Colombia}
\email{cfvenegasr@unal.edu.co}
\date{\today}

 \begin{abstract}

In this work we study the automorphisms of skew $PBW$ extensions and skew quantum polynomials. We use Artamonov's works as reference for getting the principal results about automorphisms for  generic skew $PBW$ extensions  and generic skew quantum polynomials. In general, if we have an endomorphism on a generic skew $PBW$ extension and there are some $x_i,x_j,x_u$  such that the endomorphism is not zero on this elements and the principal coefficients are invertible, then   endomorphism act over $x_i$ as $a_ix_i$ for some $a_i$ in the ring of coefficients. Of course, this is valid for quantum polynomial rings, with $r=0$,  as such Artamonov shows in his work. We use this result for giving some  more general results  for skew $PBW$ extensions, using filtred-graded  techniques. Finally, we use localization for characterize some class the endomorphisms and automorphisms for skew $PBW$ extensions and skew quantum polynomials over Ore domains.

 \end{abstract}


\keywords{Quantum polynomials, skew quantum polynomials, skew $PBW$ extensions, automorphisms, endomorphisms, graded, filtration, Ore domain.}

\maketitle

 \tableofcontents

{\color{red}
 \section{Introduction}\label{sec:intro}}
 Let $D$ be a division ring with a fixed set $\alpha_1,\ldots, \alpha_n$, $n\geq 2$, of its automorphisms. We have fix elements $q_{ij} \in D^*$, $i,j=1,\ldots, n$, satisfying  the equalities  
 \begin{eqnarray}
 q_{ii} = q_{ij} q_{ji} = Q_{ijr} Q_ {jri} Q_{rij} = 1, \qquad 
 \alpha_i (\alpha_j (d)) = q_{ij} \alpha_j (\alpha_i (d))q_{ji} ,
 \end{eqnarray}
 where
 
 \begin{eqnarray}
 Q_{ijr} = q_{ij} \alpha_j (q_{ir} ), \text{ and } d \in D.
 \end{eqnarray}
 
 \begin{eqnarray}
 Q = (q_{ij} ) \in Mat_n(D) \text{ and } \alpha = (\alpha_1 , \ldots , \alpha_n ).
 \end{eqnarray}
 
 \begin{defn}
 The entries $q_{ij}$ of the matrix $Q$ form  a system of multiparameters. 
 \end{defn}
 \begin{defn}[Quantum polynomial ring]\label{definitionquamtumring}
 Denote by 
 \begin{eqnarray}
 \Lambda:=D_{Q,\alpha}[x_1^{\pm 1}, x_2^{\pm 2},\ldots, x_r^{\pm 1},x_{r+1}, \ldots, x_n] \label{quantumpolynomial}
 \end{eqnarray}
 the associative ring generated by $D$ and by the elements 
 
 \begin{eqnarray}
 x_1,\ldots, x_n, x_1^{- 1}, \ldots, x_r^{- 1}, 
 \end{eqnarray}
 subject to  the defining relations 
 \begin{eqnarray}
 x_ix_i^{-1}&=& x_i^{-1}x_i=1,\qquad 1\leq i \leq r,\\
 x_id&=&\alpha_i(d)x_i, \qquad d \in D, \quad i=1,\ldots, n \\
 x_ix_j&=& q_{ij}x_jx_i, \qquad i,j=1,\ldots, n.
 \end{eqnarray}
 \end{defn}
 
 We denoted by $End(\Lambda)$ the semigroup of all ring endomorphisms of $\Lambda$ acting identically on $D$. Denote by $Aut(\Lambda)$ the group of all ring automorphisms of $\Lambda$, acting identically on $D$, i.e., the invertible elements of the semigroup $End(\Lambda)$.
 {\color{red}
 \subsection{Automorphisms for the quantum space.}
 }
Alev and Chamarie (see \cite{alev-chamarie}) show some results very interesting on automorphisms of quantum space over fields $k$ of characteristic zero, using a des\-crip\-tion for derivations. Remember that the quantum space is treated as the ring \eqref{quantumpolynomial}, for the case in that $D=k$ is a field, $r=0$ and $\alpha_1=\alpha_2= \cdots =\alpha_n=id_k$. 

\begin{lem}
A matrix $\alpha:=(\alpha_{ij}) \in Mat_n(k)$ define a  linear automorphism of $\Lambda$ if only if $\alpha$ satisfy the following relation for all $i,j,k,l$; $i<j$ and $k \leq l$:
\begin{eqnarray}
\alpha_{ik}\alpha_{jl}(1-q_{ij}q_{lk})=\alpha_{il}\alpha_{jk}(q_{ij}-q_{lk}). \label{ecuacion1alev}
\end{eqnarray}
\end{lem}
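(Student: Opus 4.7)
The plan is to reduce the homomorphism condition to a comparison of coefficients in the PBW basis of $\Lambda$. Since we are considering a linear endomorphism, it is enough to define $\alpha$ on the generators by $\alpha(x_i) = \sum_{k=1}^{n} \alpha_{ik} x_k$ and to impose that $\alpha$ preserves every defining relation $x_i x_j - q_{ij} x_j x_i = 0$. Because these relations are antisymmetric in $(i,j)$ (they are equivalent under swapping $i$ and $j$, since $q_{ij}q_{ji}=1$), it suffices to check them for $i<j$.

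First I would compute
\begin{equation*}
\alpha(x_i)\alpha(x_j) - q_{ij}\alpha(x_j)\alpha(x_i) = \sum_{k,l} \bigl(\alpha_{ik}\alpha_{jl} - q_{ij}\alpha_{jk}\alpha_{il}\bigr)\, x_k x_l,
\end{equation*}
and then rewrite each $x_k x_l$ in the PBW normal form $x_a x_b$ with $a\le b$, using $x_l x_k = q_{lk} x_k x_l$ whenever $k<l$. Collecting the coefficient of a standard monomial $x_k x_l$ with $k<l$ in the resulting expression yields
\begin{equation*}
\alpha_{ik}\alpha_{jl}(1-q_{ij}q_{lk}) - \alpha_{il}\alpha_{jk}(q_{ij}-q_{lk}),
\end{equation*}
and the coefficient of $x_k^2$ gives $(1-q_{ij})\alpha_{ik}\alpha_{jk}$, which is the $k=l$ instance of the same identity. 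Since the standard monomials $\{x_kx_l : k\le l\}$ are linearly independent over $k$ in $\Lambda$ (this is the PBW property of the quantum polynomial ring), each of these coefficients must vanish. This yields equation \eqref{ecuacion1alev} and proves the ``only if'' direction.

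For the converse, if \eqref{ecuacion1alev} holds for all $i<j$ and $k\le l$, then reversing the above computation shows that the linear map determined by $\alpha(x_i)=\sum_k \alpha_{ik} x_k$ sends every defining relation to zero, so by the universal property of $\Lambda$ (presented by these relations over $k$) it extends to a well-defined algebra endomorphism; when the matrix $\alpha$ is invertible this endomorphism is a linear automorphism.

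The main obstacle I anticipate is purely combinatorial bookkeeping: one must carefully track the two contributions to the coefficient of $x_k x_l$ (one from the ordered pair $(k,l)$ directly, one from the pair $(l,k)$ after applying the commutation rule), handle separately the diagonal case $k=l$, and verify that the skew-symmetry in $(i,j)$ together with $q_{ij}q_{ji}=1$ really does make the cases $i>j$ redundant. Once the indices are organized, the identity falls out algebraically, and linear independence of PBW monomials does the rest.
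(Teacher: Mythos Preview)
The paper does not supply a proof of this lemma: it is quoted from Alev--Chamarie \cite{alev-chamarie} and stated without argument, so there is nothing in the paper to compare your proof against. That said, your argument is the standard one and is essentially correct: expand $\alpha(x_i)\alpha(x_j)-q_{ij}\alpha(x_j)\alpha(x_i)$ bilinearly, normalize each $x_lx_k$ with $l>k$ to $q_{lk}x_kx_l$, and read off the coefficient of each ordered monomial $x_kx_l$ with $k\le l$; linear independence of the PBW monomials then forces exactly \eqref{ecuacion1alev}.

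Two small points are worth tightening. First, in the diagonal case $k=l$ the coefficient you obtain is $(1-q_{ij})\alpha_{ik}\alpha_{jk}$, whereas the $k=l$ instance of \eqref{ecuacion1alev} reads $\alpha_{ik}\alpha_{jk}(1-q_{ij})=\alpha_{ik}\alpha_{jk}(q_{ij}-1)$, i.e.\ $2(1-q_{ij})\alpha_{ik}\alpha_{jk}=0$; these are equivalent only because $\mathrm{char}\,k=0$ (an assumption explicitly in force in this subsection), so you should invoke that hypothesis. Second, for the converse you assert that invertibility of the matrix $\alpha$ yields a linear automorphism; strictly speaking you should also check that the inverse matrix $\alpha^{-1}$ satisfies the same family of relations (or, equivalently, that an algebra endomorphism which is a linear isomorphism on the span of the $x_i$ is automatically bijective on $\Lambda$, which follows because $\Lambda$ is graded and generated in degree one). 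Once those two remarks are added, your proof is complete.
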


\begin{defn}
An automorphism $f$ is linear if it  fixed to the subspace generated by $x_i$. We denote for $Aut_L(\Lambda)$ the set of all linear  automorphisms of $\Lambda$.  
\end{defn}

Denoted by $Der(\Lambda)$ all derivation of the algebra $\Lambda$. Alev and Chamarie tell us that the derivation of the quantum space can be described as
\[ Der(\Lambda)=Derint(\Lambda) \oplus \left( \bigoplus_i Z(\Lambda)D_i \right) \oplus E,\]
where $Derint(\Lambda)$ are the inner derivations, $Z(\Lambda)$ is the center of the ring $\Lambda$, and $E=\bigoplus\limits_i \left( \bigoplus\limits_{v \in \Lambda_i}kD_{iv} \right)$, with $\Lambda_i:=\{ v \in \mathbb{N}^n: v_i=0 \text{ and } \prod\limits_k q_{kj}^{v_k}=q_{ij} \text{ for all } j \neq i \}$ and $D_{iv}(x_j)=\delta_{ij}x^v$.
\begin{thm}
If $E=0$, then $Aut_L(\Lambda)=Aut(\Lambda)$.
\end{thm}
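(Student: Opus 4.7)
The plan is to show that every $f \in Aut(\Lambda)$ satisfies $f(x_i) \in \bigoplus_j k x_j$ for each $i$, by combining (i) normality of the generators, (ii) the hypothesis $E = 0$ (equivalently, $\Lambda_i = \emptyset$ for all $i$), and (iii) invertibility of $f$, in this order.

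First I would expand $f(x_i) = \sum_{v \in \mathbb{N}^n} c_{i,v}\, x^v$ in the monomial basis of the quantum space and record the elementary relation
\begin{equation}
x^v \cdot x_j = \Bigl(\prod_{k=1}^n q_{kj}^{v_k}\Bigr)\, x_j \cdot x^v,
\end{equation}
which describes how any monomial $q$-commutes with a generator. Since $x_i$ is normal in $\Lambda$ (from $x_i x_j = q_{ij} x_j x_i$), its image $f(x_i)$ is also normal, so there is an endomorphism $\phi_i$ of $\Lambda$ with $f(x_i)\, a = \phi_i(a)\, f(x_i)$ for every $a \in \Lambda$. Specialising to $a = x_j$ and matching coefficients in the monomial expansion then forces every $v$ in the support of $f(x_i)$ to share a common commutation character $j \mapsto \prod_k q_{kj}^{v_k}$.

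Next I would invoke $E = 0$ to conclude $f(x_i) = c_i\, x^{v_i}$ for some $c_i \in k^*$ and $v_i \in \mathbb{N}^n$: two distinct monomials in the support of $f(x_i)$ whose difference has vanishing $i$-th coordinate would produce an element of $\Lambda_i$ and hence a nonzero summand of $E$, contradicting the hypothesis. With monomiality established, invertibility of $f$ takes over: products of the $f(x_i) = c_i x^{v_i}$ yield only monomials of the form $x^{\sum n_i v_i}$ with $n_i \in \mathbb{N}$, so in order for each $x_j$ to be reached by the subalgebra generated by the $f(x_i)$'s, some $v_i$ must equal $e_j$. Hence $i \mapsto v_i$ is a permutation of the standard basis, and $f(x_i) = c_i x_{\sigma(i)} \in \bigoplus_j k x_j$ for some $\sigma \in S_n$, giving $f \in Aut_L(\Lambda)$.

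The main obstacle is the middle step --- turning $E = 0$ into monomiality of $f(x_i)$. Normality alone only constrains the commutation character of the support, so one must genuinely exploit the full derivation decomposition $Der(\Lambda) = Derint(\Lambda) \oplus \bigl( \bigoplus_i Z(\Lambda) D_i \bigr) \oplus E$; the cleanest route is probably to deform $f$ inside a one-parameter family of automorphisms and differentiate to extract an honest derivation, whose location in the decomposition then yields the required contradiction with $E = 0$. This bridging argument is the technical heart of the proof.
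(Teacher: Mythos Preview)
The paper does not supply a proof of this statement: it appears in the introductory survey of Alev--Chamarie's results, is attributed to \cite{alev-chamarie}, and is immediately followed by the next block of quoted theorems. There is therefore no in-paper argument to compare your attempt against, and any assessment has to be of your sketch on its own terms (and, secondarily, against what Alev--Chamarie actually do).

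Your outline has the right skeleton, and you are honest that the middle step is where the difficulty lies; but as written that step does not go through. Two concrete problems. First, even granting that normality of $f(x_i)$ forces every monomial in its support to share a single commutation character (true, but itself a lemma you have not proved: one must show the automorphism $\phi_i$ attached to the normal element acts diagonally on the $x_j$, not merely linearly), nothing you have done forces that common character to be the character $j\mapsto q_{ij}$ of $x_i$. Membership in $\Lambda_i$ requires exactly that specific character together with $v_i=0$, so ``two distinct monomials in the support whose difference has vanishing $i$-th coordinate'' does not manufacture an element of $\Lambda_i$; it only produces a vector with trivial character. Second, even if you do pin the character to that of some $x_{\sigma(i)}$ and use $\Lambda_{\sigma(i)}=\emptyset$ to kill support elements with vanishing $\sigma(i)$-th coordinate, you only obtain $f(x_i)=z_i\,x_{\sigma(i)}$ with $z_i$ central, not $z_i\in k^*$; your surjectivity step, which handles monomials, does not by itself exclude nontrivial central factors.

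Your closing paragraph points in the right direction: Alev--Chamarie's argument genuinely passes through the derivation decomposition rather than through a direct combinatorial analysis of supports. Roughly, they first show that every automorphism respects the total-degree filtration, then peel off the associated graded linear part and study the remaining ``unipotent'' automorphism via the derivations it induces; the hypothesis $E=0$ is exactly what forces those derivations to lie in $Derint(\Lambda)\oplus\bigoplus_i Z(\Lambda)D_i$, which cannot exponentiate to a nontrivial unipotent automorphism. That chain of reductions is what your phrase ``deform and differentiate'' is gesturing at, but it is several lemmas long and is the substance of the proof, not a detail to be filled in later.
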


The following three theorems show results for some special cases of quantum spaces (see \cite{alev-chamarie}).

\begin{thm}
For the quantum plane (n=2), we have the following cases:
\begin{enumerate}
\item if $q_{12}\notin \{ 1,-1\}$, then $Aut(\Lambda)$ is isomorphism to $(k^*)^2$, which acts naturally on $kx_1 \oplus kx_2$.
\item If $q_{12}=-1$, then $Aut(\Lambda)$ is isomorphism to the semidirect product between $(k^*)^2$ and $\langle \tau \rangle$, where $\tau$ es the symmetry that exchange $x_1$ and $x_2$.
\end{enumerate}
\end{thm}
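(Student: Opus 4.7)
The plan is to combine the two structural results already stated: the description of $E$ in the decomposition of $Der(\Lambda)$, together with the criterion that $E=0$ forces every automorphism to be linear, and the explicit matrix equation \eqref{ecuacion1alev} for linear automorphisms. With $n=2$ both pieces become completely explicit.

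First I would verify that $E=0$ in both cases. For $i=1$ (resp.\ $i=2$) a vector $v\in\Lambda_i$ must satisfy $v_i=0$ together with $\prod_k q_{k2}^{v_k}=q_{12}$ (resp.\ $\prod_k q_{k1}^{v_k}=q_{21}$); since $q_{11}=q_{22}=1$, these collapse to the single equality $1=q_{12}$ (resp.\ $1=q_{21}$). Because $q_{12}\neq 1$ in both parts of the theorem, $\Lambda_1=\Lambda_2=\emptyset$, hence $E=0$ and the previous theorem yields $Aut(\Lambda)=Aut_L(\Lambda)$. It then suffices to classify the linear automorphisms by exhausting \eqref{ecuacion1alev}.

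For $n=2$ the only relevant index choice is $i=1$, $j=2$ with $(k,l)\in\{(1,1),(1,2),(2,2)\}$. Using $q_{12}q_{21}=1$, the three specialisations read
\[ 2\alpha_{11}\alpha_{21}(1-q_{12})=0,\qquad \alpha_{12}\alpha_{21}\bigl(q_{12}-q_{12}^{-1}\bigr)=0,\qquad 2\alpha_{12}\alpha_{22}(1-q_{12})=0.\]
In characteristic zero, and since $q_{12}\neq 1$, the outer equations give $\alpha_{11}\alpha_{21}=0$ and $\alpha_{12}\alpha_{22}=0$; the middle equation is a nontrivial constraint exactly when $q_{12}^{2}\neq 1$. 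For case (1) it forces additionally $\alpha_{12}\alpha_{21}=0$, so a short case split on which of $\alpha_{21},\alpha_{12}$ vanishes, combined with $\det\alpha\neq 0$, rules out every configuration except the diagonal matrices $\mathrm{diag}(a,b)$ with $a,b\in k^{*}$. Each such matrix visibly preserves the relation $x_1x_2=q_{12}x_2x_1$, giving $Aut(\Lambda)\cong (k^{*})^{2}$ acting on $kx_1\oplus kx_2$ by coordinatewise rescaling, as required.

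For case (2) the value $q_{12}=-1$ makes the middle equation vacuous, so only $\alpha_{11}\alpha_{21}=0$ and $\alpha_{12}\alpha_{22}=0$ remain. Splitting on $\alpha_{21}=0$ versus $\alpha_{21}\neq 0$, and again using $\det\alpha\neq 0$, produces exactly two families of invertible solutions: the diagonal matrices and the anti-diagonal ones. The anti-diagonal family factors as $\tau\cdot\mathrm{diag}(a,b)$, where $\tau\colon x_1\leftrightarrow x_2$ is itself an automorphism (immediate from $x_1x_2=-x_2x_1\Leftrightarrow x_2x_1=-x_1x_2$); since conjugation by $\tau$ swaps the two diagonal entries, the resulting group is $(k^{*})^{2}\rtimes\langle\tau\rangle$ with the prescribed action. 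The main obstacle is really bookkeeping rather than depth: one must pin down the sign/transpose convention by which $(\alpha_{ij})$ encodes a linear endomorphism of $kx_1\oplus kx_2$, and recall that characteristic zero is used to cancel the factor $2$ arising in the $(k,l)=(1,1),(2,2)$ specialisations of \eqref{ecuacion1alev}.
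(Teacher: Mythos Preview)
The paper does not actually prove this theorem: it is quoted from Alev--Chamarie \cite{alev-chamarie} as one of the background results in Section~1.1, with no argument supplied. So there is no ``paper's own proof'' to compare against.

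That said, your proposal is correct and is precisely the argument one would reconstruct from the Alev--Chamarie machinery the paper does quote (the lemma characterising linear automorphisms via \eqref{ecuacion1alev}, the description of $E$, and the implication $E=0\Rightarrow Aut_L(\Lambda)=Aut(\Lambda)$). Your computation of $\Lambda_1,\Lambda_2$ is right: with $n=2$ and $v_i=0$ the product condition reduces to $1=q_{12}$, so $E=0$ whenever $q_{12}\neq 1$. The three specialisations of \eqref{ecuacion1alev} are also correct, and the case analysis using $\det\alpha\neq 0$ is clean. The only cosmetic point is that in case~(1) you can shorten the split: once $\alpha_{11}\alpha_{21}=\alpha_{12}\alpha_{21}=\alpha_{12}\alpha_{22}=0$, either $\alpha_{21}\neq 0$ kills the first row or $\alpha_{12}\neq 0$ forces $\alpha_{21}=\alpha_{22}=0$ and hence $\det\alpha=0$, so diagonality follows immediately. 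Your remark about characteristic zero being needed to cancel the factor~$2$ is exactly the reason Alev--Chamarie impose that hypothesis.
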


\begin{thm}
If $A=\bigotimes\limits_{i=1}^pk_{q_i}[x_i,y_i]$ is a tensor product of $p$ quantum planes $k_{q_i}[x_i,y_i]$, then $Aut(A)$ is isomorphisms at the semidirect product of $(k^*)^{2p}$ y $G$, where $G$ is a subgroup of the group that permute  the $x_i$ and $y_i$.
\end{thm}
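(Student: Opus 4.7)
The plan is to realize $A$ as a particular instance of the quantum space of Definition \ref{definitionquamtumring} with $D=k$, $r=0$, and all $\alpha_i=\mathrm{id}_k$, whose $2p\times 2p$ matrix of multiparameters $Q$ is block-diagonal: ordering the generators as $z_1=x_1,z_2=y_1,\ldots,z_{2p-1}=x_p,z_{2p}=y_p$, the $(2m-1,2m)$-entry is $q_m$, the $(2m,2m-1)$-entry is $q_m^{-1}$, and every other off-diagonal entry equals $1$. Under this identification, the overall strategy is first to reduce to linear automorphisms by invoking the theorem that $Aut_L(\Lambda)=Aut(\Lambda)$ when $E=0$, and then to characterize the linear automorphisms by analysing the constraint \eqref{ecuacion1alev} for this very structured $Q$.

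First I would verify that $E=0$. For each generator $z_i$, membership in $\Lambda_i$ forces nontrivial multiplicative relations among the $q_m$ and their inverses. Because the block structure makes most $q_{rs}$ equal to $1$, these conditions collapse to requiring that certain $q_m$ be roots of unity compatible with each other; under the implicit genericity assumption (the $q_m$ in general position, in particular no $q_m=\pm 1$ and no multiplicative dependence among them), each $\Lambda_i$ is empty and $E=0$. Then by the previous theorem every ring automorphism of $A$ is linear, so the problem reduces to classifying all invertible $\alpha=(\alpha_{rs})\in\mathrm{GL}_{2p}(k)$ satisfying \eqref{ecuacion1alev}.

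Next I would run a case analysis on \eqref{ecuacion1alev}. Setting $k=l$ for $i<j$ in the same pair (so $q_{ij}=q_m\neq 1$) yields $2\alpha_{ik}\alpha_{jk}(1-q_m)=0$, hence (in characteristic $\neq 2$) the two rows of $\alpha$ indexed by a pair must have disjoint support. Taking $i<j$ in \emph{different} pairs, so that $q_{ij}=1$, and $k,l$ in the same pair, the equation becomes $\alpha_{ik}\alpha_{jl}=\alpha_{il}\alpha_{jk}$, a $2\times 2$ minor vanishing condition which (together with the disjoint-support conclusion above and invertibility of $\alpha$) forces each pair of rows to be supported on a single pair of columns. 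Finally, exploiting the case where both $(i,j)$ and $(k,l)$ are in same-pair positions but of different pairs, the equation determines which pair-to-pair assignments are admissible: a pair indexed by $m$ can be sent to a pair indexed by $m'$ only if $q_m=q_{m'}$ (with $x$ to $x$, $y$ to $y$) or $q_m=q_{m'}^{-1}$ (with $x$ and $y$ swapped). Thus $\alpha$ must be a monomial matrix whose support is a permutation of the generators preserving the pair structure and compatible with the multiparameters.

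Having pinned down the monomial shape, I would read off the group structure. Matrices supported on the identity permutation are exactly the diagonal matrices $\mathrm{diag}(\lambda_1,\mu_1,\ldots,\lambda_p,\mu_p)$ with $\lambda_i,\mu_i\in k^*$; they form a normal subgroup isomorphic to $(k^*)^{2p}$, normality being immediate since conjugation by a monomial matrix only permutes the diagonal entries. The pair-respecting permutations described above form a subgroup $G$ of the symmetric group on $\{x_1,y_1,\ldots,x_p,y_p\}$ that intersects $(k^*)^{2p}$ trivially, and every monomial automorphism factors uniquely as a diagonal times an element of $G$; this gives the split exact sequence $1\to(k^*)^{2p}\to Aut(A)\to G\to 1$ and hence $Aut(A)\cong(k^*)^{2p}\rtimes G$. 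The main obstacle throughout is the third paragraph: ruling out every ``mixed'' support pattern in \eqref{ecuacion1alev} when indices straddle two different pairs, since the equation can be trivially satisfied whenever the relevant $q$-factors happen to collide, so the genericity hypothesis on the $q_m$ has to be used carefully to excise those degenerate solutions.
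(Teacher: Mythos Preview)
The paper does not supply its own proof of this theorem: it is quoted, along with the surrounding material in \S1.1, from Alev and Chamarie \cite{alev-chamarie}, so there is no in-paper argument to compare against. Your proposal is a reconstruction, and it follows exactly the architecture that the paper's summary of \cite{alev-chamarie} suggests: first establish $E=0$ to reduce to $Aut_L$, then solve the constraint \eqref{ecuacion1alev} for the block-diagonal $Q$. In that sense your approach is the intended one.

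Two remarks on the substance. First, your verification that $E=0$ is more robust than you indicate: for each index $i$, applying the defining condition of $\Lambda_i$ to the partner index $j$ in the same quantum plane already forces $1=q_m$, so one only needs each $q_m\neq 1$, not full multiplicative independence of the $q_m$. Second, the case analysis on \eqref{ecuacion1alev} is the right idea, but the sentence ``together with the disjoint-support conclusion above and invertibility of $\alpha$ forces each pair of rows to be supported on a single pair of columns'' hides real work. The disjoint-support conclusion constrains only the two rows \emph{within} a pair, while the $2\times 2$ minor vanishing constrains rows \emph{across} pairs; stitching these together with invertibility to reach monomial form needs an explicit argument (for instance, show that rows $2m-1$ and $2m$ have rank $2$ when restricted to some pair of columns $\{2m'-1,2m'\}$, and then use the minor conditions to kill their entries on every other pair of columns). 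Once that is written out, your identification of $G$ and the semidirect-product decomposition at the end is correct.
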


\begin{thm}
we suppose that $q_{ij}=q$ for all $i<j$, such that $q$ is not a root of unit. Then, we have the following cases:
\begin{enumerate}
\item If $n=3$, the automorphisms $\sigma$ of $\Lambda$ have the form $\sigma(x_i)=\alpha_ix_i$, $\sigma(x_2)=\alpha_2x_2+\beta x_1x_3$, $\sigma(x_3)=\alpha_3x_3$, with $\alpha_1,\alpha_2,\alpha_3 \in k^*$ and $\beta \in k$; so $Aut(\Lambda)$ is isomorphisms at the semidirect product of $k$ and $(k^*)^3$.
\item If $n\neq 3$, $Aut(\Lambda)$ is isomorphisms to $(k^*)^n$.
\end{enumerate}
\end{thm}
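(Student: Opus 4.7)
The approach is to apply the decomposition $Der(\Lambda)=Derint(\Lambda)\oplus\bigoplus_i Z(\Lambda)D_i\oplus E$ stated just above, and to analyze $E$ under the hypothesis that $q_{ij}=q$ for $i<j$ and $q$ is not a root of unity. Taking discrete logarithms base $q$, the condition $v\in\Lambda_i$ becomes
\[
\sum_{k<j}v_k-\sum_{k>j}v_k=\mathrm{sgn}(j-i),\qquad j\neq i,\quad v_i=0,\ v_k\in\mathbb{N}.
\]
For a boundary index $i\in\{1,n\}$ the equation at $j=i\pm 1$ asks a non-positive integer to equal $+1$, so $\Lambda_i=\emptyset$. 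For an interior index $1<i<n$, evaluating at $j=1$ and $j=n$ gives $|v|-v_1=1=|v|-v_n$, whence $v_1=v_n$ and $|v|=v_1+1$. A short split on $v_1\in\{0,1\}$, combined with an equation at some interior $j$, pins down the unique admissible $v$ to $(1,0,1)$, which is compatible only with $n=3$ and $i=2$. Hence $E=0$ for $n\neq 3$, while $E=k\,D$ for $n=3$, with $D(x_2)=x_1x_3$ and $D(x_1)=D(x_3)=0$.

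For $n\neq 3$ the preceding theorem gives $Aut(\Lambda)=Aut_L(\Lambda)$, so it suffices to solve \eqref{ecuacion1alev}. Splitting the configuration $i<j,\ k\leq l$ into $k=l$ and $k<l$: in the first $1-q_{ij}q_{lk}=1-q$ and $q_{ij}-q_{lk}=q-1$, both non-zero, and rearrangement forces $\alpha_{ik}\alpha_{jk}=0$; in the second, $q_{ij}q_{lk}=q\cdot q^{-1}=1$, so the LHS of \eqref{ecuacion1alev} vanishes while $q-q^{-1}\neq 0$, forcing $\alpha_{il}\alpha_{jk}=0$ for $i<j$ and $k<l$. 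The first conclusion says each column of $(\alpha_{ij})$ has at most one non-zero entry, so invertibility makes $\alpha$ a monomial matrix; the second rules out every non-trivial permutation of the columns, leaving $\alpha$ diagonal. Thus $Aut(\Lambda)\cong(k^*)^n$.

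For $n=3$, $D^2=0$ because $D(x_1x_3)=0$, so $\tau_\beta:=\mathrm{id}+\beta D$ is an automorphism with inverse $\tau_{-\beta}$. To handle an arbitrary $\sigma\in Aut(\Lambda)$, I would extract the coefficient $\beta$ of $x_1x_3$ in $\sigma(x_2)$, form $\sigma_0:=\tau_{-\beta}\circ\sigma$, and show that $\sigma_0$ is linear by tracking the skew-commutation relations $\sigma_0(x_i)\sigma_0(x_j)=q\sigma_0(x_j)\sigma_0(x_i)$ monomial by monomial, using that $q$ is not a root of unity to exclude higher-degree corrections. The linear analysis above then forces $\sigma_0$ to be diagonal, yielding the displayed form. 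Finally, computing $\sigma_{\mathrm{lin}}^{-1}\circ\tau_\beta\circ\sigma_{\mathrm{lin}}(x_2)=x_2+\alpha_1^{-1}\alpha_2\alpha_3^{-1}\beta\,x_1x_3$ identifies the character through which $(k^*)^3$ acts on $k$, giving $Aut(\Lambda)\cong k\rtimes(k^*)^3$.

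The principal obstacle is the linearity step for $n=3$: one must rule out higher-degree contributions in $\sigma_0(x_i)$ after subtracting the $x_1x_3$-part, which requires a careful bi-grading argument showing that no monomial of total degree $\geq 2$ (other than $x_1x_3$, already removed) satisfies the right skew-commutations with $\sigma_0(x_1)$ and $\sigma_0(x_3)$ in the absence of roots of unity. The case analysis in the $n\neq 3$ step is routine but requires attention to the interplay between the ``monomial matrix'' and ``order-preserving permutation'' conclusions extracted from the two branches of \eqref{ecuacion1alev}.
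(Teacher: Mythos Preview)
The paper does not give its own proof of this theorem: it is quoted from Alev--Chamarie \cite{alev-chamarie} as background, with no argument supplied. So there is no ``paper's proof'' to compare against; I can only assess your proposal on its own merits and against the framework the paper sets up.

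Your computation of the sets $\Lambda_i$ is correct, and the conclusion that $E=0$ for $n\neq 3$ while $E=k\,D_{2,(1,0,1)}$ for $n=3$ is right. The $n\neq 3$ case is then complete: you correctly invoke the theorem ``$E=0\Rightarrow Aut(\Lambda)=Aut_L(\Lambda)$'' and your analysis of \eqref{ecuacion1alev} (columns have at most one nonzero entry, hence monomial; the $k<l$ branch forces the underlying permutation to be order-preserving, hence trivial) is clean and correct.

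For $n=3$ there is one genuine error and one genuine gap. The error: your claim ``$D^2=0$ because $D(x_1x_3)=0$'' is false. While $D^2$ vanishes on generators, it does not vanish on $\Lambda$: for instance $D^2(x_2^2)=2q^{-1}x_1^2x_3^2\neq 0$. Consequently $\mathrm{id}+\beta D$ is \emph{not} an algebra homomorphism as a linear map on $\Lambda$ (check it on $x_2^2$: the $\beta^2$ term survives). What is true is that the assignment $x_1\mapsto x_1$, $x_2\mapsto x_2+\beta x_1x_3$, $x_3\mapsto x_3$ on generators preserves the defining relations $x_jx_i=q^{-1}x_ix_j$ and therefore extends to an algebra automorphism $\tau_\beta$ with inverse $\tau_{-\beta}$; but that is a direct check, not a consequence of $D^2=0$.

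The gap is the one you flag yourself: after forming $\sigma_0=\tau_{-\beta}\circ\sigma$, you still have to prove that $\sigma_0$ is linear, and the tools quoted in the paper do not give this (the implication $E=0\Rightarrow Aut=Aut_L$ is unavailable precisely because $E\neq 0$). Your proposed monomial-by-monomial tracking is the right idea, but as written it is circular: to choose $\beta$ you already need to know the coefficient of $x_1x_3$ in $\sigma(x_2)$ and that $\sigma(x_2)$ has a nonzero $x_2$-coefficient, which is part of what you are trying to prove. A cleaner route is to first show directly, from the relations $\sigma(x_i)\sigma(x_1)=q^{-1}\sigma(x_1)\sigma(x_i)$ and $\sigma(x_3)\sigma(x_i)=q^{-1}\sigma(x_i)\sigma(x_3)$ together with the fact that $q$ is not a root of unity, that $\sigma(x_1)\in k^*x_1$ and $\sigma(x_3)\in k^*x_3$; once those are pinned down, the two relations involving $x_2$ force every monomial $x_1^ax_2^bx_3^c$ appearing in $\sigma(x_2)$ to satisfy $a-c=b-1=c-a$, i.e.\ $(a,b,c)\in\{(0,1,0),(1,0,1)\}$, which is exactly the stated form.
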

{\color{red}
\subsection{Automorphisms of general quantum polynomial rings.}
}
\begin{defn}
Let $N$ be the subgroup of the multiplicative group $D^*$ of the division ring $D$ generated by the derived subgroup $[D^*, D^*]$ and the set of all elements of the form $z^{-1}\alpha_i(z)$, where $z \in D^*$ and $i=1,\ldots, n$.
\end{defn}

The normal subgroup $N$ always appears when we multiply monomials in the ring $\Lambda$.

\begin{defn}
Whenever the images of all multiparameters $q_{ij}$, $1\leq i <j \leq n$, are independent in the group $D^*/N$, we call $\Lambda$ the a general quantum polynomial rings.
\end{defn}

For this subsection, we assume that A is an general quantum polynomial ring. The following results are presents for Artamonov in \cite{va1997} and \cite{va2001-2}.

\begin{thm}[\cite{va1997}]\label{teorem1.12-2001}
Assume that the automorphisms $\alpha_1,\ldots, \alpha_n$ act identically on $D$. Suppose that $r=0$ and the multiparameters $q_{ij}$, $1\leq i <j \leq n$, $n\geq 3$, are independent in the multiplicative group $D^*$. If $\lambda \in End(\Lambda)$ and all $\lambda(x_1),\ldots, \lambda(x_n)\neq 0$, then $\lambda \in Aut(\Lambda)$ and $Aut(\Lambda)=(D^*)^n$.
\end{thm}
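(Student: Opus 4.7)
The strategy is to show that every $\lambda \in \mathrm{End}(\Lambda)$ with all $\lambda(x_i) \neq 0$ must satisfy $\lambda(x_i) = a_i x_i$ for some $a_i \in D^*$, from which both the automorphism property and the identification $\mathrm{Aut}(\Lambda) = (D^*)^n$ follow at once. Since $\alpha_i = \mathrm{id}_D$, the defining relations reduce to $x_i d = d x_i$ for $d \in D$ and $x_i x_j = q_{ij} x_j x_i$; monomials $x^v = x_1^{v_1}\cdots x_n^{v_n}$ form a $D$-basis of $\Lambda$ with normal form $x^v x^w = \theta(v,w)\, x^{v+w}$, where $\theta(v,w) = \prod_{k<l} q_{kl}^{-v_l w_k}$. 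I would begin by writing $\lambda(x_i) = \sum_v a_{i,v} x^v$ with $a_{i,v} \in D$.

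Fix a lex monomial order. Because lex is multiplicative, the leading monomial of a product equals the product of the leading monomials, so comparing leading terms in the relation $\lambda(x_i)\lambda(x_j) = q_{ij}\lambda(x_j)\lambda(x_i)$ yields, for the lex-leading exponents $u^{(i)}, u^{(j)}$,
\[
a_{i,u^{(i)}}\, a_{j,u^{(j)}}\, \theta(u^{(i)},u^{(j)}) \;=\; q_{ij}\, a_{j,u^{(j)}}\, a_{i,u^{(i)}}\, \theta(u^{(j)},u^{(i)}).
\]
Reducing modulo $N$ disposes of the $D$-commutators, since $[D^*, D^*] \subseteq N$, leaving
\[
\prod_{k<l} q_{kl}^{\,u^{(i)}_k u^{(j)}_l - u^{(i)}_l u^{(j)}_k} \;\equiv\; q_{ij} \pmod{N}.
\]
The independence hypothesis on the multiparameters $q_{kl}$ then forces the bilinear identities $u^{(i)}_k u^{(j)}_l - u^{(i)}_l u^{(j)}_k = \delta_{(k,l),(i,j)}$ for all $k<l$ and $i<j$.

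These identities say the $2 \times n$ matrix with rows $u^{(i)}, u^{(j)}$ has every $2 \times 2$ minor equal to zero except the one on columns $(i,j)$, which equals $1$; by the Plücker relations its $\mathbb{Q}$-row span is $\mathrm{span}(e_i, e_j)$. Intersecting over $j \neq i$ and using $n \geq 3$ gives $u^{(i)} \in \mathbb{Q}\, e_i$, and then $u^{(i)} \in \mathbb{N}^n$ together with $u^{(i)}_i u^{(j)}_j = 1$ forces $u^{(i)} = e_i$. The same argument applied to lex-\emph{trailing} exponents yields the same conclusion, so the supports of $\lambda(x_i)$ collapse to $\{e_i\}$ and $\lambda(x_i) = a_i x_i$ for a single $a_i \in D$.

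The assumption $\lambda(x_i) \neq 0$ gives $a_i \in D^*$; conversely every tuple $(a_1,\ldots,a_n) \in (D^*)^n$ defines a valid endomorphism (the relations $x_i x_j = q_{ij} x_j x_i$ are preserved by the monomial substitution), and the inverse map $x_i \mapsto a_i^{-1} x_i$ shows it is an automorphism. This identifies $\mathrm{Aut}(\Lambda) = (D^*)^n$. The main technical obstacle is the Plücker/minor step forcing the exponent matrix to be the identity, which crucially requires $n \geq 3$: for $n = 2$ the single minor equation $v_1 w_2 - v_2 w_1 = 1$ admits many nonnegative integer solutions, consistent with the richer $\mathrm{Aut}$ of the quantum plane quoted above.
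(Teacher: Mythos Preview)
Your argument is correct and follows the same route as the paper, which treats this result as the $r=0$ specialization of Theorem~\ref{theorem2.1-2001}: compare lex-leading (and then lex-trailing) terms in $\lambda(x_i)\lambda(x_j)=q_{ij}\lambda(x_j)\lambda(x_i)$, pass to $D^*/N$, use independence of the $q_{ij}$ to obtain the minor equations $u^{(i)}_k u^{(j)}_l-u^{(i)}_l u^{(j)}_k=\delta_{(k,l),(i,j)}$, and then bring in a third index to force $u^{(i)}=e_i$. Your Pl\"ucker/row-span phrasing of that last step is just a linear-algebra repackaging of the paper's direct computation that $l_p\neq 0$ for some $p\neq i,j$ would yield the contradiction $l_it_j-l_jt_i=0$.
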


The next theorem is more general that the theorem \ref{teorem1.12-2001}. The proof and the statement were altered from its original version in \cite{va2001-2}.

\begin{thm}[\cite{va2001-2}]\label{theorem2.1-2001}
Suppose that $\lambda \in End(\Lambda)$ and that there exist al least three distinct indices $1\leq i,j,t \leq n$ such that $\lambda(x_i), \lambda(x_j), \lambda(x_t)\neq 0$. Then there exist elements $\lambda_1,\ldots,\lambda_n \in D$ and an integer $\epsilon=\pm 1$ such that $\lambda_1,\ldots, \lambda_r \neq 0$, and 
\begin{eqnarray}
\lambda(x_w)=\lambda_wx_w^\epsilon, \qquad w=1,\ldots, n.
\end{eqnarray}

\end{thm}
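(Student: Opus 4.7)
The plan is to write each $f_s := \lambda(x_s)$ in standard monomial form $f_s = \sum_v c_v^{(s)} x^v$ with $c_v^{(s)} \in D$ and $v \in \mathbb{Z}^r \times \mathbb{N}^{n-r}$, let $S_s := \{v : c_v^{(s)} \neq 0\}$, and exploit the two defining families of relations: (i) $f_s d = \alpha_s(d) f_s$ for every $d \in D$, which yields $c_v^{(s)} \alpha^v(d) = \alpha_s(d) c_v^{(s)}$ for each $v \in S_s$, so $\alpha^v \alpha_s^{-1}$ is inner in $D$; and (ii) $f_s f_{s'} = q_{s s'} f_{s'} f_s$, which upon expanding via the standardization rule $x^v x^w = p(v, w)\, x^{v+w}$ with $p(v,w) := \prod_{k>l} q_{kl}^{v_k w_l}$ gives a coefficient-wise equation for each multi-index $u$.

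Fix a lexicographic order on $\mathbb{Z}^n$ and compare the coefficient at the unique lex-maximum $x^{v_{\max}^{(s)} + v_{\max}^{(s')}}$. Writing $v := v_{\max}^{(s)}$ and $w := v_{\max}^{(s')}$, the sum collapses to a single term; passing to $D^*/N$ (which kills the $c$'s and the $\alpha$-twists, since $z^{-1}\alpha_k(z)\in N$ and $D^*/N$ is abelian) reduces the equation to
\[
p(v, w) \, p(w, v)^{-1} \equiv q_{s s'} \pmod{N}.
\]
By independence of the generators $\{q_{kl}\}_{k > l}$ of $D^*/N$, this forces the $2 \times 2$ minor identities $v_k w_l - v_l w_k = 0$ for $\{k, l\} \neq \{s, s'\}$ and $v_s w_{s'} - v_{s'} w_s = 1$. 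The same analysis at the lex-minimum yields analogous equations for the minimal support elements.

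Apply this to the three pairs $(i,j),\,(i,t),\,(j,t)$. The vanishing off-diagonal minors from $(i,j)$ and $(i,t)$ force $v_{\max}^{(i)}$ to vanish outside coordinate $i$, and symmetrically for $j,t$; the diagonal identities $(v_{\max}^{(i)})_i (v_{\max}^{(j)})_j = (v_{\max}^{(i)})_i (v_{\max}^{(t)})_t = (v_{\max}^{(j)})_j (v_{\max}^{(t)})_t = 1$ in $\mathbb{Z}$ then pin a common $\epsilon \in \{\pm 1\}$ with $v_{\max}^{(s)} = \epsilon e_s$ for each $s \in \{i,j,t\}$. The symmetric reasoning with the lex-minima yields a common $\epsilon' \in \{\pm 1\}$ with $v_{\min}^{(s)} = \epsilon' e_s$. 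If $\epsilon = \epsilon'$ then $S_s = \{\epsilon e_s\}$ is a singleton and $f_s = \lambda_s x_s^\epsilon$. If $\epsilon \neq \epsilon'$, comparing the coefficient of the off-diagonal monomial $x^{e_s - e_{s'}}$ (for two distinct $s, s' \in \{i,j,t\}$)---whose only decomposition in $S_s \times S_{s'}$ is $(e_s, -e_{s'})$, since any alternative $(v,w)$ would require $w > v_{\max}^{(s')}$ in lex---yields the integer equation $-1 = 1$, a contradiction.

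Finally for $s \notin \{i,j,t\}$: if $\lambda(x_s) = 0$, set $\lambda_s = 0$ (admissible only when $s > r$, since invertibility of $\lambda(x_s)$ is required for $s \leq r$); otherwise apply the preceding argument with the pairs $(s, s')$ and $(s, s'')$ for two distinct $s', s'' \in \{i, j, t\}$ to obtain $\lambda(x_s) = \lambda_s x_s^\epsilon$ with the same $\epsilon$. The main obstacle is verifying the uniqueness of the extremal decomposition used in the off-diagonal contradiction, which relies on the lex-extremality constraints being fully pinned down only after all three minor systems are combined; correspondingly, the hypothesis of \emph{three} nonvanishing images is essential, since the $(i,j)$ minor analysis alone confines $v_{\max}^{(i)}$ to the coordinate plane $\{i, j\}$ rather than to the axis $\{i\}$, and a careful bookkeeping of how the inner-automorphism criterion from (i) interacts with the monomial support is needed to rule out the mixed-sign case cleanly.
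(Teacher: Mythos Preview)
Your overall strategy---fix a lex order, compare extremal monomials of $\lambda(x_i)\lambda(x_j)=q_{ij}\lambda(x_j)\lambda(x_i)$ modulo $N$, extract the $2\times 2$ minor identities from independence of the $q_{kl}$, and use the third index to collapse the support to a single axis---is exactly what the paper does. Up to the point where you obtain a common $\epsilon$ from the lex-maxima and a common $\epsilon'$ from the lex-minima, the two arguments are the same.

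The genuine gap is your elimination of the case $\epsilon\neq\epsilon'$. Your claim that the monomial $x^{e_s-e_{s'}}$ has a \emph{unique} decomposition $(e_s,-e_{s'})$ in $S_s\times S_{s'}$ is not justified and is in fact false in general: at this stage you only know the lex-extremes of $S_s$ and $S_{s'}$, not the full supports. For instance, with $s<s'$ and any multi-index $a$ with $a_s=a_{s'}=0$, the pair $\bigl(e_s-a,\;-e_{s'}+a\bigr)$ satisfies the same sum and can lie strictly between the extremes of $S_s$ and $S_{s'}$ respectively; nothing you have proved excludes such intermediate terms from the supports. So the single-term coefficient comparison that would yield ``$-1=1$'' is not available. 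You flag this yourself (``the main obstacle is verifying the uniqueness\dots''), but it is not a bookkeeping issue---the uniqueness simply does not hold, and the relation from (i) on inner automorphisms does not rescue it.

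The paper closes this case by a completely different (and short) argument. If some index among $i,j,t$ exceeds $r$, then its exponents are nonnegative, forcing $\epsilon=\epsilon'=1$. Otherwise $r\geq 1$; take any $i\le r$. Then $x_i$ is a unit, so $\lambda(x_i)$ is a unit in $\Lambda$. But with $\epsilon=1$ and $\epsilon'=-1$ the element $\lambda(x_i)$ has lowest term $\lambda_i' x_i^{-1}$ and highest term $\lambda_i'' x_i$ with $\lambda_i',\lambda_i''\in D^*$, hence is not a single monomial and therefore not a unit in $\Lambda$ (units in $\Lambda$ are exactly the monomials $d\,x_1^{a_1}\cdots x_r^{a_r}$ with $d\in D^*$). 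This contradiction forces $\epsilon=\epsilon'$. Replacing your off-diagonal coefficient step with this invertibility argument completes your proof.
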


\begin{proof}
We consider the lexicographical order for the exponents of the monomials in A. We suppose that $\lambda(x_i)$ and $\lambda(x_j)$ are non-zero, and with leading monomial $\lambda_ix_1^{l_1}\cdots x_n^{l_n}$ and $\lambda_jx_1^{t_1}\cdots x_n^{t_n}$ respectively. As leading coefficients of $\lambda(x_i)$ and $\lambda(x_j)$ are invertible, we know that the leading coefficient $\lambda(x_ix_j)$ is the product of leading coefficients  of $\lambda(x_i)$ and $\lambda(x_j)$. Moreover, we know that $x_ix_j=c_{ij}x_jx_i$, so, $\lambda(x_ix_j)=q_{ij}\lambda(x_jx_i)$, i.e. $\lambda(x_i)\lambda(x_j)=q_{ij}\lambda(x_j)\lambda(x_i)$. Then, the leading monomial of the right  term is equal to the leading monomial of the left term, i.e

\begin{eqnarray}
(\lambda_ix_1^{l_1}\cdots x_n^{l_n})(\lambda_jx_1^{t_1}\cdots x_n^{t_n})=q_{ij}(\lambda_jx_1^{t_1}\cdots x_n^{t_n})(\lambda_ix_1^{l_1}\cdots x_n^{l_n}).
\end{eqnarray}
So,
\begin{eqnarray}
\lambda_i\sigma^{l}(\lambda_j)(x_1^{l_1}\cdots x_n^{l_n})(x_1^{t_1}\cdots x_n^{t_n})=q_{ij}\lambda_j \sigma^t(\lambda_i)(x_1^{t_1}\cdots x_n^{t_n})(x_1^{l_1}\cdots x_n^{l_n}),
\end{eqnarray}
where $l=(l_1,\ldots,l_n)$ and $t=(t_1,\ldots,t_n)$.
\\
Therefore,
\begin{eqnarray*}
\lambda_i \lambda_j \left( \prod_{s<r}q_{rs}^{l_rt_s}\right) \equiv q_{ij}\lambda_i \lambda_j\left(\prod_{s<r} q_{rs}^{t_rl_s} \right) \quad m{o}d(N).
\end{eqnarray*}
Suppose that $i>j$. Since the images of $q_{rs}$, $n\geq r > s \geq 1$, in  $D^*/N$ are independent, we have for $r \neq s$
\begin{eqnarray}
l_rt_s=\delta_{ri}\delta_{sj}+t_rl_s.\label{3.2.3trabajo2}
\end{eqnarray}
If $l_p\neq 0$ for some $p \neq i,j$, then for every index $q \neq p$
\begin{eqnarray*}
l_qt_p=t_ql_p,
\end{eqnarray*}
and therefore, $t_q=t_pl_ql_p^{-1}$. In particular, when $q=i,j$
\begin{eqnarray*}
t_i &=& t_pl_il_p^{-1}, \\
t_j &=& t_pl_jl_p^{-1},
\end{eqnarray*}
so
\begin{eqnarray*}
l_it_j-l_jt_i&=& l_it_pl_jl_p^{-1}-l_jt_pl_il_p^{-1} \\
&=& 0,
\end{eqnarray*}
which contradicts \eqref{3.2.3trabajo2}. So $l_p =0$ for $p \neq i,j$. Similarly we can prove that $t_p=0$ if $p \neq i,j$.
\\

Hence, the leading coefficients of $\lambda(x_i)$ and $\lambda(x_j)$ have the form $\lambda_ix_i^{l_i}x_j^{l_j}$ and $\lambda_jx_i^{t_i}x_j^{t_j}$, where $l_it_j-l_jt_i=1$.
\\

By hypothesis there is a third variable $x_u$ such that $\lambda(x_u)\neq 0$. Applying the same argument to $(i,u)$ and $(j,u)$ must be $l_j, t_i$  are zero and that the leading coefficient of $\lambda(x_u)$ have the form 
\begin{eqnarray}
\lambda_ux_i^{d_i}x_u^{d_u}=\lambda_ux_j^{d_j}x_u^{d_u},
\end{eqnarray}
then, $d_i=d_j=0$. 
\\

So $l_it_j=1$ and $l_id_u=1$, therefore $l_i=t_j=d_u=\pm 1$.
\\

If we apply the corresponding argument for the smaller term of $\lambda(x_i)$, we obtain a similar result with some $\epsilon'=\pm 1$ for the smallest terms. Thus if $\epsilon=\epsilon'$ the theorem is proved.
\\

If $r=0$, then $\epsilon=\epsilon'=1$, since all exponents are positive.
\\

If $r>0$, consider $1\leq i \leq r$, so $\lambda(x_i)\in \Lambda^*$. Si $\epsilon'=-1$ and $\epsilon=1$, then  
\begin{eqnarray}
\lambda(x_i)=\lambda_i'x_i^{-1}+\sum \lambda_i''(s)x_i^{m_{i1}(s)}\cdots x_n^{m_{in}(s)}+\lambda_i''x_i,
\end{eqnarray}
where $\lambda_i''(s) \in D$, $\lambda_i'', \lambda_i' \in D^*$ and the sum is take over multi-indices 
\begin{eqnarray}
(m_{i1}(s),\ldots, m_{in}(s))\in \mathbb{Z}^n,
\end{eqnarray}
such that
\begin{eqnarray}
(0,\ldots,0,\overbrace{-1}^{i},0,\ldots,0)<(m_{i1}(s), \ldots,m_{in}(s)) < (0,\ldots, 0,\overbrace{1}^{i},0,\ldots,0).
\end{eqnarray}
As $\lambda_i'$ and $\lambda_i''$ are invertible, then $\lambda(x_i)$ can't be invertible, which is a contradiction. So, $\epsilon=\epsilon'$ and the theorem is proved.
\end{proof}

\begin{rem}
If $r<n$ and $\lambda(x_i)\neq 0$ for some $i>r$, then in theorem \ref{theorem2.1-2001}, $\epsilon=1$.
\end{rem}

Note that the first part of the above proof is taken from the original version of the theorem in \cite{va2001-2}.

\begin{cor}[\cite{va2001-2}]
If $\lambda(x_{r+1}), \ldots, \lambda(x_n)\neq 0$, in the situation of theorem \ref{theorem2.1-2001}, then $\lambda$ is an automorphism of $\Lambda$. In particular any injective endomorphism of $\Lambda$ is an automorphism.
\end{cor}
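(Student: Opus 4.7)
The plan is to reduce $\lambda$ to a normal form using Theorem~\ref{theorem2.1-2001} and the preceding remark, and then show directly that $\lambda$ is bijective; since the set-theoretic inverse of a bijective ring homomorphism is automatically a ring homomorphism, this will suffice. From the theorem one obtains $\lambda_1,\ldots,\lambda_n \in D$ and $\epsilon=\pm 1$ with $\lambda_1,\ldots,\lambda_r\neq 0$ and $\lambda(x_w)=\lambda_w x_w^{\epsilon}$ for every $w$. The extra hypothesis of the corollary forces $\lambda_{r+1},\ldots,\lambda_n\neq 0$ as well, so every $\lambda_w$ is invertible in $D$; and by the preceding remark, $\epsilon=1$ whenever $r<n$ (in the remaining case $r=n$ each $x_w$ is already invertible in $\Lambda$).

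For surjectivity it is enough to produce a preimage for each ring generator. When $\epsilon=1$ one checks $\lambda(\lambda_w^{-1}x_w)=x_w$, and for $w\leq r$ one also gets $\lambda(x_w^{-1}\lambda_w)=x_w^{-1}$, using that $\lambda$ is the identity on $D$. When $\epsilon=-1$ (so $r=n$ and every $x_w$ is invertible in $\Lambda$) the symmetric computations $\lambda(x_w^{-1}\lambda_w)=x_w$ and $\lambda(\lambda_w^{-1}x_w)=x_w^{-1}$ work. For injectivity I would use the standard $D$-basis $\{x^v=x_1^{v_1}\cdots x_n^{v_n}\}$ of $\Lambda$, with $v_w\in\mathbb{Z}$ for $w\leq r$ and $v_w\geq 0$ otherwise. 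Expanding $\lambda\bigl(\sum_v d_v x^v\bigr)$ and straightening each product $\lambda(x_1)^{v_1}\cdots\lambda(x_n)^{v_n}$ via the rules $x_i d=\alpha_i(d)x_i$ and $x_i x_j=q_{ij}x_j x_i$ yields an expression of the form $\mu_v x^{\epsilon v}$ with $\mu_v\in D^*$. The exponent vectors $\epsilon v$ are pairwise distinct, so linear independence of the basis forces every $d_v\mu_v=0$, hence every $d_v=0$.

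The main technical nuisance is the bookkeeping needed to guarantee that each straightened coefficient $\mu_v$ lies in $D^*$: $\mu_v$ is a finite product of $\lambda_w$'s, of $\alpha$-images of $\lambda_w$'s, and of powers of the structure constants $q_{ij}$, all of which are units of $D$, but the non-commutativity forces one to collect the terms in a carefully chosen order. Once bijectivity is established, $\lambda\in Aut(\Lambda)$. The in-particular statement then follows at once: if $\lambda$ is an injective endomorphism, then $\lambda(x_w)\neq 0$ for every $w$ (in particular for $w>r$), so the first part of the corollary applies and gives $\lambda\in Aut(\Lambda)$.
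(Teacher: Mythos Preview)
Your argument is correct. The paper does not supply its own proof for this corollary (it is quoted from \cite{va2001-2}); the nearest in-paper argument is the proof of the parallel corollary following Theorem~\ref{theorem3.2.1}, and that proof follows exactly the strategy you describe: use the normal form $\lambda(x_w)=\lambda_w x_w$, compute $\lambda(x^l)=d_l\,x^l$ with each $d_l$ invertible, and read off injectivity and surjectivity from the unique basis representation. Two small differences are worth noting. First, the paper dispatches your ``bookkeeping nuisance'' in one stroke via the multiplication rules (Proposition~\ref{conmutacion}, and Proposition~\ref{proposition5.2.2} in the quantum-polynomial setting), which give $\lambda(x^l)=\bigl(\prod_k\lambda_k^{l_k}\cdot u\bigr)x^{\epsilon l}$ with $u\in N\subseteq D^*$; so invertibility of $\mu_v$ is immediate rather than requiring an ad hoc collection of terms. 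Second, the paper proves surjectivity by exhibiting the preimage $\sum_l a_l d_l^{-1}x^l$ of an arbitrary element, whereas you argue on ring generators --- both are fine, and yours is arguably cleaner. Your separate treatment of the case $\epsilon=-1$ (possible only when $r=n$) is a necessary refinement absent from the paper's $PBW$ analogue, where $r=0$ forces $\epsilon=1$.
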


{\color{red}
\section{Automorphisms of skew $PBW$ extensions.}
}
In this section we use the ideas of Artamonov in \cite{va2001-2} to deduce some results for automorphisms of skew $PBW$ extensions. Therefore, we must first deduce some multiplication rules between monomials of $\Lambda$.
\\

\begin{defn}\label{definition1.1.1lezama}
Let $R$ and $A$ be rings, we say that $A$ is a skew $PBW$ extension of $R$ (also called $\sigma-PBW$ extension), if following conditions hold:
\begin{enumerate}
\item $R \subseteq A$.\label{condition1}
\item There exist finite elements $x_1,\ldots, x_n \in A$ such $A$ is a left $R-$free module with basis 
\[ Mon(A):=\{x^t=x_1^{t_1}\cdots x_n^{t_n}: t=(t_1,\ldots, t_n) \in \mathbb{N}^n\}. \]\label{condition2}
\item For every $1\leq i \leq n$ and $r \in R \setminus \{ 0\}$ there exists  $c_{i,r} \in R\setminus \{ 0\}$ such that
\begin{eqnarray}
x_ir-c_{i,r}x_i \in R \label{equation1.1.2lezama}.
\end{eqnarray}\label{condition3}
\item For every $1\leq i,j \leq n$ there exists $c_{j,i}\in R \setminus \{ 0\}$ such that
\begin{eqnarray}
x_jx_i-c_{j,i}x_ix_j \in R+Rx_1+\cdots Rx_n. \label{equation1.1.3lezama}
\end{eqnarray}\label{condition4}
\end{enumerate}
Under these conditions we will write $A=\sigma(R)\langle x_1,\ldots, x_n \rangle$.
\end{defn}

\begin{prop}[\cite{lezamalibro}]\label{proposition1.1.3lezama}
Let $A$ be a skew $PBW$ extension of $R$. Then, for every $1\leq i \leq n$, there exist an injective ring endomorphism $\sigma_i: R \longrightarrow R$ and a $\sigma_i-$derivation $\delta_i: R \longrightarrow R$ such that
\begin{eqnarray*}
x_ir=\sigma_i(r)x_i+\delta_i(r),
\end{eqnarray*}
for all $r \in R$.
\end{prop}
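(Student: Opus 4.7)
My plan is to extract both $\sigma_i$ and $\delta_i$ simultaneously from the relation in condition \eqref{equation1.1.2lezama}, and then use the uniqueness of the PBW basis expansion together with the associativity of multiplication in $A$ to verify all the required axioms at once. First I would fix $i$; for each $r \in R$, condition \eqref{equation1.1.2lezama} supplies elements $c_{i,r}, d_{i,r} \in R$ with $x_i r = c_{i,r} x_i + d_{i,r}$ (taking $c_{i,0}=d_{i,0}=0$ when $r=0$). Since both $1$ and $x_i$ belong to the left $R$-basis $Mon(A)$ of $A$ by condition \eqref{condition2}, these two coefficients are uniquely determined by $r$, so the assignments $\sigma_i(r):=c_{i,r}$ and $\delta_i(r):=d_{i,r}$ are well-defined functions $R\to R$.

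The main tool is then to expand $x_i(r+s)$ and $x_i(rs)$ in two different ways and read off the coefficients of $x_i$ and of $1$. For additivity I would compute
\[
\sigma_i(r+s)x_i+\delta_i(r+s) \;=\; x_i(r+s) \;=\; x_ir+x_is \;=\; (\sigma_i(r)+\sigma_i(s))x_i + (\delta_i(r)+\delta_i(s)),
\]
and uniqueness of the $\{1,x_i\}$-expansion forces both $\sigma_i$ and $\delta_i$ to be additive. For multiplicativity I would exploit associativity in $A$:
\[
x_i(rs) \;=\; (x_ir)s \;=\; \sigma_i(r)(x_is)+\delta_i(r)s \;=\; \sigma_i(r)\sigma_i(s)x_i + \bigl(\sigma_i(r)\delta_i(s)+\delta_i(r)s\bigr),
\]
and comparing with $\sigma_i(rs)x_i+\delta_i(rs)$ simultaneously yields $\sigma_i(rs)=\sigma_i(r)\sigma_i(s)$ and the Leibniz rule $\delta_i(rs)=\sigma_i(r)\delta_i(s)+\delta_i(r)s$, which is exactly the definition of a $\sigma_i$-derivation. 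The identity $x_i\cdot 1 = x_i$ likewise forces $\sigma_i(1)=1$ and $\delta_i(1)=0$, so $\sigma_i$ is a ring endomorphism.

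For injectivity I would appeal directly to the nontriviality clause already built into condition \eqref{equation1.1.2lezama}: when $r\neq 0$ the element $c_{i,r}=\sigma_i(r)$ is required to lie in $R\setminus\{0\}$, so combined with additivity this gives $\ker\sigma_i=0$. I do not anticipate any serious obstacle here; the entire argument is driven by the uniqueness of the PBW expansion in the basis $Mon(A)$. The only point that requires a little care, since $R$ need not be commutative, is that the factor $\sigma_i(r)$ in the product $\sigma_i(r)(x_is)$ must be distributed from the \emph{left} across $x_is=\sigma_i(s)x_i+\delta_i(s)$, which is legitimate precisely because $\sigma_i(r)\in R$ and $A$ is a left $R$-module.
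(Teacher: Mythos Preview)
Your argument is correct and is exactly the standard proof of this fact. Note, however, that the paper does not supply its own proof of this proposition: it is quoted from \cite{lezamalibro} and stated without demonstration, so there is no in-paper proof to compare against. What you have written is essentially the argument one finds in the cited source, driven entirely by the uniqueness of the expansion in the $R$-basis $Mon(A)$ together with associativity in $A$; nothing is missing.
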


Let $N$ be the subgroup of the multiplicative group $R^*$, the invertible elements of $R$, generated by the derived subgroup $[R^*, R^*]$ and the set of all elements of the form $z^{-1}\sigma_i(z)$, where $z \in R^*$ and $\sigma_i$ is given by the proposition \ref{proposition1.1.3lezama} for $i=1,\ldots, n$.

\begin{defn}
Let $A$ be a skew $PBW$ extension of $R$.
\begin{enumerate}[(a)]
\item $A$ is quasi-conmutative if the condition (\ref{condition3}) and (\ref{condition4}) in definition \ref{definition1.1.1lezama} are replaced by
\begin{enumerate}
\item[(3')] For every $1\leq i \leq n$ and $r \in R \setminus \{ 0\}$ there exists $c_{i,r} \in R \setminus \{ 0\}$ such that
\begin{eqnarray}
x_ir=c_{i,r}x_i.
\end{eqnarray}
\item[(4')] For every $1\leq i,j \leq n$ there exists $c_{j,i} \in R \setminus \{ 0\}$ such that
\begin{eqnarray}
x_jx_i=c_{j,i}x_ix_j.
\end{eqnarray}
\end{enumerate}
\item $A$ es bijective if $\sigma_i$ is bijective for every $1\leq i \leq n$ and $c_{j,i}$ is invertible for any $1\leq i < j \leq n$.

\item $A$ es general if $c_{ij}$ is invertible for every $1\leq i,j \leq n$ and the images of all $c_{ij}$, $1 \leq  i < j \leq n$, are independent in the group $R^* /N$. 
\end{enumerate}
\end{defn}
{\color{red}
\subsection{Multiplication rules for quasi-commutative skew $PBW$ extensions. }
}

\begin{prop}\label{conmutacion}
Let $A$ be a quasi-conmutative skew $PBW$ extension where $c_{ij}$ is invertible for $1\leq i,j \leq n$. Then 
\begin{enumerate}
\item $x_ix_j^t=c_{ij}\sigma_j(c_{ij})\sigma_j^2(c_{ij})\cdots \sigma_j^{t-1}(c_{ij})x_j^tx_i$ for every $t \in \mathbb{N}$. \label{item1}

\item $\sigma_i^n(z)=zz^{-1}\sigma_i(z)\sigma_i(z)^{-1}\sigma_i^2(z)\sigma_i^{2}(z)^{-1}\cdots \sigma_i^{n-1}(z)^{-1}\sigma_i^{n}(z)$ for every $n \in \mathbb{N}$ and $z \in R^*$, i.e., $\sigma_i^n(z)=z\cdot d$, where $d \in N$. \label{item2}
\item \begin{eqnarray}
x_ix_j^t=(c_{ij}^t \cdot d)x_j^tx_i, 
\end{eqnarray}
for every $t \in \mathbb{N}$ and some $d \in N$.\label{item3}
\item \begin{eqnarray}
x_i^tx_j^s=(c_{ij}^{ts}\cdot d)x_j^sx_i^t,
\end{eqnarray}
for some $d \in N$ and every $t,s \in \mathbb{N}$. \label{item4}
\item \label{item5} \begin{eqnarray}
(x_1^{t_1}x_2^{t_2}\cdots x_n^{t_n})(x_1^{l_1}x_2^{l_2}\cdots x_n^{l_n})=\left( \prod_{i<j} c_{ji}^{t_js_i} \cdot d \right) x_1^{t_1+l_1}x_2^{t_2+l_2}\cdots x_n^{t_n+l_n},
\end{eqnarray}
for some $d \in N$.
\end{enumerate} 
\end{prop}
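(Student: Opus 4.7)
The plan is to prove the five assertions in the listed order, with each part feeding into the next. The unifying observation is that $N$ contains the commutator subgroup $[R^{*},R^{*}]$ and is closed under every $\sigma_i$: if $d \in N$ then $\sigma_i(d) = d \cdot (d^{-1}\sigma_i(d))$, where the second factor is in $N$ by the defining generators of $N$. Consequently, in the quotient $R^{*}/N$ the group is abelian and each $\sigma_i$ becomes the identity, so all the bookkeeping of coefficients can be carried out ``up to $N$.''

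Part (1) I prove by induction on $t$. The base case $t=1$ is just $x_ix_j = c_{ij}x_jx_i$, which is condition (4'). In the inductive step I write $x_ix_j^{t+1} = (x_ix_j^t)x_j$, apply the hypothesis, use $x_ix_j=c_{ij}x_jx_i$ once more to rotate the trailing $x_i$ past $x_j$, and move the new $c_{ij}$ through $x_j^t$ via the quasi-commutative relation $x_jr=\sigma_j(r)x_j$, which contributes the factor $\sigma_j^t(c_{ij})$. Part (2) is essentially an identity: the right-hand side telescopes trivially to $\sigma_i^n(z)$. The content is that for $k=1,\ldots,n$ the factor $\sigma_i^{k-1}(z)^{-1}\sigma_i^{k}(z)$ has the form $w^{-1}\sigma_i(w)$ with $w = \sigma_i^{k-1}(z) \in R^{*}$, so each factor lies in $N$ by definition; the product of these $n$ elements is then some $d \in N$ with $\sigma_i^n(z) = z\cdot d$.

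Part (3) combines (1) and (2). Applying (2) to each $\sigma_j^{k}(c_{ij})$ gives $\sigma_j^{k}(c_{ij}) = c_{ij}\cdot d_k$ with $d_k \in N$, so the coefficient obtained in (1) becomes $c_{ij}(c_{ij}d_1)(c_{ij}d_2)\cdots(c_{ij}d_{t-1})$, which coincides with $c_{ij}^t\cdot d$ modulo commutators, hence modulo $N$. Part (4) I handle by induction on $t$ with (3) as the base. For the step, $x_i^{t+1}x_j^s = x_i(x_i^tx_j^s) = \sigma_i(c_{ij}^{ts}d)\,x_ix_j^sx_i^t$ by the hypothesis and $x_ir=\sigma_i(r)x_i$, and now (3) handles the inner $x_ix_j^s$. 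The closure of $N$ under $\sigma_i$ together with (2) applied to $c_{ij}^{ts}$ show that the accumulated coefficient is congruent to $c_{ij}^{(t+1)s}$ modulo $N$.

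Finally, for (5) I reduce the product of two basis monomials to a chain of pairwise swaps: move each $x_i^{l_i}$ from the right-hand monomial leftward past $x_{i+1}^{t_{i+1}}\cdots x_n^{t_n}$ using iterated applications of (4). Each swap of $x_j^{t_j}$ past $x_i^{l_i}$ (with $i<j$) contributes a coefficient congruent to $c_{ji}^{t_jl_i}$ modulo $N$; the subsequent task of pulling that coefficient past further generators only introduces additional $\sigma_k$-corrections and commutator corrections, all of which lie in $N$. Multiplying the contributions yields $\prod_{i<j}c_{ji}^{t_jl_i}\cdot d$ as claimed. The principal obstacle throughout is the accounting: at every step one must verify that the ``error'' factors produced really do belong to $N$, and this is precisely where the closure properties highlighted at the outset do all the work.
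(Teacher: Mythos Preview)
Your proposal is correct and follows essentially the same approach as the paper: an inductive/iterative computation for (1), the telescoping observation for (2), combining (1) and (2) for (3), iterating (3) for (4), and reducing (5) to repeated applications of (4) together with the properties of $N$. Your explicit remark that $N$ is $\sigma_i$-stable and that $R^{*}/N$ is abelian is exactly the ``properties of $N$'' the paper invokes without spelling out, so your write-up is in fact slightly more complete than the original.
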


\begin{proof}
\begin{enumerate}
\item Let $t \in \mathbb{N}$ be, then
\begin{eqnarray*}
x_ix_j^t&=& c_{ij}x_jx_ix_j^{t-1}\\
&=& c_{ij}x_jc_{ij}x_jx_ix_j^{t-2}\\
&=& c_{ij}\sigma_j(c_{ij})x_j^2x_ix_j^{t-2}\\
&\vdots& \qquad \vdots \\
&=& c_{ij}\sigma_j(c_{ij})\cdots \sigma_j^{t-2}(c_{ij})x_j^{t-1}x_ix_j \\
&=&c_{ij}\sigma_j(c_{ij})\cdots \sigma_j^{t-2}(c_{ij})x_j^{t-1}c_{ij}x_jx_i \\
&=& c_{ij}\sigma_j(c_{ij})\cdots \sigma_j^{t-2}(c_{ij})\sigma_j^{t-1}(c_{ij})x_j^{t}x_i.
\end{eqnarray*}
\item This part is immediate since the $\sigma_i's$ are injective (proposition \ref{proposition1.1.3lezama}) and $z$ is invertible. Therefore, $\sigma_i^n(z)=z\cdot d$ for some $d \in N$.

\item By (\ref{item1})
\begin{eqnarray}
x_ix_j^t=c_{ij}\sigma_j(c_{ij})\cdots \sigma_j^{t-2}(c_{ij})\sigma_j^{t-1}(c_{ij})x_j^{t}x_i,
\end{eqnarray}
and by (\ref{item2})
\begin{eqnarray}
c_{ij}\sigma_j(c_{ij})\cdots \sigma_j^{t-2}(c_{ij})\sigma_j^{t-1}(c_{ij})=c_{ij}\cdot (c_{ij}\cdot d_1)\cdots (c_{ij}\cdot d_{t-1}),
\end{eqnarray}
for some $d_i \in N$, $1\leq i \leq t-1$. By the properties of N and the commutativity of $R^*/N$ we have
\begin{eqnarray}
c_{ij}\sigma_j(c_{ij})\cdots \sigma_j^{t-2}(c_{ij})\sigma_j^{t-1}(c_{ij}) \equiv c_{ij}^t \quad m{o}d(N),
\end{eqnarray}
therefore, $c_{ij}\sigma_j(c_{ij})\cdots \sigma_j^{t-2}(c_{ij})\sigma_j^{t-1}(c_{ij})=c_{ij}^t \cdot d$, for some $d \in N$. Hence, $x_ix_j^t=(c_{ij}^t \cdot d)x_j^tx_i$ with $d \in N$.

\item Suppose $s,t \in \mathbb{N}$, then
\begin{eqnarray*}
x_i^tx_j^s &=& x_i^{t-1}(c_{ij}^s\cdot d')x_j^sx_i\\
&=& \sigma_i^{t-1}(c_{ij}^s \cdot d')x_i^{t-1}x_j^sx_i \\
&=& \sigma_i^{t-1}(c_{ij}^s \cdot d')x_i^{t-2}(c_{ij}^s \cdot d')x_j^2x_i^2 \\
&=& \sigma_i^{t-1}(c_{ij}^s \cdot d')\sigma_i^{t-2}(c_{ij}^s\cdot d')x_i^{t-2}x_j^sx_i^2 \\
& \vdots & \qquad \vdots \\
&=& \sigma_i^{t-1}(c_{ij}^s \cdot d')\sigma_i^{t-2}(c_{ij}^s \cdot d') \cdot \sigma_i(c_{ij}^s \cdot d')x_ix_j^sx_i^{t-1} \\
&=& \sigma_i^{t-1}(c_{ij}^s \cdot d')\sigma_i^{t-2}(c_{ij}^s \cdot d') \cdot \sigma_i(c_{ij}^s \cdot d')(c_{ij}^s\cdot d')x_j^sx_i^t,
\end{eqnarray*}
with $d' \in N$.
\\

Using (\ref{item2}) and the definition of $N$ ,we have
\begin{eqnarray}
\sigma_i^{t-1}(c_{ij}^s \cdot d')\sigma_i^{t-2}(c_{ij}^s \cdot d') \cdot \sigma_i(c_{ij}^s \cdot d')(c_{ij}^s\cdot d')=(c_{ij}^{ts} \cdot d)x_j^sx_i^t,
\end{eqnarray}
for some $d \in N$.

\item The proof of this part is given by (\ref{item2}), (\ref{item3}), (\ref{item4}) and the properties of N.
\end{enumerate}
\end{proof}

{\color{red}
\subsection{Automorphisms of skew $PBW$ extensions.}
}
Let $A$ be a skew $PBW$ extension of $R$, we understand $End(A)$ as the ring endomorphisms of $A$ that fixed the elements of $R$ and $Aut(A)$ is the set of invertible elements of $End(A)$.\\

\begin{thm}\label{theorem3.2.1}
Suppose that $A=\sigma(R)\langle x_1, \ldots, x_n \rangle$ is a general quasi-commutative  extension , and $\lambda \in End(A)$. If $x_i$ is such that $\lambda(x_i) \neq 0$, the leading coefficient  (using the lexicographical order) of $\lambda(x_i)$ is invertible, and there exist almost three indices $i,j,u$ for which this happens, then there exist $\lambda_1,\ldots, \lambda_n \in R$ such that $\lambda(x_w)=\lambda_wx_w$, for $w=1,\ldots, n$.
\end{thm}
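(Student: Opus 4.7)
The plan is to adapt Artamonov's proof of Theorem \ref{theorem2.1-2001} to the quasi-commutative skew $PBW$ setting, with Proposition \ref{conmutacion}(\ref{item5}) taking over the role of the monomial multiplication rule for quantum polynomial rings. The key observation is that, modulo $N$, the product of two standard monomials reduces to a pure product of powers of the $c_{rs}$'s, so the genericity hypothesis (independence of the $c_{rs}$ in $R^*/N$) becomes immediately applicable after cancelling the invertible scalar factors. The bulk of the argument then transposes almost verbatim from the quantum polynomial case.

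First I would fix three indices $i, j, u$ for which $\lambda(x_i), \lambda(x_j), \lambda(x_u)$ are nonzero with invertible leading coefficients (under the lex order on $\mathbb{N}^n$), and write the leading monomials of $\lambda(x_i)$ and $\lambda(x_j)$ as $\lambda_i x_1^{l_1}\cdots x_n^{l_n}$ and $\lambda_j x_1^{t_1}\cdots x_n^{t_n}$. Invertibility of the leading coefficients ensures no cancellation, so the leading monomial of $\lambda(x_i)\lambda(x_j)$ is the product of the two leading monomials. From $x_ix_j = c_{ij}x_jx_i$ we have $\lambda(x_i)\lambda(x_j) = c_{ij}\lambda(x_j)\lambda(x_i)$; applying Proposition \ref{conmutacion}(\ref{item5}) to both sides, passing to $R^*/N$ (which absorbs the $\sigma_k$-images of coefficients via item (\ref{item2}) and all commutators), and cancelling the invertible factor $\lambda_i\lambda_j$, one obtains
\begin{equation*}
\prod_{s<r} c_{rs}^{l_r t_s} \equiv c_{ij} \prod_{s<r} c_{rs}^{t_r l_s} \pmod{N}.
\end{equation*}

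Since $A$ is general, the images of the $c_{rs}$ in $R^*/N$ are independent, so comparing exponents (assuming $i>j$) yields $l_r t_s = \delta_{ri}\delta_{sj} + t_r l_s$ for $r\neq s$. Following Artamonov, if $l_p \neq 0$ for some $p \notin \{i,j\}$, then $t_q = t_p l_q l_p^{-1}$ for every $q\neq p$, forcing $l_i t_j - l_j t_i = 0$ and contradicting $l_i t_j - l_j t_i = 1$. Hence $l_p = t_p = 0$ for $p \notin \{i,j\}$. Running the same analysis on the pairs $(i,u)$ and $(j,u)$ collapses the leading monomials of $\lambda(x_w)$ for $w \in \{i,j,u\}$ to $\lambda_w x_w^{\epsilon}$ with $\epsilon = \pm 1$; since all exponents in a skew $PBW$ extension are nonnegative, $\epsilon = 1$, so each leading monomial is exactly $\lambda_w x_w$.

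To promote this to $\lambda(x_w) = \lambda_w x_w$ (no lower terms), I would apply the symmetric argument to the \emph{smallest} lex monomial of each $\lambda(x_w)$: in a domain setting this monomial appears, without cancellation, as the smallest monomial of $\lambda(x_i)\lambda(x_j)$, and the same chain of reasoning forces it to have the form $\mu_w x_w$. Leading and smallest lex monomials then coincide at the single variable $x_w$, so no intermediate monomial can appear and $\lambda(x_w)$ reduces to $\lambda_w x_w$. The step I expect to be the main obstacle is carrying out the $R^*/N$ reduction rigorously: one must verify that all $\sigma_k$-twists, derivation contributions, and PBW reordering constants really do collapse into $N$ uniformly on both sides of the commutation identity, and then ensure the smallest-monomial version of the argument is genuinely justified in the quasi-commutative skew $PBW$ setting, where (unlike Artamonov's Laurent framework) one has no direct access to unit arguments on low-order terms.
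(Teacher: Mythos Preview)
Your proposal is correct and follows essentially the same route as the paper's proof: compare leading lex monomials of $\lambda(x_i)\lambda(x_j)$ and $c_{ij}\lambda(x_j)\lambda(x_i)$, invoke Proposition~\ref{conmutacion}(\ref{item5}) and item~(\ref{item2}) to reduce mod $N$, use independence of the $c_{rs}$ to extract the system $l_r t_s = \delta_{ri}\delta_{sj} + t_r l_s$, and play the three indices off against each other exactly as in Artamonov's argument to force the leading monomial of each $\lambda(x_w)$ to be $\lambda_w x_w$. Two small remarks: your worry about ``derivation contributions'' is empty here, since quasi-commutativity means each $\delta_i = 0$; and the paper's own write-up is actually terser than yours---it stops after obtaining $l_i = t_j = d_u = 1$ and asserts the conclusion, leaving the smallest-monomial pass implicit by analogy with the proof of Theorem~\ref{theorem2.1-2001} (where, for $r=0$, one has $\epsilon = \epsilon' = 1$ automatically).
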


\begin{proof}
We consider the lexicographical order for the exponents of the monomials in A. We suppose that $\lambda(x_i)$ and $\lambda(x_j)$ are non-zero, and with leading monomial $\lambda_ix_1^{l_1}\cdots x_n^{l_n}$ and $\lambda_jx_1^{t_1}\cdots x_n^{t_n}$ respectively. As leading coefficients of $\lambda(x_i)$ and $\lambda(x_j)$ are invertible, we know that the leading coefficient $\lambda(x_ix_j)$ is the product of leading coefficients  of $\lambda(x_i)$ and $\lambda(x_j)$. Moreover, we know that $x_ix_j=c_{ij}x_jx_i$, so, $\lambda(x_ix_j)=c_{ij}\lambda(x_jx_i)$, i.e. $\lambda(x_i)\lambda(x_j)=c_{ij}\lambda(x_j)\lambda(x_i)$. Then, the leading monomial of the right  term is equal to the leading monomial of the left term, i.e

\begin{eqnarray}
(\lambda_ix_1^{l_1}\cdots x_n^{l_n})(\lambda_jx_1^{t_1}\cdots x_n^{t_n})=c_{ij}(\lambda_jx_1^{t_1}\cdots x_n^{t_n})(\lambda_ix_1^{l_1}\cdots x_n^{l_n}).
\end{eqnarray}
So,
\begin{eqnarray}
\lambda_i\sigma^{l}(\lambda_j)(x_1^{l_1}\cdots x_n^{l_n})(x_1^{t_1}\cdots x_n^{t_n})=c_{ij}\lambda_j \sigma^t(\lambda_i)(x_1^{t_1}\cdots x_n^{t_n})(x_1^{l_1}\cdots x_n^{l_n}),
\end{eqnarray}
where $l=(l_1,\ldots,l_n)$ and $t=(t_1,\ldots,t_n)$.
\\

By item (\ref{item5}) in the proposition  \ref{conmutacion} we obtain 
{\small
\begin{eqnarray*}
\lambda_i \sigma^l(\lambda_j)\left(\prod_{s<r} c_{rs}^{l_rt_s} \cdot d \right) x_1^{l_1+t_1} \cdots x_n^{l_n+t_n}=c_{ij}\lambda_j \sigma^t(\lambda_i) \left( \prod_{s<r} c_{rs}^{t_rl_s} \cdot d \right)  x_1^{l_1+t_1} \cdots x_n^{l_n+t_n}.
\end{eqnarray*}
}
Using the unique representation of the elements of $A$, and the property (\ref{item2}) of the proposition \ref{conmutacion} we deduce
\begin{eqnarray*}
\lambda_i \lambda_j \left( \prod_{s<r}c_{rs}^{l_rt_s}\right) \equiv c_{ij}\lambda_i \lambda_j\left(\prod_{s<r} c_{rs}^{t_rl_s} \right) \quad m{o}d(N).
\end{eqnarray*}
Given that $A$ is a general extension and suppose that $i>j$, we have
\begin{eqnarray}
l_rt_s=\delta_{ri}\delta_{sj}+t_rl_s.\label{3.2.3trabajo}
\end{eqnarray}
If $l_p\neq 0$ for some $p \neq i,j$, then for every index $q \neq p$
\begin{eqnarray*}
l_qt_p=t_ql_p,
\end{eqnarray*}
and therefore, $t_q=t_pl_ql_p^{-1}$. In particular, when $q=i,j$
\begin{eqnarray*}
t_i &=& t_pl_il_p^{-1}, \\
t_j &=& t_pl_jl_p^{-1},
\end{eqnarray*}
so
\begin{eqnarray*}
l_it_j-l_jt_i&=& l_it_pl_jl_p^{-1}-l_jt_pl_il_p^{-1} \\
&=& 0,
\end{eqnarray*}
which contradicts \eqref{3.2.3trabajo}. So $l_p =0$ for $p \neq i,j$. Similarly we can prove that $t_p=0$ if $p \neq i,j$.
\\

Hence, the leading coefficients of $\lambda(x_i)$ and $\lambda(x_j)$ have the form $\lambda_ix_i^{l_i}x_j^{l_j}$ and $\lambda_jx_i^{t_i}x_j^{t_j}$, where $l_it_j-l_jt_i=1$.
\\

By hypothesis, there is a third variable $x_u$ such that $\lambda(x_u)\neq 0$. Applying the same argument to $(i,u)$ and $(j,u)$ must be $l_j, t_i$  are zero and that the leading coefficient of $\lambda(x_u)$ have the form 
\begin{eqnarray}
\lambda_ux_i^{d_i}x_u^{d_u}=\lambda_ux_j^{d_j}x_u^{d_u},
\end{eqnarray}
then, $d_i=d_j=0$. 
\\

So $l_it_j=1$ and $l_id_u=1$, i.e, $l_i=t_j=d_u=1$.
\\

Therefore, for every $1\leq i \leq n$, $\lambda(x_i)=\lambda_ix_i$, with $\lambda_i \in R^*$ if $\lambda(x_i) \neq 0$.
\end{proof}

\begin{cor}
If $A$ is a skew $PBW$ extension of $R$, $\lambda \in End(A)$, in the situation of Theorem 2.1, and every $\lambda(x_i)\neq 0$, $1\leq i \leq n$, then $\lambda$ is an automorphism of $A$. In particular, any injective endomorphism of $A$ is an automorphism.
\end{cor}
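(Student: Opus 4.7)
The plan is to combine Theorem \ref{theorem3.2.1} with the quasi-commutative multiplication rules of Proposition \ref{conmutacion} to show that $\lambda$ sends every basis monomial $x^{t}$ to a unit multiple of itself; bijectivity then follows formally. Since the hypotheses of Theorem \ref{theorem3.2.1} are in force and $\lambda(x_i)\neq 0$ for every $i$, that theorem yields invertible elements $\lambda_1,\ldots,\lambda_n\in R^{*}$ such that $\lambda(x_w)=\lambda_w x_w$ for all $w$. Because $\lambda$ fixes $R$, it is in particular a left $R$-module endomorphism, the viewpoint I exploit for bijectivity.

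Next I would compute $\lambda(x^{t})$ for an arbitrary basis monomial $x^{t}=x_1^{t_1}\cdots x_n^{t_n}$. Using $\lambda(x_i)=\lambda_i x_i$, each factor $(\lambda_i x_i)^{t_i}$ expands, via the quasi-commutative identity $x_i r=\sigma_i(r)x_i$, into $\lambda_i\sigma_i(\lambda_i)\cdots\sigma_i^{t_i-1}(\lambda_i)\,x_i^{t_i}$. Any ring endomorphism sends units to units (from $\sigma_i(z)\sigma_i(z^{-1})=1$), so each $\sigma_i^{k}(\lambda_i)$ lies in $R^{*}$, and so does the whole scalar factor. Multiplying these blocks together for $i=1,\ldots,n$ and then iteratively commuting the accumulated scalars to the left of the $x$-variables via the same rule (an instance of Proposition \ref{conmutacion}), I obtain $\lambda(x^{t})=u_{t}\,x^{t}$ for some $u_{t}\in R^{*}$.

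Bijectivity now follows at once. For surjectivity, $R$-linearity of $\lambda$ gives $x^{t}=\lambda(u_{t}^{-1}x^{t})$ for every $t$, so $Mon(A)$ lies in the image of $\lambda$; as the image is a left $R$-submodule, $\lambda$ is surjective. For injectivity, if $\lambda\bigl(\sum r_{t}x^{t}\bigr)=\sum r_{t}u_{t}x^{t}=0$, the $R$-linear independence of $Mon(A)$ gives $r_{t}u_{t}=0$, and invertibility of $u_{t}$ forces $r_{t}=0$. Hence $\lambda$ is an $R$-linear bijection, and therefore a ring automorphism of $A$. For the ``in particular'' clause, any injective $\lambda\in End(A)$ satisfies $\lambda(x_i)\neq 0$ for every $i$ (since $x_i\neq 0$), so the first part of the corollary applies directly.

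The main obstacle I expect is the slightly fiddly bookkeeping in the second paragraph: one must track, via the iterated applications of Proposition \ref{conmutacion}, that the coefficient accumulated while passing scalars past $x$-factors remains in $R^{*}$. This reduces to the fact that every $\sigma_i$ preserves units and that $N\subseteq R^{*}$, but it is the one spot where care is needed to be sure no non-unit contribution slips in from the commutations themselves.
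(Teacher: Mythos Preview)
Your proposal is correct and follows essentially the same line as the paper's own proof: both establish that $\lambda(x^{t})=u_{t}\,x^{t}$ with $u_{t}\in R^{*}$ and then deduce bijectivity by the obvious $R$-linear argument. The only cosmetic difference is that the paper packages the scalar as $\bigl(\prod_{k}\lambda_{k}^{l_{k}}\bigr)\cdot u$ with $u\in N\subseteq R^{*}$ via Proposition~\ref{conmutacion}(\ref{item2}), whereas you reach the same conclusion by noting directly that each $\sigma_i$ preserves units.
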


\begin{proof}
We observe that
\begin{eqnarray}
\lambda(x_1^{l_1}x_2^{l_2}\cdots x_n^{l_n})=\left(\prod_k \lambda_k^{l_k}\cdot u \right) x_1^{l_1}\cdots x_n^{l_n},
\end{eqnarray}
where $\lambda_k$ is given by the theorem \ref{theorem3.2.1} and for some $u \in N$.
\\

By hypothesis, any $\lambda_i \in R^*$ and $N \subseteq R^*$, then $\left(\prod\limits_k \lambda_k^{l_k}\cdot u \right) \in R^*$, and hence we can consider $\lambda(x_1^{l_1}\cdot x_n^{l_n})=d \cdot x_1^{l_1}\cdots x_n^{l_n}$, for some $d \in R^*$.
\\

If $\lambda(\sum\limits_{l\in \mathbb{N}^n}a_lx^l)=0$, then $\sum\limits_{l\in \mathbb{N}^n}a_ld_lx^l=0$, where $d_l$ is invertible. By the unique representation of each element of $A$ we conclude that each $a_ld_l=0$ for $l \in \mathbb{N}^n$. As each $d_l$ is invertible, then $a_l=0$ for any $l$. i.e., $\sum\limits_{l\in \mathbb{N}^n}a_lx^l=0$, proving that $\lambda$ is an injective endomorphism.
\\

Similarly, if $p=\sum\limits_{l\in \mathbb{N}^n}a_lx^l$ then $p'=\sum\limits_{l\in \mathbb{N}^n}a_ld_l^{-1}x^l$ satisfies $\lambda(p')=p$, proving that $\lambda$ is a surjective endomorphism.  
\end{proof}

\begin{defn}[\cite{lezamalibro}]
Let $A$ be a skew $PBW$ extension of $R$ with endomorphisms $\sigma_i$, $1\leq i \leq n$, as in Proposition \ref{proposition1.1.3lezama}.
\begin{enumerate}
\item For $\alpha=(\alpha_1, \ldots, \alpha_n) \in \mathbb{N}^n$, $\sigma^\alpha:=\sigma_1^{\alpha_1} \cdots \sigma_n^{\alpha_n}$, $|\alpha|=\alpha_1+\cdots \alpha_n$. If $\beta=(\beta_1, \ldots, \beta_n)\in \mathbb{N}^n$, then $\alpha+\beta=(\alpha_1+\beta_1,\ldots, \alpha_n+\beta_n)$.
\item For $X=x^{\alpha} \in Mon(A)$, $exp(X):=\alpha$ and $deg(X):=|\alpha|$.
\item Let $0\neq f \in A$, $t(f)$ is the finite set of terms that conform $f$, i.e., if $f=c_1X_1+\cdots +c_tX_t$, with $X_i \in Mon(A)$ and $c_i \in R \setminus \{0\}$, then $t(f):=\{c_1X_1,\ldots, c_tX_t\}$. \label{3definicion1.1.7}
\item Let $f$ be as in (\ref{3definicion1.1.7}), then $deg(f):=\max\{ deg(X_i)\}_{i=1}^t$.
\end{enumerate}
\end{defn}

\begin{thm}[\cite{lezama},\cite{lezamalibro}]\label{proposition1.2.1lezama}
Let $A$ be a skew $PBW$ extension of $R$. Then, $A$ is a filtered ring with filtration given by
\begin{eqnarray}
F_m:=  \begin{cases}
R, \qquad \qquad \qquad \qquad \qquad if \quad m=0,\\
\{ f \in A: deg(f)\leq m\}, \,\,\, \quad if \quad m\geq 1
\end{cases}
\end{eqnarray}
and the corresponding graded ring $Gr(A)$ is a quasi-commutative skew $PBW$ extension of $R$. Moreover, if $A$ is bijective, then $Gr(A)$ is a quasi-commutative bijective skew $PBW$ extension of $R$.
\end{thm}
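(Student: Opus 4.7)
The plan is to verify the filtration axioms first, then construct the associated graded ring and identify it as a quasi-commutative skew $PBW$ extension of $R$, and finally transfer bijectivity.

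First I would check that $\{F_m\}_{m\geq 0}$ really is a ring filtration. Inclusion $F_m \subseteq F_{m+1}$ and exhaustion $\bigcup_m F_m = A$ are immediate from condition (\ref{condition2}) of Definition \ref{definition1.1.1lezama}. The content is the multiplicativity $F_m\cdot F_k \subseteq F_{m+k}$. I would prove this by induction on $m+k$, reducing to the case of a product of standard monomials $r x^{\alpha}\cdot s x^{\beta}$ with $|\alpha|\leq m$ and $|\beta|\leq k$. Using Proposition \ref{proposition1.1.3lezama} and condition (\ref{condition3}) we may commute $x^{\alpha}$ past $s$ at the cost of adding terms of the form $\sigma^{\alpha}(s)x^{\alpha}$ plus lower-order corrections coming from the $\delta_i$'s, all of degree $\leq |\alpha|$. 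To then collect $x^\alpha x^\beta$ into the standard basis, I would repeatedly apply condition (\ref{condition4}), which replaces an inverted pair $x_jx_i$ by $c_{j,i}x_ix_j$ modulo $R+Rx_1+\cdots+Rx_n$; every such correction has degree strictly less than the degree of the leading term it corrects. Thus the resulting expression is in $F_{|\alpha|+|\beta|}\subseteq F_{m+k}$.

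Next I would build $Gr(A)=\bigoplus_{m\geq 0} F_m/F_{m-1}$ (with $F_{-1}:=0$) and identify its generators. Since the $R$-basis $Mon(A)$ splits according to total degree, the natural map sending $x^\alpha\in F_{|\alpha|}$ to its class $\overline{x}^{\alpha}\in F_{|\alpha|}/F_{|\alpha|-1}$ produces an $R$-basis of $Gr(A)$ indexed by $\mathbb{N}^n$, verifying conditions (\ref{condition1}) and (\ref{condition2}) for $Gr(A)$ with generators $\overline{x_1},\ldots,\overline{x_n}$. For the quasi-commutative versions (3$'$) and (4$'$): the identity $x_i r - \sigma_i(r)x_i = \delta_i(r)\in R = F_0$ means the degree-$1$ term $x_i r - \sigma_i(r)x_i$ vanishes in $F_1/F_0$, so $\overline{x_i}\, r = \sigma_i(r)\overline{x_i}$ with no lower-order remainder, which is exactly (3$'$) with $c_{i,r}=\sigma_i(r)$. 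Likewise condition (\ref{condition4}) gives $x_jx_i - c_{j,i}x_ix_j \in F_1 \subseteq F_1$, so passing to $F_2/F_1$ yields $\overline{x_j}\,\overline{x_i}=c_{j,i}\overline{x_i}\,\overline{x_j}$, which is (4$'$). Hence $Gr(A)=\sigma(R)\langle \overline{x_1},\ldots,\overline{x_n}\rangle$ as a quasi-commutative skew $PBW$ extension.

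Finally, for the bijective case, I observe that the endomorphisms $\sigma_i$ of $R$ induced on $Gr(A)$ via the relation $\overline{x_i}r=\sigma_i(r)\overline{x_i}$ agree with the original $\sigma_i$ on $R=F_0$, and the scalars $c_{j,i}\in R$ controlling (4$'$) in $Gr(A)$ are the same elements of $R$ appearing in condition (\ref{condition4}) for $A$. Thus if the $\sigma_i$ are bijective and each $c_{j,i}$ is invertible in $R$, the same holds in $Gr(A)$, which then qualifies as quasi-commutative bijective. The main obstacle in this proof is the bookkeeping of the lower-order terms produced by the $\delta_i$'s and by condition (\ref{condition4}) when proving multiplicativity of the filtration; an induction on $m+k$ together with a secondary induction reducing products to standard monomials handles this cleanly.
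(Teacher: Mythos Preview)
The paper does not actually prove this theorem; it is quoted from \cite{lezama} and \cite{lezamalibro} and stated without proof. Your outline is the standard argument for this result and is essentially correct: verify $F_m F_k\subseteq F_{m+k}$ by reducing products of standard monomials using (\ref{equation1.1.2lezama}) and (\ref{equation1.1.3lezama}) and tracking that every correction term has strictly smaller total degree; then read off the relations (3$'$), (4$'$) in $Gr(A)$ from the fact that the lower-order terms in (\ref{equation1.1.2lezama}) and (\ref{equation1.1.3lezama}) die in $F_1/F_0$ and $F_2/F_1$ respectively. The bijectivity transfer is immediate since the data $(\sigma_i, c_{j,i})$ is unchanged. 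One small typographical slip: you wrote ``$x_jx_i - c_{j,i}x_ix_j \in F_1 \subseteq F_1$'' where you presumably meant $\in R + Rx_1+\cdots+Rx_n \subseteq F_1$.
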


\begin{thm}\label{theorem3.2.3}
Let $A$ be a skew $PBW$ extension of $R$ and $\lambda \in End(A)$ a filtered endomorphism, such that
\begin{enumerate}
\item $A$ is general,
\item for each $\lambda(x_i)\neq 0$, $\lambda(x_i)\notin F_0(A)$ and the leading coefficient (according to the lexicographical order) is invertible, and there exists at least three indices $i,j,u$ with this property.
\end{enumerate}
Then, there exist $\lambda_1,\ldots,\lambda_n \in R$ and $a_{10},a_{20}, \ldots,a_{n0}\in R$ such that $\lambda(x_w)=a_{w0}+\lambda_wx_w$, for $w=1,\ldots,n$.
\end{thm}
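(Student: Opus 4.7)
The plan is to reduce to Theorem \ref{theorem3.2.1} by passing to the associated graded ring. Since $\lambda$ is a filtered endomorphism, it induces a graded endomorphism $Gr(\lambda)$ of $Gr(A)$. By Theorem \ref{proposition1.2.1lezama}, $Gr(A)$ is a quasi-commutative skew $PBW$ extension of $R$ whose commutation constants $c_{ij}$ and endomorphisms $\sigma_i$ are the same as those of $A$ (the terms $\delta_i(r)\in F_0$ and the lower-order remainders $x_jx_i-c_{ji}x_ix_j\in F_1$ vanish in the graded pieces $F_1/F_0$ and $F_2/F_1$ respectively). In particular the subgroup $N\subseteq R^*$ is the same, so generality of $A$ transfers to generality of $Gr(A)$.

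Since $\lambda$ is filtered, $\lambda(x_i)\in F_1=R+Rx_1+\cdots+Rx_n$, so one may write
$$\lambda(x_i)=a_{i0}+b_{i1}x_1+\cdots+b_{in}x_n,\qquad a_{i0},b_{ij}\in R.$$
The hypothesis $\lambda(x_i)\notin F_0$ says that at least one $b_{ij}$ is nonzero, and the lex leading term of $\lambda(x_i)$ is $b_{ij_0}x_{j_0}$, where $j_0$ is the least index with $b_{ij_0}\neq 0$. By hypothesis this coefficient is invertible. The class of $\lambda(x_i)$ in $Gr_1(A)=F_1/F_0$ is
$$Gr(\lambda)(x_i)=b_{i1}x_1+\cdots+b_{in}x_n,$$
which is therefore nonzero, has the same lex leading coefficient $b_{ij_0}\in R^*$, and this holds for the three indices $i,j,u$ singled out in the hypothesis.

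Thus $Gr(\lambda)\in End(Gr(A))$ satisfies the hypotheses of Theorem \ref{theorem3.2.1}, and there exist $\lambda_1,\dots,\lambda_n\in R$ (with $\lambda_w\in R^*$ whenever $Gr(\lambda)(x_w)\neq 0$) such that $Gr(\lambda)(x_w)=\lambda_w x_w$ for every $w$. Comparing with $Gr(\lambda)(x_w)=\sum_j b_{wj}x_j$ inside $Gr_1(A)=\bigoplus_j Rx_j$ forces $b_{wj}=0$ for $j\neq w$ and $b_{ww}=\lambda_w$; thus $\lambda(x_w)=a_{w0}+\lambda_w x_w$. For indices $w$ with $\lambda(x_w)\in F_0$ (or $\lambda(x_w)=0$), the statement holds with $\lambda_w=0$. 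The only nontrivial verification in this program is the transfer of generality to $Gr(A)$, and this is immediate from the explicit presentation of $Gr(A)$ provided by Theorem \ref{proposition1.2.1lezama}; everything else is mechanical once the filtered-graded reduction is set up.
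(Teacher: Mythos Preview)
Your proof is correct and follows essentially the same route as the paper: pass to $Gr(A)$ via the filtered endomorphism, invoke Theorem~\ref{theorem3.2.1} there, and read off the conclusion in $F_1/F_0$. You are in fact more careful than the paper in spelling out why generality transfers to $Gr(A)$ (same $c_{ij}$, same $\sigma_i$, hence same $N$) and why the leading-coefficient hypothesis survives in the graded piece; the paper simply asserts that ``all the hypotheses of Theorem~\ref{theorem3.2.1} are satisfied'' and proceeds.
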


\begin{proof}
As $\lambda$ is a filtered endomorphism, we induce a graded endomorphism $Gr(\lambda):Gr(A) \longrightarrow Gr(A)$. We observe that $Gr(\lambda)(x_i+F_0)\neq 0$, for those $x_i$ with $\lambda(x_i)\neq 0$. So, all the hypothesis of the Theorem \ref{theorem3.2.1} are satisfy  for $Gr(A)$ and $Gr(\lambda)$, therefore $Gr(\lambda)(x_i)=\lambda_ix_i+F_0$ for some $\lambda_i \in R$.
\\

Finally, as $\lambda(x_i)+F_0=\lambda_ix_i+F_0$, then $\lambda(x_i)=\lambda_ix_i+a_{i0}$ for some $a_{i0}\in R$ and for any $i=1,\ldots, n$. 
\end{proof}

\begin{cor}\label{corolario3.2.2}
If $A$ is skew $PBW$ extension of $R$, $\lambda \in End(A)$, in the situation of the theorem \ref{theorem3.2.3}, and every $\lambda(x_i)\neq 0$, then  $\lambda$ is an automorphism of $A$. In particular, if $\lambda$ is an injective endomorphism, $\lambda$ is an automorphism. 
\end{cor}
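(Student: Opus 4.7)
The plan is to reduce the statement to the corollary following Theorem \ref{theorem3.2.1} by means of the associated-graded construction already set up in the paper. First, I would apply Theorem \ref{theorem3.2.3} to the given $\lambda$ in order to write $\lambda(x_w)=a_{w0}+\lambda_w x_w$ for each $w=1,\dots,n$, with $a_{w0}\in R$ and $\lambda_w\in R$. Since the hypotheses of Theorem \ref{theorem3.2.3} require the leading coefficient of every nonzero $\lambda(x_w)$ to be invertible, and since we now assume that every $\lambda(x_w)\neq 0$, the same argument that was carried out in the proof of Theorem \ref{theorem3.2.1} forces each $\lambda_w$ to lie in $R^*$.

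Next, I would consider the graded endomorphism $Gr(\lambda):Gr(A)\to Gr(A)$ induced by the filtered map $\lambda$. Theorem \ref{proposition1.2.1lezama} guarantees that $Gr(A)$ is a quasi-commutative skew $PBW$ extension of $R$ with the same structure constants $c_{ij}$, hence it remains general. On generators one has $Gr(\lambda)(x_w)=\lambda_w x_w$ with invertible leading coefficient $\lambda_w\in R^*$, so $Gr(\lambda)$ meets the hypotheses of the corollary immediately following Theorem \ref{theorem3.2.1}. Applying that corollary, $Gr(\lambda)$ is an automorphism of $Gr(A)$.

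The final step is the standard filtered-to-graded lifting. For injectivity, suppose $\lambda(f)=0$ for some nonzero $f\in A$, and let $m$ be minimal with $f\in F_m$. The case $m=0$ is ruled out because $\lambda$ fixes $R$ pointwise; for $m\geq 1$, the nonzero class $f+F_{m-1}\in F_m/F_{m-1}$ would satisfy $Gr(\lambda)(f+F_{m-1})=\lambda(f)+F_{m-1}=0$, contradicting the injectivity of $Gr(\lambda)$. For surjectivity, I would induct on filtration degree: given $p\in F_m$, surjectivity of $Gr(\lambda)$ produces $q\in F_m$ with $\lambda(q)-p\in F_{m-1}$, and the inductive hypothesis then yields $q'\in F_{m-1}$ with $\lambda(q')=\lambda(q)-p$, so that $\lambda(q-q')=p$; the base case $m=0$ is trivial since $\lambda$ fixes $R$. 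The \emph{in particular} clause is then immediate, because an injective endomorphism automatically satisfies $\lambda(x_i)\neq 0$ for every $i$.

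The only point requiring actual care is the verification that each $\lambda_w$ is genuinely invertible in $R$ and that the hypotheses of the corollary after Theorem \ref{theorem3.2.1} transfer cleanly to the pair $(Gr(A),Gr(\lambda))$; once those transfers are in hand the filtered-to-graded lift is entirely routine.
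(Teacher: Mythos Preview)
Your proposal is correct and follows essentially the same route as the paper: pass to $Gr(\lambda)$, invoke the corollary after Theorem~\ref{theorem3.2.1} to conclude that $Gr(\lambda)$ is an automorphism of $Gr(A)$, and then lift back to $A$. The only difference is that the paper handles the final lifting step by citing the standard fact (for positive filtrations) that $Gr(\lambda)$ bijective implies $\lambda$ is a strict automorphism, whereas you write out the injectivity and surjectivity arguments by hand; also note that the invertibility of each $\lambda_w$ follows directly from the standing hypothesis of Theorem~\ref{theorem3.2.3} (it is the leading coefficient of $\lambda(x_w)$), so there is no need to re-run the argument from Theorem~\ref{theorem3.2.1}.
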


\begin{proof}
$Gr(\lambda)(x_i+F_0)\neq 0$ for every $i$, by the corollary \ref{corolario3.2.2}, $Gr(\lambda)$ is an automorphism of $Gr(A)$, since the filtration is positive, then $\lambda$ is a strict automorphism of $A$ (see \cite{lezamahomologica}).
\end{proof}

{\color{red}
\subsection{Automorphisms of skew $PBW$ extension on  left Ore domain }
}

In this subsection we give a description of the automorphism group of a skew $PBW$ extension of a left Ore domain $R$. \\

\begin{thm}[\cite{lezamalibro}]\label{theoremquasipolynomial}
Let $A$ be a quasi-commutative skew $PBW$ extension of a ring $R$. Then,
\begin{enumerate}
\item $A$ is isomorphic to an iterated skew polynomial ring of endomorphism type.
\item If $A$ is bijective, then each endomorphism is bijective.
\end{enumerate}
\end{thm}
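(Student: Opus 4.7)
The plan is to build the iterated skew polynomial structure one variable at a time. I set $R_0 := R$ and, for $1\le i\le n$, let $R_i$ denote the $R$-subalgebra of $A$ generated by $x_1,\ldots,x_i$; condition (\ref{condition2}) of Definition \ref{definition1.1.1lezama} forces $R_i$ to be a free left $R$-module with basis $\{x_1^{a_1}\cdots x_i^{a_i}\}$, and $R_n=A$. The goal is to realise $R_i$ as $R_{i-1}[x_i;\bar\sigma_i]$ for a ring endomorphism $\bar\sigma_i:R_{i-1}\to R_{i-1}$; reading off the rules (3') and (4'), the natural candidate is
\[
\bar\sigma_i(r):=\sigma_i(r)\quad(r\in R),\qquad \bar\sigma_i(x_j):=c_{i,j}\,x_j\quad(1\le j<i),
\]
extended to $R_{i-1}$ as a ring homomorphism.

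The main step, and the principal obstacle, is to verify that $\bar\sigma_i$ is well-defined, which amounts to checking that the prescription is compatible with the relations $x_j r=\sigma_j(r)x_j$ and $x_j x_k=c_{j,k}x_kx_j$ governing $R_{i-1}$. Both compatibilities are extracted from associativity in $A$: multiplying each relation by $x_i$ on the left and simplifying via (3') and (4') yields identities such as $\sigma_i\sigma_j(r)\,c_{i,j}=c_{i,j}\,\sigma_j\sigma_i(r)$ together with a hexagon-type relation among $c_{i,j}$, $c_{i,k}$, $c_{j,k}$ and their $\sigma$-translates, which are precisely the identities that $\bar\sigma_i$ must obey on the corresponding generators. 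Once $\bar\sigma_i$ is known to be a ring endomorphism of $R_{i-1}$, the universal property of skew polynomial rings of endomorphism type produces a ring homomorphism
\[
\Phi:R[x_1;\bar\sigma_1][x_2;\bar\sigma_2]\cdots[x_n;\bar\sigma_n]\longrightarrow A
\]
sending $x_i\mapsto x_i$; both sides are free left $R$-modules with basis $\{x_1^{a_1}\cdots x_n^{a_n}:(a_1,\ldots,a_n)\in\mathbb{N}^n\}$ and $\Phi$ sends basis to basis, hence $\Phi$ is an isomorphism, proving (1).

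For (2), if $A$ is bijective then every $\sigma_i$ is bijective and every $c_{j,i}$ with $i<j$ is invertible; combining this with the quasi-commutative relation $x_ix_j=c_{i,j}x_jx_i$ for $i<j$ forces $c_{i,j}=c_{j,i}^{-1}\in R^*$ as well. Then $\bar\sigma_i|_R=\sigma_i$ is bijective and $\bar\sigma_i$ multiplies each basis generator $x_j$ by an element of $R^*$, so an inverse for $\bar\sigma_i$ can be written down explicitly on the PBW basis of $R_{i-1}$ by combining $\sigma_i^{-1}$ on $R$-coefficients with the invertible scalar factors arising from commutation via item (\ref{item5}) of Proposition \ref{conmutacion}. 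The delicate point throughout remains the compatibility identities of the second paragraph, which must be deduced from associativity and quasi-commutativity in $A$ rather than postulated.
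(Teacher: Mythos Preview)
Your proposal is correct and follows the same construction the paper records: the paper states this result with a citation to \cite{lezamalibro} and only sketches the isomorphism by writing down $B:=R[z_1;\theta_1]\cdots[z_n;\theta_n]$ with $\theta_j(r)=\sigma_j(r)$ and $\theta_j(z_i)=c_{ji}z_i$ for $i<j$, which is exactly your $\bar\sigma_i$. You supply the verification (well-definedness via associativity, the basis-to-basis argument for the isomorphism, and the explicit inverse in the bijective case) that the paper leaves to the cited reference.
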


For the proof, we observe that we give a ring isomorphism with the iterated skew polynomial ring $B:=R[z_1; \theta_1]\cdots [z_n; \theta_n]$, where
\begin{eqnarray}
\begin{cases}
\theta_1=\sigma_1, \\
\theta_j: R[z_1; \theta_1]\cdots [z_{j-1},\theta_{j-1}] \longrightarrow R[z_1; \theta_1]\cdots [z_{j-1},\theta_{j-1}], \\
\theta_j(z_i)=c_{ji}z_i \text{ para } 1\leq i < j \leq n, \quad \theta_j(r)=\sigma_j(r) \text{ para }r \in R,
\end{cases}
\end{eqnarray}
where $\sigma_i$ is given by the Proposition \ref{proposition1.1.3lezama}.

\begin{thm}\label{theorem1.5.6lezama}
Let $R$ be a ring and $S \subset R$ a multiplicative subset such that 
\begin{enumerate}
\item $S^{-1}R$ exists.
\item $\sigma(S)\subseteq S$.
\end{enumerate}
then 
\begin{eqnarray}
S^{-1}(R[x;\sigma,\delta])\cong (S^{-1}R)[x;\overline{\sigma}, \overline{\delta}],
\end{eqnarray}
with
\begin{eqnarray}
& S^{-1}R & \xlongrightarrow{\overline{\sigma}} S^{-1}R\nonumber \\
& \frac{a}{s} & \longmapsto \frac{\sigma(a)}{\sigma(s)} \nonumber ,
\end{eqnarray}
\begin{eqnarray}
& S^{-1}R & \xlongrightarrow{\overline{\delta}} S^{-1}R\nonumber \\
& \frac{a}{s} & \longmapsto \frac{\delta(s)}{\sigma(s)}\frac{a}{s}+\frac{\delta(a)}{\sigma(s)}. \nonumber 
\end{eqnarray}
Moreover, if $RS^{-1}$ exists, then
\begin{eqnarray}
(R[x; \delta])S^{-1}\cong (RS^{-1})[x;\overline{\delta}].
\end{eqnarray}
\end{thm}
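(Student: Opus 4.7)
The proof plan is to construct the isomorphism directly via the universal property of Ore localization on one side and the universal property of a skew polynomial ring on the other, and then check that the resulting ring maps are mutually inverse. Before building the maps, I would first establish two auxiliary facts: (i) that $\overline{\sigma}$ and $\overline{\delta}$ are well defined on $S^{-1}R$ as a ring endomorphism and a $\overline{\sigma}$-derivation respectively; (ii) that $S$, regarded as a set of constants, is a left Ore subset of $R[x;\sigma,\delta]$, so the left hand side makes sense. Well-definedness of $\overline{\sigma}$ on equivalent fractions uses $\sigma(S)\subseteq S$; that $\overline{\delta}$ is a $\overline{\sigma}$-derivation is then forced by computing $\overline{\delta}(s^{-1})$ from $0=\overline{\delta}(s^{-1}s)$ via the Leibniz rule, which after combining with $\overline{\delta}(a)=\delta(a)/1$ reproduces the displayed formula. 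For the Ore property on $R[x;\sigma,\delta]$, one pushes $s\in S$ past powers of $x$ using $xs=\sigma(s)x+\delta(s)$ and iterates, reducing to the Ore condition in $R$ together with $\sigma(S)\subseteq S$.

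To build the forward map $\widetilde{\phi}\colon S^{-1}(R[x;\sigma,\delta])\to (S^{-1}R)[x;\overline{\sigma},\overline{\delta}]$, I start from the natural inclusion $R[x;\sigma,\delta]\to (S^{-1}R)[x;\overline{\sigma},\overline{\delta}]$ that sends coefficients $r\mapsto r/1$ and fixes $x$. This is a ring homomorphism because the defining skew relation $xr=\sigma(r)x+\delta(r)$ in the source is the specialization at $s=1$ of the relation $x(r/1)=\overline{\sigma}(r/1)x+\overline{\delta}(r/1)$ in the target. Since every $s\in S$ is sent to the unit $s/1$, the universal property of left Ore localization extends this map uniquely to the desired $\widetilde{\phi}$.

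For the inverse $\psi\colon (S^{-1}R)[x;\overline{\sigma},\overline{\delta}]\to S^{-1}(R[x;\sigma,\delta])$, I first use the universal property of $S^{-1}R$ to extend the inclusion $R\hookrightarrow S^{-1}(R[x;\sigma,\delta])$, $r\mapsto r/1$, to a coefficient embedding $S^{-1}R\to S^{-1}(R[x;\sigma,\delta])$ (possible because each $s\in S$ is now invertible in the target); then I use the universal property of the skew polynomial ring to adjoin $x\mapsto x/1$. The main obstacle sits here: one must verify that sending $x\mapsto x/1$ is compatible with the commutation relation $x(a/s)=\overline{\sigma}(a/s)x+\overline{\delta}(a/s)$ of the source, i.e.\ that inside $S^{-1}(R[x;\sigma,\delta])$ the identity $(x/1)(s^{-1}a/1)=(\sigma(s)^{-1}\sigma(a)/1)(x/1)+\overline{\delta}(a/s)/1$ actually holds. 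This is a bookkeeping computation: rewrite $(x/1)(1/s)$ by producing a common denominator via the Ore condition, apply $xs=\sigma(s)x+\delta(s)$, and simplify; the resulting formula recovers precisely the expression defining $\overline{\delta}(a/s)$ in the statement. Once $\psi$ is shown to be a ring homomorphism, $\widetilde{\phi}$ and $\psi$ are mutually inverse by checking agreement on the generators $r/1$, $x$ and $s/1$, which finishes the first isomorphism. The right-localization statement $(R[x;\delta])S^{-1}\cong (RS^{-1})[x;\overline{\delta}]$ then follows by the dual argument applied to the opposite ring, which in the pure-derivation case $\sigma=\mathrm{id}$ transports cleanly because $\delta$ remains a derivation on $R^{op}$.
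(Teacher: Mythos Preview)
The paper does not supply its own proof of this theorem: the statement is quoted from \cite{lezamalibro} and immediately followed only by the remark that, ``according to the proof in \cite{lezamalibro}'', the map $\psi\colon R[x;\sigma,\delta]\to (S^{-1}R)[x;\overline{\sigma},\overline{\delta}]$ appearing in the universal property is $\psi(\sum_i a_ix^i)=\sum_i\frac{a_i}{1}x^i$. So there is no in-paper argument to compare against beyond that single line.

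Your proposal is the standard proof and is consistent with that one hint: your forward map $\widetilde{\phi}$ is exactly the extension of the paper's $\psi$ through the universal property of $S^{-1}(R[x;\sigma,\delta])$, and you go further by constructing the inverse via the universal property of the skew polynomial ring and checking the skew relation in the localization. The auxiliary steps you list (well-definedness of $\overline{\sigma}$ and $\overline{\delta}$, and that $S$ is left Ore in $R[x;\sigma,\delta]$) are the right ones and are needed. One small caution on the ``Moreover'' clause: passing to the opposite ring works, but you should be explicit that $(R[x;\delta])^{op}\cong R^{op}[x;-\delta]$ (the derivation picks up a sign under the opposite multiplication in the twisted Leibniz sense when $\sigma\neq\mathrm{id}$, though for $\sigma=\mathrm{id}$ a derivation on $R$ is indeed still a derivation on $R^{op}$), so that the left-localization result you already proved applies cleanly.
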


According the proof in \cite{lezamalibro}, the ring homomorphism $\psi:R[x;\sigma,\delta]\longrightarrow R[x;\sigma,\delta]$, in the university property for $S^{-1}R$, is give by $\psi(\sum\limits_i a_ix^i)=\sum\limits_i\frac{a_i}{1}x_i$.

\begin{cor}\label{corollary1.5.8lezama}
Let $R$ be a ring, $S \subset R$ a multiplicative subset and
\[ A:=R[x_1;\sigma_1,\delta_1]\cdots [x_n;\sigma_n,\delta_n]\]
the iterated skew polynomial ring sucht that
\begin{enumerate}
\item $S^{-1}R$ exists,
\item $\sigma_i(S)\subseteq S$, for each $1\leq i \leq n$.
\end{enumerate}
Then
\begin{eqnarray}
S^{-1}(R[x_1;\sigma_1, \delta_1]\cdots [x_n;\sigma_n, \delta_n]) \cong (S^{-1}R)[x_1;\overline{\sigma_1}, \overline{\delta_1}]\cdots [x_n;\overline{\sigma_n}, \overline{\delta_n}],
\end{eqnarray}
with
\begin{eqnarray*}
& S^{-1}R  & \xlongrightarrow{\overline{\sigma_i}} S^{-1}R \\
& \frac{a}{s} & \longmapsto \frac{\sigma_i(a)}{\sigma_i(s)}, 
\end{eqnarray*}

\begin{eqnarray*}
& S^{-1}R  & \xlongrightarrow{\overline{\delta_i}} S^{-1}R \\
& \frac{a}{s} & \longmapsto \frac{\delta_i(s)}{\sigma_i(s)}\frac{a}{s}+\frac{\delta_i(a)}{\sigma_i(s)}.
\end{eqnarray*}
\end{cor}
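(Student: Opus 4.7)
The natural approach is induction on the number $n$ of indeterminates, using Theorem \ref{theorem1.5.6lezama} both as the base case ($n=1$) and as the inductive engine. Write
\[
A_j := R[x_1;\sigma_1,\delta_1]\cdots[x_j;\sigma_j,\delta_j],
\]
so that $A_0=R$ and $A_n=A$, and let $S_j\subseteq A_j$ denote the image of $S$ under the canonical inclusion $R\hookrightarrow A_j$. I would like to apply Theorem \ref{theorem1.5.6lezama} to the one-step extension $A_{j}=A_{j-1}[x_j;\sigma_j,\delta_j]$ with the multiplicative set $S_{j-1}$.

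The base case $n=1$ is exactly the first isomorphism of Theorem \ref{theorem1.5.6lezama}. For the inductive step, assume
\[
S^{-1}A_{j-1}\ \cong\ (S^{-1}R)[x_1;\overline{\sigma_1},\overline{\delta_1}]\cdots[x_{j-1};\overline{\sigma_{j-1}},\overline{\delta_{j-1}}].
\]
To invoke Theorem \ref{theorem1.5.6lezama} once more, I must check its two hypotheses for $S_{j-1}\subseteq A_{j-1}$ with respect to $(\sigma_j,\delta_j)$: first, that $S_{j-1}^{-1}A_{j-1}$ exists (equivalently, $S_{j-1}$ satisfies the Ore conditions inside $A_{j-1}$), and second, that $\sigma_j(S_{j-1})\subseteq S_{j-1}$. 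The second condition is immediate from the fact that the elements of $S$ lie in $R$ and that on $R$ we have $\sigma_j|_R = \sigma_j$, combined with the hypothesis $\sigma_j(S)\subseteq S$. The existence of the Ore localization in $A_{j-1}$ follows from the inductive hypothesis, since $S^{-1}A_{j-1}$ is exhibited concretely as an iterated skew polynomial ring over $S^{-1}R$, and the universal property of $S^{-1}A_{j-1}$ gives the required Ore extension of scalars.

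Once the hypotheses are verified, Theorem \ref{theorem1.5.6lezama} produces the isomorphism
\[
S^{-1}A_j \ \cong\ \bigl(S^{-1}A_{j-1}\bigr)[x_j;\overline{\sigma_j},\overline{\delta_j}],
\]
with the extended $\overline{\sigma_j},\overline{\delta_j}$ described by the same formulas $\overline{\sigma_j}(a/s)=\sigma_j(a)/\sigma_j(s)$ and $\overline{\delta_j}(a/s)=(\delta_j(s)/\sigma_j(s))(a/s)+\delta_j(a)/\sigma_j(s)$. Combining with the inductive identification of $S^{-1}A_{j-1}$ finishes the step. Setting $j=n$ yields the displayed isomorphism in the statement.

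The main technical obstacle I anticipate is the verification that $\overline{\sigma_j}$ and $\overline{\delta_j}$ obtained from the one-step application of Theorem \ref{theorem1.5.6lezama} at stage $j$ agree, on the previously localized variables $x_1,\ldots,x_{j-1}$, with what one obtains by formally extending $\sigma_j$ and $\delta_j$ via the formulas; this amounts to showing that the isomorphism in the inductive hypothesis is compatible with $\sigma_j,\delta_j$, which follows from the universal property of the localization and the naturality remark recorded after Theorem \ref{theorem1.5.6lezama}, namely that the canonical map $\psi$ sends $\sum a_ix^i$ to $\sum (a_i/1)x^i$ and thus intertwines the actions. The final isomorphism can then be assembled by iterating this compatibility $n$ times.
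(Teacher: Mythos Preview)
Your approach is correct and is precisely what the paper does: it simply remarks that one applies Theorem \ref{theorem1.5.6lezama} iteratively, so your induction on $n$ with that theorem as both base case and inductive engine matches the paper's (essentially one-line) argument. Your extra care in checking that $S$ remains Ore in each $A_{j-1}$ and that the extended $\overline{\sigma_j},\overline{\delta_j}$ are compatible with the inductive isomorphism via the explicit map $\psi(\sum a_t x^t)=\sum (a_t/1)x^t$ just makes explicit what the paper leaves implicit.
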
 
Applying of iterated form the construction in the Theorem \ref{theorem1.5.6lezama}, we obtain that the function $\psi: A \longrightarrow(S^{-1}R)[x_1; \overline{\sigma_1}, \overline{\delta_1}]\cdots [x_n; \overline{\sigma_n}, \overline{\delta_n}]$, in the universal property of $S^{-1}A$ is give by $\psi(\sum\limits_t a_tx^t)=\sum\limits_t\frac{a_t}{1}x^t$.

\begin{prop}\label{proposition3.3.1}
Let $A=R[x_1,\theta_1]\cdots [x_n,\theta_n]$ be a iterated skew polynomial ring of endomorphism type, where $\theta_i|_R$ is a injective endomorphism of $R$ for $1\leq i \leq n$, and $\theta_i(x_j)=c_{ij}x_j$, for every $1\leq j < i \leq n$ with $c_{ij}$ left invertible. Then $A$ is a quasi-commutative skew $PBW$ extension of $R$ with endomorphisms $\sigma_i=\theta_i|_R$.
\end{prop}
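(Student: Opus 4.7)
The plan is to verify directly the four defining conditions of a quasi-commutative skew $PBW$ extension (conditions (\ref{condition1}), (\ref{condition2}), (3'), (4')) for $A=R[x_1;\theta_1]\cdots[x_n;\theta_n]$, reading off the required data $\sigma_i$ and $c_{j,i}$ from the iterated skew polynomial structure.

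First I would dispatch the easy conditions. Condition (\ref{condition1}) ($R\subseteq A$) is built into the iterated construction: $R$ sits inside $R[x_1;\theta_1]$, which sits inside $R[x_1;\theta_1][x_2;\theta_2]$, and so on. Condition (\ref{condition2}) is the standard freeness statement for iterated skew polynomial rings of endomorphism type: since each step $R[x_1;\theta_1]\cdots[x_{k-1};\theta_{k-1}][x_k;\theta_k]$ is a free left module on $1,x_k,x_k^2,\ldots$ over the previous stage, an induction on $n$ shows that $A$ is free as a left $R$-module on the set of standard monomials $\{x_1^{t_1}\cdots x_n^{t_n}:t\in\mathbb{N}^n\}=Mon(A)$.

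Next I would check condition (3'). For $r\in R\setminus\{0\}$ and $1\leq i\leq n$, the defining relation of the $i$-th skew polynomial step, together with the fact that the $\theta_i$ are of endomorphism type (no derivation), gives
\begin{equation*}
x_ir=\theta_i(r)x_i=\sigma_i(r)x_i,
\end{equation*}
so I set $c_{i,r}:=\sigma_i(r)$. Because $\theta_i|_R=\sigma_i$ is injective by hypothesis, $\sigma_i(r)\neq 0$, hence $c_{i,r}\in R\setminus\{0\}$.

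The main content is condition (4'), and here the left-invertibility hypothesis on the $c_{ij}$ will be essential. I would split into three cases. For $i=j$, take $c_{i,i}=1$. For $j>i$, the relation of the skew polynomial ring and the assumption $\theta_j(x_i)=c_{ji}x_i$ give
\begin{equation*}
x_jx_i=\theta_j(x_i)x_j=c_{ji}x_ix_j,
\end{equation*}
so $c_{j,i}:=c_{ji}$ works; it is nonzero because it is left invertible. For $j<i$ I do not have a direct relation in the ``wrong'' direction, so I would use the case already done with the indices swapped: $x_ix_j=c_{ij}x_jx_i$ with $c_{ij}$ left invertible, say $d\in R$ with $dc_{ij}=1$. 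Left-multiplying by $d$ yields $x_jx_i=dx_ix_j$, so I set $c_{j,i}:=d$, which is nonzero (as $dc_{ij}=1\neq 0$) and belongs to $R$.

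I expect the $j<i$ case to be the only subtle step: one has to notice that \emph{left} invertibility of $c_{ij}$ is exactly what is needed to rewrite $x_jx_i$ in the normal form $c_{j,i}x_ix_j$ required by (4'), because the scalar produced by inverting must sit on the \emph{left} of the monomial. Once this is observed, the verification of (4') is immediate and the proposition follows, with the extension endomorphisms $\sigma_i$ of Proposition \ref{proposition1.1.3lezama} coinciding by construction with $\theta_i|_R$.
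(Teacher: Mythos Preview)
Your proposal is correct and follows essentially the same approach as the paper: both verify the four defining conditions directly, use injectivity of $\theta_i|_R$ for condition (3'), and use left invertibility of $c_{ij}$ to obtain the reversed relation $x_jx_i=d\,x_ix_j$ for condition (4'). The only organizational difference is that the paper frames the entire argument as an explicit induction on $n$ (treating $A=A'[x_n;\theta_n]$ with $A'$ already known to be a skew $PBW$ extension, and spelling out the linear-independence argument for $Mon(A)$), whereas you invoke the standard freeness of iterated Ore extensions for condition (\ref{condition2}) and check the remaining conditions globally; the mathematical content is the same.
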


\begin{proof}
If $n=1$, we have that $A=R[x_1,\theta_1]$ is a quasi-commutative skew $PBW$ extension of $R$ (see \cite{lezamalibro}).
\\

Suppose that $A'=R[x_1,\theta_1]\cdots [x_{n-1},\theta_{n-1}]$ is a quasi-commutative skew $PBW$ extension, and consider $A=R[x_1,\theta_1]\cdots [x_n,\theta_n]=A'[x_n,\theta_n]$ a quasi-commutative skew $PBW$ extension of $A'$. So

\begin{enumerate}
\item $R \subseteq A' \subseteq A$.
\item All element $p \in A$ can write like
\begin{eqnarray}
p=\sum_j^{n}a_jx_n^j,
\end{eqnarray}
with $a_j \in A'$.
\\

By hypothesis $a_j=\sum\limits_l a_{jl}x^l$, with $a_{jl}\in R\setminus \{0\}$ and $l=(l_1,\ldots, l_{n-1})$. So
\begin{eqnarray}
p=\sum_{j=0}^n \sum_l a_{jl}x^lx_n^j,
\end{eqnarray} 
whence, any element $p \in A$ can write with linear combination of $Mon(A):=Mon\{ x_1,\ldots,x_n\}$.
\\

We see that $Mon(A)$ is a $R-$basis for $A$. We suppose that $0=\sum\limits_t a_tx^t$ with $t=(t_1,\ldots,t_n) \in \mathbb{N}^n$ and $a_t \in R$, then we have
\begin{eqnarray}
0 &= & \sum_{i=0}^s \left( \sum_{\{t: t_n=i\}}a_tx_1^{t_1}\cdots x_{n-1}^{t_{n-1}}\right)x_n^i, 
\end{eqnarray}
where $s$ is the largest exponent appearing for $x_n$.
\\

Since $A=\sigma(A')\langle x_n \rangle$, $\sum\limits_{\{ t:t_n=i\}}a_tx_1^{t_1}\cdots x_{n-1}^{t_{n-1}}\in A'$, and the representation of zero is unique, and consider $A$ with $A'-$module, we have that $\sum\limits_{\{ t:t_n=i\}}a_tx_1^{t_1}\cdots x_{n-1}^{t_{n-1}}=0$ for every $i$.
\\

Finally, as $A'=\sigma(R)\langle x_1,\ldots,x_{n-1}\rangle$ then $\sum_{\{ t:t_n=i\}}a_tx_1^{t_1}\cdots x_{n-1}^{t_{n-1}}=0$ implies that each $a_t=0$, therefore the element zero has a unique representation in the elements $Mon(A)$. So, $Mon(A)$ is a $R-$basis for $A$.

\item If $r \in R \setminus \{ 0 \}$ then $x_ir=\theta_i(r)x_i$, but by hypothesis we know that $\theta_i(r) \in R \setminus \{ 0\}$. So there exists $c_{i,r}:=\theta_i(r)$ such that $x_ir-c_{ir}x_i=0 \in R$.

\item If $j<i$, $x_ix_j=\theta_i(x_j)x_i=c_{ij}x_jx_i$, as $c_{ij}$ is left invertible, then $c_{ji}x_ix_j=x_jx_i$, where $c_{ji}$ is the inverse of $c_{ij}$. Next, we have found $c_{ij} \in R\setminus \{ 0\}$ such that $x_ix_j-c_{ij}x_jx_i=0 \in R+Rx_1+\cdots Rx_n$.
\end{enumerate}
So we have that $A$ is a quasi-commutative skew $PBW$ extension of $R$.
\\

ultimately, we see that $x_ir=\theta_i(r)x_i$ for each $i$, which tells us, by the unique representation of elements in $A$, that $\sigma_i(r)=\theta_i(r)$ for every $r \in R$.

\end{proof}
From now on assume that $S:=R \setminus \{ 0\}$.
\begin{thm}\label{theorem3.3.3}
Let $A$ be a quasi-commutative skew $PBW$ extension over a left Ore Domain $R$ with endomorphisms $\sigma_i$. Then $S^{-1}A \cong \sigma(S^{-1}R)\langle x_1,\ldots, x_n \rangle$ and is a quasi-commutative skew $PBW$ extension of $S^{-1}R$, where the endomorphisms $\overline{\sigma_i}$ for $\sigma(S^{-1}R)\langle x_1,\ldots, x_n \rangle$ are
\begin{eqnarray}
\overline{\sigma_i}: & S^{-1}R& \longrightarrow S^{-1}R \nonumber \\
&\frac{a}{s}& \longmapsto  \frac{\sigma_i(a)}{\sigma_i(s)},
\end{eqnarray}
and
\begin{eqnarray}
\psi: & A & \longrightarrow S^{-1}A \nonumber \\
& \sum\limits _t a_tx^t & \longmapsto \sum_t \frac{a_t}{1}x^t,
\end{eqnarray}
is the homomorphism in the universal property of $S^{-1}A$.
\end{thm}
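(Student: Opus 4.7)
The strategy is to pass through the iterated skew polynomial presentation of $A$, localize one layer at a time using the results from the previous subsection, and then recognize the result as a skew $PBW$ extension via Proposition \ref{proposition3.3.1}.

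First, since $A$ is a quasi-commutative skew $PBW$ extension, Theorem \ref{theoremquasipolynomial} provides an isomorphism $A \cong R[z_1;\theta_1]\cdots[z_n;\theta_n]$, where $\theta_1 = \sigma_1$ and $\theta_j|_R = \sigma_j$ for each $j$, and $\theta_j(z_i) = c_{ji}z_i$ for $i<j$. I would next verify the hypotheses needed to apply Corollary \ref{corollary1.5.8lezama} with $S = R\setminus\{0\}$. Since $R$ is a left Ore domain, the classical left quotient $S^{-1}R$ exists and is in fact a division ring. Because each $\sigma_i$ is an injective ring endomorphism of the domain $R$ (Proposition \ref{proposition1.1.3lezama}), it sends $S$ into $S$; this propagates to each $\theta_j$, since $\theta_j|_R = \sigma_j$ and the multiplicative set is $S \subseteq R$.

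With these hypotheses confirmed, Corollary \ref{corollary1.5.8lezama} yields
\begin{equation*}
S^{-1}A \;\cong\; (S^{-1}R)[z_1;\overline{\theta_1}]\cdots[z_n;\overline{\theta_n}],
\end{equation*}
and supplies the explicit formula $\overline{\theta_i}(a/s) = \sigma_i(a)/\sigma_i(s)$ on $S^{-1}R$, which is exactly the $\overline{\sigma_i}$ claimed in the statement. The universal homomorphism $\psi:A\to S^{-1}A$ constructed in the proofs of Theorem \ref{theorem1.5.6lezama} and its corollary is obtained by sending each coefficient $a_t$ to $a_t/1$ and fixing each $x^t$, which gives the stated formula $\psi(\sum_t a_t x^t) = \sum_t (a_t/1) x^t$.

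Finally, to recognize the right-hand side as a quasi-commutative skew $PBW$ extension of $S^{-1}R$, I would apply Proposition \ref{proposition3.3.1} to the iterated skew polynomial ring $(S^{-1}R)[z_1;\overline{\theta_1}]\cdots[z_n;\overline{\theta_n}]$. The required hypotheses are: (i) each $\overline{\theta_i}|_{S^{-1}R} = \overline{\sigma_i}$ is an injective endomorphism of $S^{-1}R$, which follows because it is defined by $a/s \mapsto \sigma_i(a)/\sigma_i(s)$ with $\sigma_i$ injective on the domain $R$; and (ii) the constants $\overline{\theta_j}(z_i) = \overline{c_{ji}}\, z_i$ satisfy $\overline{c_{ji}} = c_{ji}/1 \neq 0$ in the division ring $S^{-1}R$, hence are (two-sided) invertible and in particular left invertible. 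Proposition \ref{proposition3.3.1} then delivers the skew $PBW$ structure $\sigma(S^{-1}R)\langle x_1,\ldots,x_n\rangle$ with endomorphisms $\overline{\sigma_i}$, completing the identification.

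The only subtle point—and what I would regard as the main verification—is checking compatibility of the two descriptions: that the endomorphisms $\overline{\sigma_i}$ obtained by localizing the coefficient ring coincide with the endomorphisms produced by Proposition \ref{proposition3.3.1} applied to the iterated skew polynomial ring over $S^{-1}R$. This amounts to tracing through the isomorphism of Theorem \ref{theoremquasipolynomial} and confirming that the relation $x_i r = \overline{\sigma_i}(r)x_i$ holds in $S^{-1}A$ for every $r \in S^{-1}R$, which is immediate from the explicit form of $\overline{\theta_i}$ together with the fact that $\psi$ is a ring homomorphism.
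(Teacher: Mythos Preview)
Your proposal is correct and follows essentially the same route as the paper: pass to the iterated skew polynomial presentation via Theorem \ref{theoremquasipolynomial}, localize using Corollary \ref{corollary1.5.8lezama}, and then invoke Proposition \ref{proposition3.3.1} to recover the quasi-commutative skew $PBW$ structure over $S^{-1}R$. Your write-up is in fact slightly more explicit than the paper's in verifying the left-invertibility hypothesis of Proposition \ref{proposition3.3.1} (by observing that $S^{-1}R$ is a division ring) and in checking that the induced $\overline{\theta_i}$ agree with the $\overline{\sigma_i}$ of the statement.
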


\begin{proof}
Since $R$ is a left Ore domain, then $S=R \setminus \{ 0\}$ is a Ore set, and therefore of $A$.
\\

$A$ is a quasi-commutative then by the Theorem \ref{theoremquasipolynomial}, $A \cong R[x_1; \theta_1]\cdots [x_n; \theta_n]$ such that
\begin{eqnarray}
\begin{cases}
\theta_1=\sigma_1, \\
\theta_j: R[x_1, \theta_1]\cdots [x_{j-1}; \theta_{j-1}] \longrightarrow R[x_1, \theta_1]\cdots [x_{j-1}; \theta_{j-1}] \\
\theta_j(x_i)=c_{ji}x_i \text{ for } 1 \leq i <j \leq n, \quad \theta_j(r)=\sigma_j(r)\text{ for }r \in R,
\end{cases}
\end{eqnarray}
where the $\sigma_i$ are given for the proposition \ref{proposition1.1.3lezama}.
\\

Observe that for all $r \in S$, $\theta_i(r)=\sigma_i(r)$. So, by the Theorem \ref{corollary1.5.8lezama}, we can consider 
\begin{eqnarray}
S^{-1}(R[x_1;\theta_1]\cdots [x_n; \theta_n]) \cong (S^{-1}R)[x_1; \overline{\theta_1}]\cdots [x_n; \overline{\theta_n}],
\end{eqnarray}
with
\begin{eqnarray}
& S^{-1}R & \xlongrightarrow{\overline{\theta_i}}S^{-1}R \nonumber \\
& \frac{a}{s} & \longmapsto \frac{\theta_i(a)}{\theta_i(s)}=\frac{\sigma_i(a)}{\sigma_i(s)}.
\end{eqnarray}

Moreover, $\overline{\theta_i}\left(\frac{a}{s}\right)=\overline{\sigma_i} \left( \frac{a}{s}\right)$ for each $\frac{a}{s}\in S^{-1}R$, where $\overline{\sigma_i}$ is given by the universal property of $S^{-1}R$ with homomorphism $\psi '$.

\begin{eqnarray}
\begin{diagram}
\node{R} \arrow{s,l}{\sigma_i} \arrow[2]{e,t}{\psi'} \node{} \node{S^{-1}R} \arrow[2]{sw,r,..}{\overline{\sigma_i}}\\
\node{R}\arrow{s,l}{\psi'} \\
\node{S^{-1}R}
\end{diagram}
\end{eqnarray}
We have that $\overline{\sigma_i}$ is a injective homomorphism, since $R$ has no zero divisors, and $\sigma_i$ is injective. So $\overline{\theta_i}$ is a injective homomorphisms of $S^{-1}R$.
\\

Therefore, we have an  iterated skew polynomial ring of endomorphism type
\[ (S^{-1}R)[x_1; \overline{\theta_1}]\cdots [x_n; \theta_n],\]

such that $\overline{\theta_i}$ is a ring injective endomorphism of $S^{-1}R$ for $1\leq i \leq n$. Hence, using the Proposition \ref{proposition3.3.1}, $(S^{-1}R)[x_1; \overline{\theta_1}]\cdots [x_n; \theta_n]$ is a quasi-commutative skew $PBW$ extension form $\sigma(S^{-1}R)\langle x_1,\ldots, x_n \rangle$, with endomorphisms $\overline{\sigma_i}$.
\\

Finally, $\sigma(S^{-1}R)\langle x_1, \ldots, x_n \rangle \cong S^{-1}A$, where $\psi$ is given in the proof  Corollary \ref{corollary1.5.8lezama} and coincides with that given in this theorem.
\end{proof}

For the next results is necessary to assume that if $A$ is a quasi-commutative extension, then $S^{-1}A$ is a general quasi-commutative extension of $S^{-1}R$, since, I could not test that, if they are independent in $R$ then  are independent in $(S^{-1}R)^*/N$.

\begin{thm}\label{theorem3.3.4}
Suppose  that $A$ is a quasi-commutative skew $PBW$ extension over a left Ore domain $R$, such that $S^{-1}A$ is a general quasi-commutative extension, $\lambda \in End(A)$ , and there exists at least three indices $i,j,t$, with $\lambda(x_i), \lambda(x_j), \lambda(x_t) \neq 0$. Then there exist $\lambda_1,\ldots, \lambda_n \in R$ such that $\lambda(x_w)=\lambda_wx_w$, for every $1\leq w \leq n$.
\end{thm}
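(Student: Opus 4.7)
The plan is to reduce to Theorem \ref{theorem3.2.1} by localizing at $S=R\setminus\{0\}$. Since $R$ is a left Ore domain, the classical left quotient ring $S^{-1}R$ is a division ring, and by Theorem \ref{theorem3.3.3} the localization $S^{-1}A$ is naturally isomorphic to $\sigma(S^{-1}R)\langle x_1,\ldots,x_n\rangle$, a quasi-commutative skew $PBW$ extension of $S^{-1}R$ which is assumed to be general. My first step is to extend $\lambda$ to $S^{-1}A$: because $\lambda$ fixes $R$ pointwise, it sends every element of $S$ to a unit of $S^{-1}A$, so the universal property of left localization produces a unique endomorphism $\overline{\lambda}\in End(S^{-1}A)$ with $\overline{\lambda}\circ\psi=\psi\circ\lambda$, acting as $\overline{\lambda}(s^{-1}a)=s^{-1}\lambda(a)$ and fixing $S^{-1}R$.

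Next I would verify that $\overline{\lambda}$ satisfies all the hypotheses of Theorem \ref{theorem3.2.1} in the extension $S^{-1}A$. The ring $A$ is a domain (being, by Theorem \ref{theoremquasipolynomial} and Proposition \ref{proposition3.3.1}, an iterated skew polynomial ring of endomorphism type over the domain $R$ with injective endomorphisms), so $\psi$ is injective and the three images $\overline{\lambda}(x_i)$, $\overline{\lambda}(x_j)$, $\overline{\lambda}(x_t)$ remain nonzero; any nonzero leading coefficient of $\overline{\lambda}(x_w)$ lies in the division ring $S^{-1}R$ and is therefore automatically invertible. Applying Theorem \ref{theorem3.2.1} to $\overline{\lambda}$ then yields elements $\overline{\lambda}_1,\ldots,\overline{\lambda}_n\in S^{-1}R$ with
\begin{eqnarray*}
\overline{\lambda}(x_w)=\overline{\lambda}_w\,x_w,\qquad w=1,\ldots,n.
\end{eqnarray*}

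To finish, I would descend this identity back to $A$. Writing $\lambda(x_w)=\sum_t a_t x^t$ with $a_t\in R$ and comparing with $\overline{\lambda}(x_w)=\overline{\lambda}_w x_w$ inside $S^{-1}A$, the uniqueness of the representation of elements with respect to the basis $Mon(A)$ of $S^{-1}A$ as a left $S^{-1}R$-module forces $a_t=0$ for every $t\neq w$ and $a_w/1=\overline{\lambda}_w$; in particular $\overline{\lambda}_w$ already lies in the image of $R$ in $S^{-1}R$, so setting $\lambda_w:=a_w\in R$ yields $\lambda(x_w)=\lambda_w x_w$ as required.

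The main obstacle I expect is not the localization itself but the clean transfer of hypotheses to $S^{-1}A$: that $\overline{\lambda}$ is well defined as an endomorphism, that no nonzero $\lambda(x_k)$ collapses under $\psi$, that the leading-coefficient-invertibility and three-index conditions persist, and — most delicately — that the standing assumption of generality on $S^{-1}A$ covers the cases needed. Once these points are secured (the first three from $R$ being a domain and $S^{-1}R$ a division ring, and the last from the hypothesis of the theorem), the argument reduces mechanically to an application of Theorem \ref{theorem3.2.1} followed by a basis comparison.
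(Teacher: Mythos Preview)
Your proposal is correct and follows essentially the same route as the paper: extend $\lambda$ to $\overline{\lambda}\in End(S^{-1}A)$ via the universal property of localization, check that the three nonzero images survive because $\psi$ is injective, apply Theorem \ref{theorem3.2.1} over the division ring $S^{-1}R$, and then pull the result back to $A$ by comparing coefficients in the $Mon(A)$-basis. Your write-up is in fact slightly more careful than the paper's in that you explicitly note why the leading-coefficient hypothesis of Theorem \ref{theorem3.2.1} is automatic over $S^{-1}R$.
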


\begin{proof}
Consider the Ore set $S$, then $S^{-1}R$ exist, and by the theorem \ref{theorem3.3.3} we can consider $S^{-1}A$. Let $\lambda \in End(A)$, then $\lambda(r)=r$ for all $r \in R$, so we find $\overline{\lambda} \in End(S^{-1}A)$,

\begin{eqnarray}
\begin{diagram}
\node{A} \arrow{s,l}{\lambda} \arrow[2]{e,t}{\psi} \node{} \node{S^{-1}A} \arrow[2]{sw,r,..}{\overline{\lambda}}\\
\node{R}\arrow{s,l}{\psi} \\
\node{S^{-1}A}
\end{diagram}
\end{eqnarray}
Furthermore, $\psi$ is injective, because $S$ is not zero divisor in $A$, wherefore we consider $A \hookrightarrow S^{-1}A$. Then, suppose that $\lambda(x_i)=\sum{a_{it}x^t}$ for every $1\leq i \leq n$, $\overline{\lambda}(x_i)=\psi(\lambda(x_i))=\sum \frac{a_{it}}{1}x^t$. Consequently, if $\overline{\lambda}(x_i)=0$ then $\lambda(x_i)=0$.
\\

Next, There are at least three $i,j,t$ such that $\overline{\lambda}(x_i),\overline{\lambda}(x_j),\overline{\lambda}(x_t)\neq 0$, as $S^{-1}A$ is a quasi-commutative general skew $PBW$ extension of $S^{-1}R$, then by the Theorem \ref{theorem3.2.1}, $\overline{\lambda}(x_i)=\frac{a_i}{1}x_i$ for every $1\leq i \leq n$ ($a_i:=a_{ii}$). Since $\psi$ is injective, we conclude that $a_{it}=0$ for $t \neq i$,  and therefore $\lambda(x_i)=a_ix_i$ for every $i=1,\ldots, n$. 
\end{proof}

\begin{cor}\label{corollary3.3.2}
If $A$ and $\lambda \in End(A)$, in the situation of the Theorem \ref{theorem3.3.4}, and $\lambda$ is injective, then there are $\lambda_1,\ldots, \lambda_n \in R \setminus \{ 0\}$, such that
\begin{eqnarray}
\lambda(x_w)=\lambda_wx_w \text{ para cada }w=1,\ldots , n. \label{equation3.3.14}
\end{eqnarray}
In particular, if $\lambda \in Aut(A)$ then $\lambda$ has the form \eqref{equation3.3.14}.
\end{cor}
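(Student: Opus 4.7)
The plan is to deduce this corollary as a direct consequence of Theorem \ref{theorem3.3.4} once we observe that injectivity upgrades the three–index hypothesis to a full non–vanishing statement and forces each leading scalar to be nonzero.

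First I would note that the hypothesis ``$\lambda$ injective'' immediately gives $\lambda(x_w)\neq 0$ for every $w=1,\ldots,n$, since each $x_w$ is a nonzero element of $A$. In particular (with $n\geq 3$ as is implicit in the setting of the preceding theorem) there exist at least three indices $i,j,t$ with $\lambda(x_i),\lambda(x_j),\lambda(x_t)\neq 0$, so the hypotheses of Theorem \ref{theorem3.3.4} are satisfied. Applying that theorem, we obtain elements $\lambda_1,\ldots,\lambda_n\in R$ such that
\begin{eqnarray*}
\lambda(x_w)=\lambda_w x_w,\qquad w=1,\ldots,n.
\end{eqnarray*}

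Next I would argue that each $\lambda_w$ is in fact nonzero. Indeed, $Mon(A)$ is a left $R$–basis of $A$ by Definition \ref{definition1.1.1lezama}, so the equality $\lambda_w x_w = 0$ would force $\lambda_w=0$, and hence $\lambda(x_w)=0$, contradicting the injectivity of $\lambda$ (together with $x_w\neq 0$). Therefore $\lambda_w\in R\setminus\{0\}$ for every $w$, which is precisely the form \eqref{equation3.3.14}.

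For the final assertion, any $\lambda\in Aut(A)$ is in particular an injective endomorphism fixing $R$, so the previous paragraph applies verbatim and yields the description \eqref{equation3.3.14}. There is no real obstacle here: the substantive work has already been carried out in Theorem \ref{theorem3.3.4} (via localization to $S^{-1}A$ and reduction to Theorem \ref{theorem3.2.1}); the only point requiring care is invoking the uniqueness of representation in $Mon(A)$ to promote the $\lambda_w$ from $R$ to $R\setminus\{0\}$.
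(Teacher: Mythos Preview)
Your proof is correct and is exactly the intended argument; the paper states this corollary without proof, treating it as an immediate consequence of Theorem \ref{theorem3.3.4}, and your reasoning (injectivity forces every $\lambda(x_w)\neq 0$, apply the theorem, then use the $R$-basis $Mon(A)$ to conclude each $\lambda_w\in R\setminus\{0\}$) is precisely how one fills in the omitted details.
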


\begin{cor}
If $A$ is a skew $PBW$ extension over left Ore domain $R$ such that $c_{ij}$, $1\leq i < j \leq n$, are independent in $(S^{-1}R)^*/N$, and $\lambda$ is a filtered endomorphism, such that if $\lambda(x_i)\neq 0$, $\lambda(x_i) \notin F_0(A)$ then there exists $\lambda_{01},\ldots, \lambda_{0n},\lambda_1,\ldots, \lambda_n \in R$ such that
\begin{eqnarray}
\lambda(x_i)=\lambda_{0i}+\lambda_ix_i, \quad 1\leq i \leq n.
\end{eqnarray}
In particular, if $\lambda$ is injective, $\lambda_1,\ldots,\lambda_n \in R \setminus \{ 0\}$.
\end{cor}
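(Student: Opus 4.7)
The plan is to combine the filtered-graded reduction used in the proof of Theorem \ref{theorem3.2.3} with the Ore-localization technique of Theorem \ref{theorem3.3.4}. In other words, the graded ring will play the role that $A$ played in Theorem \ref{theorem3.2.3}, but with Theorem \ref{theorem3.3.4} substituted for Theorem \ref{theorem3.2.1} so that we do not need $A$ itself to be general.

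First, I would pass to the associated graded ring. By Theorem \ref{proposition1.2.1lezama}, $Gr(A)$ is a quasi-commutative skew $PBW$ extension of $R$ whose multiparameters $c_{ij}$ agree with those of $A$, since the relations in (\ref{equation1.1.3lezama}) contribute their top homogeneous part to $Gr(A)$. Because $\lambda$ is filtered, it induces a graded endomorphism $Gr(\lambda):Gr(A)\longrightarrow Gr(A)$ that fixes $R$. The hypothesis $\lambda(x_i)\notin F_0$ whenever $\lambda(x_i)\neq 0$ guarantees $Gr(\lambda)(x_i+F_0)\neq 0$ in the degree-one piece for exactly those indices where $\lambda(x_i)\neq 0$; in particular the triple of indices needed to invoke Theorem \ref{theorem3.3.4} carries over from $\lambda$ to $Gr(\lambda)$.

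Next, I would apply Theorem \ref{theorem3.3.4} to $Gr(A)$. Since the $c_{ij}$ of $Gr(A)$ coincide with those of $A$, the independence hypothesis in $(S^{-1}R)^*/N$ is inherited, so $S^{-1}Gr(A)$ is a general quasi-commutative extension of $S^{-1}R$. Theorem \ref{theorem3.3.4} then yields $\lambda_1,\dots,\lambda_n\in R$ with $Gr(\lambda)(x_i)=\lambda_i x_i$ (and $\lambda_i=0$ when $\lambda(x_i)=0$). Lifting back is immediate: the equality $\lambda(x_i)+F_0=\lambda_i x_i+F_0$ in $Gr(A)_1$ means $\lambda(x_i)-\lambda_i x_i\in F_0=R$, giving the desired form
\[
\lambda(x_i)=\lambda_{0i}+\lambda_i x_i,\qquad \lambda_{0i}\in R.
\]

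For the injective case, suppose $\lambda_i=0$ for some $i$. Then $\lambda(x_i)=\lambda_{0i}=\lambda(\lambda_{0i})$, so $\lambda(x_i-\lambda_{0i})=0$ with $x_i-\lambda_{0i}\neq 0$, contradicting injectivity; hence each $\lambda_i\in R\setminus\{0\}$. The main obstacle is the same one the authors flag just before Theorem \ref{theorem3.3.4}: confirming that the independence of the $c_{ij}$ in $(S^{-1}R)^*/N$ really is preserved when we replace $A$ by $Gr(A)$. Since the commutation constants are literally the same elements of $R^*$, this reduces to the hypothesis explicitly assumed in the corollary, and the rest of the argument is a clean composition of the two preceding techniques.
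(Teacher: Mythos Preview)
Your proof is correct and follows precisely the approach the paper intends: the corollary is stated without proof because it is the obvious splice of the filtered--graded reduction of Theorem~\ref{theorem3.2.3} with the Ore-localization argument of Theorem~\ref{theorem3.3.4}, exactly as you lay out. Your observation that the commutation constants of $Gr(A)$ coincide with those of $A$ (so the independence hypothesis transfers) and your injectivity argument via $\lambda(x_i-\lambda_{0i})=0$ are both the expected steps; note only that, as in Theorems~\ref{theorem3.2.3} and~\ref{theorem3.3.4}, the existence of at least three indices with $\lambda(x_i)\neq 0$ is tacitly assumed even though the corollary's statement omits it.
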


{\color{red}
\section{Skew quantum polynomials}
}

\begin{defn}[\cite{lezamalibro}]\label{quantumskew}
Let $R$ be a ring with a fixed matrix of parameters $Q:=(q_{ij}) \in Mat_n(R)$, $n \geq 2$, such that $q_{ii}=1=q_{ij}q_{ji}=q_{ji}q_{ij}$ for every $1\leq i,j \leq n$, and suppose also that it is given a system $\sigma_1,\ldots, \sigma_n$ of automorphisms of $R$. The ring of skew quantum polynomials over $R$, denoted by $R_{Q, \sigma}[x_1^{\pm 1}, \ldots, x_r^{\pm 1},x_{r+1}, \ldots, x_n]$ or $\mathcal{O}_{Q,\sigma}$, is defined as following:
\begin{enumerate}
\item $R \subseteq R_{Q, \sigma}[x_1^{\pm 1}, \ldots, x_r^{\pm 1},x_{r+1}, \ldots, x_n] $.
\item $R_{Q, \sigma}[x_1^{\pm 1}, \ldots, x_r^{\pm 1},x_{r+1}, \ldots, x_n]$ is a free left $R-$module with basis
\begin{eqnarray}
\{ x_1^{t_1} \cdots x_n^{t_n}: t_i \in \mathbb{Z} \text{ for }1\leq i \leq r \text{ and }t_i \in \mathbb{N} \text{ for }r+1 \leq i \leq n\}.
\end{eqnarray}
\item The elements $x_1,\ldots, x_n$ satisfy the defining relations
\begin{eqnarray}
x_ix_i^{-1}=& 1 &=x_i^{-1}x_i, \quad 1\leq i \leq r, \nonumber \\
x_jx_i&=& q_{ji}x_ix_j, \quad 1\leq i,j ,\leq n, \nonumber \\
x_ir &=& \sigma_i(r)x_i,\quad r \in R, \quad 1\leq i,j \leq n. \nonumber
\end{eqnarray}
\end{enumerate}
\end{defn}
$R_{Q, \sigma}[x_1^{\pm 1}, \ldots, x_r^{\pm 1},x_{r+1}, \ldots, x_n]$ can be viewed as a localization of a skew $PBW$ extension. In fact, we have the quasi-commutative bijective skew $PBW$ extension
\begin{eqnarray*}
A:=\sigma(R)\langle x_1,\ldots, x_n \rangle,
\end{eqnarray*}
with $x_ir =\sigma_i(r)x_i$ and $x_jx_i=q_{ji}x_ix_j$, $1\leq i,j \leq n$; if we set
\begin{eqnarray*}
S:=\{ rx^t:r \in R^*,x^t \in Mon\{ x_1,\ldots, x_r\}\},
\end{eqnarray*}
then $S$ is a multiplicative subset of $A$ and
\begin{eqnarray*}
S^{-1}A \cong R_{Q,\sigma}[x_1^{\pm 1}, \ldots, x_{r}^{\pm 1},x_{r+1}, \ldots, x_n].
\end{eqnarray*}

We define $N$ as the subgroup of the multiplicative group $R^*$ generated by $[R^*, R^*]$ and the elements form $z^{-1}\sigma_i(z)$ for $z \in R^*$ and $i=1, \ldots, n$. This subgroup is normal in $R^*$ and $R^*/N$ is abelian group.

\begin{prop}\label{proposition5.2.2}
If $\mathcal{O}_{Q,\sigma}$ is a skew  quantum polynomials over $R$, then
\begin{enumerate}
\item $x_i^{-1}r=\sigma_i^{-1}(r)x_i^{-1}$ for every $r \in R$ and $1\leq i \leq r$.
\item $x_ix_j^{-1}=\sigma_j^{-1}(q_{ij}^{-1})x_j^{-1}x_i$ for $1\leq i \leq n$ and $1\leq j \leq r$.
\item $\sigma_i^n(z)=zz^{-1}\sigma_i(z)\sigma_i(z)^{-1}\cdots \sigma_i^{n-1}(z)^{-1}\sigma_i^n(z)$ for all $n\in \mathbb{N}$ and $z \in R^*$, moreover $\sigma_i(z)=z \cdot d$ with $d \in N$.
\item $\sigma_i^{-n}(z)=\sigma_i^{-n}(z)\sigma_i^{-n+1}(z)^{-1}\sigma_i^{-n+1}(z)\cdots \sigma_i^{-1}(z)^{-1}\sigma_i^{-1}(z)z^{-1}z$ for $n\in \mathbb{N}$ and $z\in R^*$. moreover $\sigma_i^{-n}(z)=z \cdot u$ with $u \in N$.
\item $x_ix_j^n=\left(q_{ij}^n \cdot d \right)x_j^nx_i$, with $d \in N$ and for every $n \in \mathbb{Z}$.
\item $x_i^tx_j^s=\left( q_{ij}^{ts} \cdot u \right)x_j^sx_i^t$ for some $u \in N$, and for every $t,s \in \mathbb{Z}$.
\item $(x_1^{t_1}\cdots x_n^{t_n})(x_1^{s_1}\cdots x_n^{s_n})=\left( \prod\limits_{i<j}q_{ji}^{t_js_i}\cdot u \right) x_1^{t_1+s_1}+\cdots x_n^{t_n+s_n}$, for some $u \in N$.
\end{enumerate}
\end{prop}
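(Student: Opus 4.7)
The plan is to establish the seven items essentially in the order listed, since each later item depends on the earlier ones, and to treat the negative-exponent versions as extensions of Proposition~\ref{conmutacion}. This is the natural analogue of \texttt{conmutacion} but in the localised ring $\mathcal{O}_{Q,\sigma}$, so most of the arithmetic of $N$ will carry over; the only real subtlety is passing from $\sigma_i$ to $\sigma_i^{-1}$.

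First I would derive (1) and (2) directly from the defining relations. For (1), starting from $x_i r = \sigma_i(r)x_i$ with $r$ replaced by $\sigma_i^{-1}(s)$ gives $x_i \sigma_i^{-1}(s) = s x_i$; multiplying on the left and right by $x_i^{-1}$ yields $x_i^{-1} s = \sigma_i^{-1}(s) x_i^{-1}$. For (2), starting from $x_i x_j = q_{ij} x_j x_i$ (consequence of the defining relation $x_j x_i = q_{ji} x_i x_j$ together with $q_{ij}q_{ji}=1$), multiplying on the right by $x_j^{-1}$ and then using (1) to move $q_{ij}$ past $x_j^{-1}$ produces $x_i x_j^{-1} = \sigma_j^{-1}(q_{ij}^{-1}) x_j^{-1} x_i$.

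Next I would handle the telescoping identities (3) and (4). For (3), the identity is formally trivial; what matters is that each factor $\sigma_i^{k-1}(z)^{-1}\sigma_i^k(z)$ is of the shape $w^{-1}\sigma_i(w)$ with $w = \sigma_i^{k-1}(z) \in R^*$, hence lies in $N$, so the whole product modulo $z$ lies in $N$. For (4) the only real point is to show that $z^{-1}\sigma_i^{-1}(z) \in N$: setting $w := \sigma_i^{-1}(z) \in R^*$ one has $w^{-1}\sigma_i(w) = \sigma_i^{-1}(z)^{-1} z \in N$, and since $N$ is a subgroup its inverse $z^{-1}\sigma_i^{-1}(z)$ is also in $N$. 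Iterating this, each telescoping factor $\sigma_i^{-k}(z)^{-1}\sigma_i^{-k+1}(z)$ lies in $N$, which gives $\sigma_i^{-n}(z) \equiv z \pmod{N}$.

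Finally for (5)--(7) I would induct on the absolute values of the exponents, exactly mirroring the proof of Proposition~\ref{conmutacion}, but splitting by sign. The nonnegative case is already \texttt{conmutacion}; for the negative case of (5) I would iterate (2), getting
\begin{eqnarray*}
x_i x_j^{-m} = \sigma_j^{-1}(q_{ij}^{-1})\sigma_j^{-2}(q_{ij}^{-1})\cdots \sigma_j^{-m}(q_{ij}^{-1})\, x_j^{-m} x_i,
\end{eqnarray*}
and then applying (4) to reduce each $\sigma_j^{-k}(q_{ij}^{-1})$ to $q_{ij}^{-1}$ modulo $N$, so that the full coefficient is congruent to $q_{ij}^{-m}=q_{ij}^{n}$ modulo $N$. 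Item (6) is then obtained by a second induction on $t$, commuting $x_j^s$ past each $x_i$ and absorbing the resulting $\sigma_i^{\pm}$ factors into $N$ via (3)--(4); item (7) follows from (6) by sweeping $x_i^{t_i}$ past $x_j^{s_j}$ for $i<j$, exactly as in \texttt{conmutacion}(5). The main obstacle is bookkeeping the $N$-coefficients through the sign-dependent iterations; once $z^{-1}\sigma_i^{-1}(z) \in N$ is established in step two, nothing essentially new happens and the rest is a mechanical extension of the quasi-commutative case.
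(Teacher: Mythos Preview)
Your proposal is correct and is precisely the approach the paper intends: the paper's own proof consists of the single sentence ``The proof is similar to the Proposition~\ref{conmutacion},'' and what you have written is exactly that similarity spelled out, with the extra ingredients (items (1), (2), (4)) needed to handle inverses $x_j^{-1}$ and $\sigma_i^{-1}$. In fact your write-up is considerably more detailed than the paper's, and the key observation you isolate---that $z^{-1}\sigma_i^{-1}(z)\in N$ because it is the inverse of an element $w^{-1}\sigma_i(w)$---is the only point not already contained verbatim in Proposition~\ref{conmutacion}.
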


\begin{proof}
The proof is similar to the Proposition \ref{conmutacion}.
\end{proof}
\begin{defn}

A ring of skew quantum polynomials $\mathcal{O}_{Q,\sigma}$ of $R$, is general when the multiparameters $q_{ij}$, $1\leq i < j \leq n$, are independent in $R^*/N$.
\end{defn}

{
\color{red}
\subsection{Automorphisms for the  skew quantum polynomial rings.}
}
We understand $End(\mathcal{O}_{Q,\sigma})$ as the ring endomorphisms of $\mathcal{O}_{Q,\sigma}$ acting identically on $R$, and $Aut(\mathcal{O}_{Q,\sigma})$ is the set of the elements invertible in $End(\mathcal{O}_{Q,\sigma})$.\\

\begin{thm}
Let $\mathcal{O}:=\mathcal{O}_{Q,\sigma}$ be the general ring of skew quantum polynomials of $R$ with $r=0$, $n\geq 3$ and $\lambda \in End(\mathcal{O})$. If for every $1\leq i \leq n$, $\lambda(x_i)\neq 0$, and the leading coefficient is invertible, then $\lambda$ is an automorphism of $\mathcal{O}$.
\end{thm}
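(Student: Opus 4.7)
The plan is to recognize this as a direct specialization of the quasi-commutative results already established, and to feed the hypotheses into Theorem \ref{theorem3.2.1} and its corollary. With $r=0$, the skew quantum polynomial ring $\mathcal{O}_{Q,\sigma}$ has no inverted generators, so the defining relations $x_jx_i = q_{ji}x_ix_j$ and $x_ir = \sigma_i(r)x_i$ together with the free $R$-module basis $\mathrm{Mon}\{x_1,\ldots,x_n\}$ exhibit $\mathcal{O}$ directly as the quasi-commutative bijective skew $PBW$ extension $\sigma(R)\langle x_1,\ldots,x_n\rangle$ (no localization is needed because $S = \{1\}$ when $r=0$). The "general" hypothesis on $\mathcal{O}$ matches the general condition in the skew $PBW$ framework since the multiplicative subgroup $N$ coincides in both definitions.

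Next I would verify the three hypotheses of Theorem \ref{theorem3.2.1}. First, $\mathcal{O}$ is a general quasi-commutative skew $PBW$ extension, which we just noted. Second, $\lambda \in \mathrm{End}(\mathcal{O})$ by assumption, and the leading coefficient of each $\lambda(x_i)$ is invertible by hypothesis. Third, since $n \geq 3$ and every $\lambda(x_i)$ is nonzero, the "at least three indices $i,j,u$" condition is automatically satisfied (we can pick any three of the $n$ generators). Applying Theorem \ref{theorem3.2.1} therefore yields elements $\lambda_1,\ldots,\lambda_n \in R$ with
\begin{eqnarray*}
\lambda(x_w) = \lambda_w x_w, \qquad w=1,\ldots,n.
\end{eqnarray*}
Because the leading coefficient of $\lambda(x_w)$ equals $\lambda_w$ and is invertible by hypothesis, we actually have $\lambda_w \in R^*$ for every $w$.

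Finally, to upgrade from "monomial-scaling endomorphism" to "automorphism," I would invoke the corollary immediately following Theorem \ref{theorem3.2.1} (which produces the inverse explicitly by observing that $\lambda$ sends $x_1^{l_1}\cdots x_n^{l_n}$ to $d\cdot x_1^{l_1}\cdots x_n^{l_n}$ for some $d \in R^*$, via Proposition \ref{conmutacion}(\ref{item5}), so $\lambda$ is both injective and surjective on the free $R$-basis). This delivers the conclusion that $\lambda \in \mathrm{Aut}(\mathcal{O})$. There is essentially no obstacle: the entire argument is a matter of checking that the skew quantum polynomial ring with $r=0$ sits inside the skew $PBW$ machinery already built. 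The only mildly delicate point is confirming that the subgroup $N$ used in the "general" hypothesis of the quantum polynomial setting agrees with the one in the skew $PBW$ setting, which is immediate from comparing Definition \ref{quantumskew} and the definition of $N$ right after Proposition \ref{proposition1.1.3lezama}.
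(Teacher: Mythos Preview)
Your proposal is correct and follows exactly the paper's approach: the paper's proof is the single line ``This is a particular case of the Theorem \ref{theorem3.2.1},'' and you have simply unpacked that remark by verifying that the $r=0$ skew quantum polynomial ring is precisely a general quasi-commutative skew $PBW$ extension and then invoking Theorem \ref{theorem3.2.1} together with its corollary. The only quibble is that when $r=0$ the multiplicative set $S=\{rx^t : r\in R^*,\ x^t\in \mathrm{Mon}\{x_1,\ldots,x_r\}\}$ is $R^*$ rather than $\{1\}$, but since these elements are already invertible the localization is still trivial, so your conclusion is unaffected.
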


\begin{proof}
This is a particular case of the Theorem  \ref{theorem3.2.1}.
\end{proof}

\begin{thm}\label{theorem4.3.2}
Suppose that $\lambda \in End(\mathcal{O})$ and
\begin{enumerate}
\item for every $\lambda(x_i)\neq 0$, the leading and smallest terms are invertible,
\item there are at least three indices $1\leq i,j,t \leq n$ such that $\lambda(x_i),\lambda(x_j),\lambda(x_t)\neq 0$.
\end{enumerate}
Then there are elements $\lambda_1,\ldots, \lambda_n \in R$ and $\epsilon= \pm 1$ such that $\lambda_1,\ldots, \lambda_r \neq 0$ and
\begin{eqnarray}
\lambda(x_w)=\lambda_wx_w^\epsilon \quad w=1,\ldots, n.
\end{eqnarray}
\end{thm}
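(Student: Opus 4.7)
The plan is to mimic the proof of Theorem \ref{theorem2.1-2001} in the present setting, replacing the multiplication rules for the quantum polynomial ring with their skew analogues provided by Proposition \ref{proposition5.2.2}, and paying extra attention to the fact that the exponents of $x_1,\ldots,x_r$ now range over $\mathbb{Z}$ rather than $\mathbb{N}$.

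First I would fix the lexicographic order on the $R$-basis $\{x^t : t\in\mathbb{Z}^r\times\mathbb{N}^{n-r}\}$ of $\mathcal{O}$. For any two indices $i\neq j$ with $\lambda(x_i),\lambda(x_j)\neq 0$, write their leading terms as $\lambda_i x^l$ and $\lambda_j x^t$, with $\lambda_i,\lambda_j\in R^*$ by hypothesis. The defining relation $x_ix_j=q_{ij}x_jx_i$ gives $\lambda(x_i)\lambda(x_j)=q_{ij}\lambda(x_j)\lambda(x_i)$, and invertibility of the leading coefficients lets me equate leading monomials on both sides; the multiplication formulas in Proposition \ref{proposition5.2.2} then yield, after absorbing all $\sigma$-terms into $N$ via items (3) and (4),
$$\lambda_i\lambda_j\,\prod_{s<r}q_{rs}^{l_rt_s}\;\equiv\;q_{ij}\,\lambda_i\lambda_j\,\prod_{s<r}q_{rs}^{t_rl_s}\pmod{N}.$$
Assuming $i>j$ and invoking the generality of $\mathcal{O}$, I obtain $l_rt_s=\delta_{ri}\delta_{sj}+t_rl_s$ for every $r\neq s$. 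The combinatorial argument from the proof of Theorem \ref{theorem2.1-2001} then shows $l_p=t_p=0$ for $p\neq i,j$, and, after bringing in the third index $u$, reduces the leading terms of $\lambda(x_i),\lambda(x_j),\lambda(x_u)$ to $\lambda_ix_i^{\epsilon}$, $\lambda_jx_j^{\epsilon}$, $\lambda_ux_u^{\epsilon}$ for a common integer $\epsilon\in\{\pm 1\}$. Pairing any further index $w$ with $\lambda(x_w)\neq 0$ against two of $\{i,j,u\}$ gives the leading term of $\lambda(x_w)$ as $\lambda_wx_w^{\epsilon}$.

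Repeating the whole analysis for the smallest monomials — which are well defined since every nonzero $\lambda(x_w)$ has finite support and whose coefficients are invertible by hypothesis — I would obtain an integer $\epsilon'\in\{\pm 1\}$ such that the smallest term of each $\lambda(x_w)\neq 0$ is $\lambda_w'x_w^{\epsilon'}$. The main obstacle is forcing $\epsilon=\epsilon'$: if some $w>r$ has $\lambda(x_w)\neq 0$ then the exponents of $x_w$ are necessarily non-negative, so $\epsilon=\epsilon'=+1$; otherwise every $\lambda(x_w)\neq 0$ has $w\leq r$, in which case $x_w\in\mathcal{O}^*$ forces $\lambda(x_w)\in\mathcal{O}^*$, and if $\epsilon\neq\epsilon'$ then $\lambda(x_w)=\lambda_w'x_w^{-1}+(\text{intermediate terms})+\lambda_wx_w$ with $\lambda_w',\lambda_w\in R^*$, which is the kind of element that cannot be a unit in $\mathcal{O}$ (the standard obstruction already used in Artamonov's original argument), giving a contradiction. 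Therefore $\epsilon=\epsilon'$, so leading and smallest terms coincide and $\lambda(x_w)=\lambda_wx_w^{\epsilon}$; invertibility of $\lambda(x_w)$ for $w\leq r$ forces $\lambda_1,\ldots,\lambda_r\in R^*$, completing the proof.
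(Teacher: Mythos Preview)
Your proposal is correct and follows exactly the approach the paper intends: the paper's own proof is simply ``Similar to Theorem \ref{theorem2.1-2001}'', and what you have written is precisely that argument, with the multiplication rules of Proposition \ref{conmutacion} replaced by their skew-quantum analogues in Proposition \ref{proposition5.2.2} and with the extra hypothesis on invertibility of leading and smallest coefficients used where the division-ring case got this for free. Your handling of the $\epsilon=\epsilon'$ step (split into the cases $w>r$ and all relevant $w\leq r$) matches the original argument as well.
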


\begin{proof}
Similar to Theorem \ref{theorem2.1-2001}. 
\end{proof}

\begin{cor}
If $\lambda(x_{r+1}), \ldots ,\lambda(x_n) \neq 0$, in the situation of the theorem \ref{theorem4.3.2}, then $\lambda$ is a automorphism of $\mathcal{O}$. In particular every injective endomorphism of $\mathcal{O}$ is a automorphism.
\end{cor}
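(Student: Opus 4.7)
The plan is to combine Theorem \ref{theorem4.3.2} with the multiplication rules of Proposition \ref{proposition5.2.2} in order to exhibit $\lambda$ as acting diagonally (up to an invertible scalar) on the basis of $\mathcal{O}$, and then read off both injectivity and surjectivity from the uniqueness of the expression of elements in that basis.

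First I would invoke Theorem \ref{theorem4.3.2} to obtain $\lambda_1, \ldots, \lambda_n \in R$ and $\epsilon = \pm 1$ with $\lambda(x_w) = \lambda_w x_w^\epsilon$, for $w = 1, \ldots, n$. Hypothesis (1) of that theorem (leading and smallest terms invertible whenever $\lambda(x_w) \neq 0$) guarantees every $\lambda_w \in R^*$. The extra assumption $\lambda(x_{r+1}), \ldots, \lambda(x_n) \neq 0$ of the present corollary then forces $\epsilon = 1$, because for $w > r$ the element $x_w^{-1}$ does not belong to $\mathcal{O}$, so $\lambda_w x_w^{-1}$ would not be an admissible image; this is the exact analogue of the remark following Theorem \ref{theorem2.1-2001}.

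Next, I would compute $\lambda$ on an arbitrary basis monomial $x^t = x_1^{t_1}\cdots x_n^{t_n}$ (with $t_i \in \mathbb{Z}$ for $i \leq r$ and $t_i \in \mathbb{N}$ otherwise). Since $\lambda$ is a ring homomorphism, $\lambda(x^t) = (\lambda_1 x_1)^{t_1}\cdots(\lambda_n x_n)^{t_n}$. Using $x_i r = \sigma_i(r) x_i$ repeatedly, together with the congruences $\sigma_i^m(z) \equiv z \pmod{N}$ from parts (3)--(4) of Proposition \ref{proposition5.2.2} and the cross-variable formula of part (7), I collect the scalars to the left. The result is an identity of the form $\lambda(x^t) = c_t\, x^t$, where $c_t$ equals $\prod_i \lambda_i^{t_i}$ times an element of $N$. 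Since each $\lambda_i \in R^*$ and $N \subseteq R^*$, we obtain $c_t \in R^*$.

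Finally, given any $p = \sum_t a_t x^t \in \mathcal{O}$, we have $\lambda(p) = \sum_t a_t c_t x^t$. Uniqueness of the basis representation makes injectivity immediate: $\lambda(p)=0$ gives $a_t c_t = 0$ and hence $a_t = 0$ for every $t$. For surjectivity, given $q = \sum_t b_t x^t$, the element $p' = \sum_t b_t c_t^{-1} x^t$ lies in $\mathcal{O}$ and satisfies $\lambda(p') = q$. Consequently $\lambda$ is an automorphism, and in particular any injective endomorphism of $\mathcal{O}$ is automatically an automorphism.

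The step that requires most care is handling the negative exponents ($t_i < 0$ for $i \leq r$), which is the genuinely new feature compared with the analogous corollary after Theorem \ref{theorem3.2.1}. The key observation is that $\lambda(x_i^{-1}) = \lambda(x_i)^{-1} = (\lambda_i x_i)^{-1} = x_i^{-1}\lambda_i^{-1} = \sigma_i^{-1}(\lambda_i^{-1})x_i^{-1}$, and by Proposition \ref{proposition5.2.2}(1) and (4) the scalar $\sigma_i^{-1}(\lambda_i^{-1})$ still lies in $R^*$. Once this is established, a straightforward induction on $|t_i|$ together with the multiplication rules extends the identity $\lambda(x^t) = c_t x^t$, $c_t \in R^*$, to all indices $t$ appearing in the basis of $\mathcal{O}$, after which the injectivity/surjectivity argument closes the proof.
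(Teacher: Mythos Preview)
The paper does not supply its own proof of this corollary; it is stated without argument, in parallel with the unproved corollary after Theorem~\ref{theorem2.1-2001}. The intended justification is precisely what you wrote: transplant the proof of the corollary following Theorem~\ref{theorem3.2.1} to $\mathcal{O}$, using Proposition~\ref{proposition5.2.2} in place of Proposition~\ref{conmutacion} and taking care of negative exponents for $i\le r$. Your treatment of $\lambda(x_i^{-1})=\sigma_i^{-1}(\lambda_i^{-1})x_i^{-1}$ and the ensuing diagonal action $\lambda(x^t)=c_t x^t$ with $c_t\in R^*$ is exactly the missing computation, and the injectivity/surjectivity step is identical to the earlier one.

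One small edge case: your deduction that $\epsilon=1$ uses the existence of some $w>r$ with $\lambda(x_w)\neq 0$, i.e.\ it tacitly assumes $r<n$. When $r=n$ the corollary's extra hypothesis is vacuous and $\epsilon=-1$ is still allowed; then the same computation yields $\lambda(x^t)=c_t\,x^{-t}$ with $c_t\in R^*$, and the basis argument goes through with the index change $t\mapsto -t$ (a bijection of $\mathbb{Z}^n$). This is a cosmetic adjustment, not a genuine gap, and the paper's own statement of the remark after Theorem~\ref{theorem2.1-2001} already flags the $r<n$ proviso.
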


{\color{red}
\subsection{Automorphisms of quantum skew polynomial rings over Ore domains.}
}
We consider $\mathcal{O}_{Q,\sigma}$ y $S:=R \setminus \{ 0\}$, where $R$ is an Ore domain, we want to see that
\[ S^{-1}\mathcal{O}_{Q,\sigma}\cong (S^{-1}R)_{Q,\overline{\sigma}}[x_1^{\pm 1},\ldots, x_r^{\pm 1}, x_{r+1},\ldots, x_n] ,\]

where $\overline{\sigma_i}: S^{-1}R \longrightarrow S^{-1}R$ is given by universal property of $S^{-1}R$.
\begin{eqnarray}
\begin{diagram}
\node{R} \arrow{s,l}{\sigma_i} \arrow[2]{e,t}{\psi} \node{} \node{S^{-1}R} \arrow[2]{sw,r,..}{\overline{\sigma_i}}\\
\node{R}\arrow{s,l}{\psi} \\
\node{S^{-1}R}
\end{diagram},
\end{eqnarray}
Furthermore, as $\psi \sigma_i$ is injective, then $\overline{\sigma_i}$ is an injective endomorphism. Lets see that it is surjective too. Consider $\frac{a}{s} \in S^{-1}R$, there are $a' \in R$ and $s' \in S$, such that $\sigma_i(a')=a$ and $\sigma_i(s')=s$, so
\begin{eqnarray}
\overline{\sigma_i}(\frac{a'}{s'})=\frac{a}{s}.
\end{eqnarray}

Let us now that $S^{-1}\mathcal{O}_{Q,\sigma}\cong (S^{-1}R)_{Q,\overline{\sigma}}[x_1^{\pm 1},\ldots,x_r^{\pm 1},x_{r+1},\ldots,x_n]$. We consider the ring homomorphism

\begin{eqnarray}
\varphi: &\mathcal{O}_{Q,\sigma}& \longrightarrow (S^{-1}R)_{Q,\overline{\sigma}}[x_1^{\pm 1},\ldots,x_r^{\pm 1},x_{r+1},\ldots,x_n]\nonumber \\
&\sum a_tx^t & \longmapsto \sum \frac{a_t}{1}x^t,
\end{eqnarray}

\begin{enumerate}

\item If $r \in R \setminus \{ 0\}$, then  $\varphi(r)=\frac{r}{1} $ is invertible, therefore \[ \varphi(S) \subseteq  ((S^{-1}R)_{Q,\overline{\sigma}}[x_1^{\pm 1},\ldots,x_r^{\pm 1},x_{r+1},\ldots,x_n])^* \]

\item If $f:=\sum a_tx^t \in \mathcal{O}_{Q,\sigma}$ and $\varphi(f)=0$ using the unique representation of the elements in $\mathcal{O}_{Q,\overline{\sigma}}$ we conclude that $\frac{a_t}{1}=0$, for every $t \in \mathbb{Z}^r \times \mathbb{N}^{n-r}$ in the representation of $f$, since $R$ is a domain $a_t=0$, hence $f=0$. Conversely, if $s\cdot f=0$ for some $s \in S$, then $f=0$, and therefore, $\varphi(f)=0$. Moreover, $\varphi$ is injective.

\item Let $f:=\frac{a_1}{s_1}x^{t_1}+\cdots \frac{a_n}{s_n}x^{t_n} \in (S^{-1}R)_{Q,\overline{\sigma}}[x_1^{\pm 1},\ldots,x_r^{\pm 1},x_{r+1},\ldots,x_n]$. There are $s \in S$ and $b_1, \ldots,b_n \in R$ such that $\frac{b_i}{s}=\frac{a_i}{s_i}$, $i=1,\ldots, n$. Therefore
\begin{eqnarray}
f &=& \frac{b_1}{s}x^{t_1}+\cdots \frac{b_n}{s}x^{t_n}\nonumber \\
&=& (\frac{s}{1})^{-1}(\frac{b_1}{1}x^{t_1}+\cdots \frac{b_n}{1}x^{t_n})\nonumber \\
&=& \varphi(s)^{-1}\varphi(b_1x^{t_1}+\cdots b_nx^{t_n})\nonumber.
\end{eqnarray}
So, any element of $(S^{-1}R)_{Q,\overline{\sigma}}[x_1^{\pm 1},\ldots,x_r^{\pm 1},x_{r+1},\ldots,x_n]$ can be written in the form $\varphi(s)^{-1}\varphi(g)$ for some $s \in S$ and $g \in \mathcal{O}_{Q,\sigma}$.
\\

We conclude that $S^{-1}\mathcal{O}_{Q,\sigma}$ exist and \[S^{-1}\mathcal{O}_{Q,\sigma}\cong \mathcal{O}_{Q,\overline{\sigma}}:=(S^{-1}R)_{Q,\overline{\sigma}}[x_1^{\pm 1},\ldots,x_r^{\pm 1},x_{r+1},\ldots,x_n]\]
\end{enumerate}

\begin{lem}\label{lema4.4.4}
Let $\lambda \in End(\mathcal{O}_{Q,\sigma})$ be, then there is $\overline{\lambda} \in End(\mathcal{O}_{Q,\overline{\sigma}})$, i.e.  $\overline{\lambda}(\frac{a}{s})=\frac{a}{s}$ for every $\frac{a}{s} \in S^{-1}R$.
\end{lem}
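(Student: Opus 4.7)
The plan is to construct $\overline{\lambda}$ by invoking the universal property of the Ore localization $S^{-1}\mathcal{O}_{Q,\sigma}$, which was just established to be isomorphic to $\mathcal{O}_{Q,\overline{\sigma}}$ via the map $\varphi$.

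First I would consider the composition $\varphi \circ \lambda : \mathcal{O}_{Q,\sigma} \longrightarrow \mathcal{O}_{Q,\overline{\sigma}}$. To apply the universal property I need to check that $(\varphi \circ \lambda)(S)$ lies in the group of units of $\mathcal{O}_{Q,\overline{\sigma}}$. Since by hypothesis $\lambda \in End(\mathcal{O}_{Q,\sigma})$ acts identically on $R$, for every $s \in S = R \setminus \{0\}$ we have $\lambda(s)=s$, hence $(\varphi \circ \lambda)(s) = \varphi(s) = s/1$, which is invertible in $S^{-1}\mathcal{O}_{Q,\sigma} \cong \mathcal{O}_{Q,\overline{\sigma}}$ by construction of the localization. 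Hence the universal property of $S^{-1}\mathcal{O}_{Q,\sigma}$ produces a unique ring homomorphism $\overline{\lambda}: \mathcal{O}_{Q,\overline{\sigma}} \longrightarrow \mathcal{O}_{Q,\overline{\sigma}}$ such that $\overline{\lambda} \circ \varphi = \varphi \circ \lambda$.

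Next I would verify that $\overline{\lambda}$ fixes $S^{-1}R$ pointwise, so it indeed lies in $End(\mathcal{O}_{Q,\overline{\sigma}})$ in the sense defined in the paper. For any $a/s \in S^{-1}R$ with $a \in R$ and $s \in S$, write $a/s = \varphi(a)\,\varphi(s)^{-1}$. Then, using that $\overline{\lambda}$ is a ring homomorphism together with the commutation $\overline{\lambda}\circ\varphi = \varphi \circ \lambda$ and the fact that $\lambda$ is the identity on $R$,
\begin{equation*}
\overline{\lambda}\!\left(\frac{a}{s}\right) = \overline{\lambda}(\varphi(a))\,\overline{\lambda}(\varphi(s))^{-1} = \varphi(\lambda(a))\,\varphi(\lambda(s))^{-1} = \varphi(a)\,\varphi(s)^{-1} = \frac{a}{s}.
\end{equation*}
This gives exactly the required property, and the endomorphism $\overline{\lambda}$ extends $\lambda$ in the sense that its restriction to (the image of) $\mathcal{O}_{Q,\sigma}$ agrees with $\lambda$.

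There is no real obstacle here: the work already done in the preceding paragraphs (existence of $S^{-1}\mathcal{O}_{Q,\sigma}$ and its identification with $\mathcal{O}_{Q,\overline{\sigma}}$ through the injective map $\varphi$) reduces the statement to a direct invocation of the universal property of localization. The only point requiring a line of care is the verification that $\lambda(S) \subseteq S$, which is immediate because $\lambda$ fixes $R$ elementwise, so sending $s \in S$ through $\varphi\circ\lambda$ lands on a unit and legitimizes the universal factorization.
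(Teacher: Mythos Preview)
Your proof is correct and follows essentially the same approach as the paper: both construct $\overline{\lambda}$ by applying the universal property of the localization to the composite $\varphi\circ\lambda$, after noting that $(\varphi\circ\lambda)(S)$ consists of units because $\lambda$ fixes $R$, and then verify that $\overline{\lambda}$ fixes $S^{-1}R$ by a direct computation using $\overline{\lambda}\circ\varphi=\varphi\circ\lambda$. The only cosmetic difference is that the paper writes $\frac{a}{s}=\frac{1}{s}\cdot\frac{a}{1}$ (left fraction) whereas you write $\varphi(a)\varphi(s)^{-1}$; either form yields the same conclusion since $\overline{\lambda}$ fixes both $\varphi(a)$ and $\varphi(s)$.
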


\begin{proof}
Consider the homomorphism $\varphi \lambda: \mathcal{O}_{Q,\sigma} \longrightarrow \mathcal{O}_{Q,\overline{\sigma}}$, that satisfy $\varphi \lambda (S) \subseteq (\mathcal{O}_{Q,\overline{\sigma}})^*$. Then there is $\overline{\lambda} \in End(\mathcal{O}_{Q, \overline{\sigma}})$ such that $\varphi \lambda = \overline{\lambda} \varphi$.
\\

Now,
\begin{eqnarray}
\overline{\lambda}(\frac{a}{s})&=& \overline{\lambda}(\frac{1}{s}\frac{a}{1}) \nonumber \\
&=& \overline{\lambda}(\varphi(s))^{-1}\overline{\lambda}(\varphi(a)) \nonumber \\
&=& \varphi(\lambda(s))^{-1}\varphi(\lambda(a)) \nonumber \\
&=& \frac{a}{s},
\end{eqnarray}
thereby terminating the proof.
\end{proof}

\begin{thm}\label{theorem4.4.5}
Let $\mathcal{O}_{Q,\sigma}$ be a skew quantum polynomials ring over a Ore domain $R$, such that $q_{ij}$, $1\leq i < j \leq n$, are independent in $(S^{-1}R)^*/N$. If $\lambda \in End(\mathcal{O}_{Q, \sigma})$ and there are at least three indices $i,j,u$ with $\lambda(x_i),\lambda(x_j),\lambda(x_u) \neq 0$, then there are $\lambda_1,\ldots, \lambda_n \in R$ such that $\lambda(x_w)=\lambda_wx_w^{ \epsilon}$ for $w=1,\ldots, n$ and $\epsilon=\pm 1$.

\end{thm}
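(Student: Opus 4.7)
The strategy is to localize at $S = R \setminus \{0\}$, transferring the problem to the ring $\mathcal{O}_{Q,\overline{\sigma}} := (S^{-1}R)_{Q,\overline{\sigma}}[x_1^{\pm 1}, \ldots, x_r^{\pm 1}, x_{r+1}, \ldots, x_n]$, which under the hypothesis will be a general ring of skew quantum polynomials over the division ring $S^{-1}R$. I would then apply Theorem \ref{theorem4.3.2} to the induced endomorphism and descend the resulting normal form back to $\mathcal{O}_{Q,\sigma}$ using injectivity of the canonical map $\varphi$.

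In detail, I first invoke Lemma \ref{lema4.4.4} to obtain $\overline{\lambda} \in End(\mathcal{O}_{Q,\overline{\sigma}})$ satisfying $\overline{\lambda}\circ\varphi = \varphi \circ \lambda$. Since $R$ is an Ore domain, $S^{-1}R$ is a division ring, and $\varphi$ is injective (as established in the discussion preceding Lemma \ref{lema4.4.4}). Hence the hypothesis $\lambda(x_i),\lambda(x_j),\lambda(x_u) \neq 0$ transfers to $\overline{\lambda}(x_i),\overline{\lambda}(x_j),\overline{\lambda}(x_u) \neq 0$. The hypothesis that the $q_{ij}$ are independent in $(S^{-1}R)^*/N$ says exactly that $\mathcal{O}_{Q,\overline{\sigma}}$ is a general ring of skew quantum polynomials. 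Moreover, in a division ring every nonzero element is invertible, so the hypothesis on invertibility of leading and smallest terms required by Theorem \ref{theorem4.3.2} is satisfied automatically for $\overline{\lambda}(x_w)$ whenever it is nonzero. Theorem \ref{theorem4.3.2} then yields an $\epsilon = \pm 1$ and elements $\mu_1,\ldots,\mu_n \in S^{-1}R$ such that $\overline{\lambda}(x_w) = \mu_w x_w^{\epsilon}$ for every $w$.

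To descend, write $\lambda(x_w) = \sum_t a_{wt} x^t$ with $a_{wt} \in R$. Then the identity $\sum_t (a_{wt}/1)\,x^t = \mu_w x_w^{\epsilon}$ in $\mathcal{O}_{Q,\overline{\sigma}}$, combined with uniqueness of representation (part of Definition \ref{quantumskew}), forces $a_{wt} = 0$ for every $t$ different from the exponent having $\epsilon$ in position $w$ and $0$ elsewhere. The surviving coefficient is $\mu_w = a_w/1$ for some $a_w \in R$, and setting $\lambda_w := a_w$ gives $\lambda(x_w) = \lambda_w x_w^{\epsilon}$.

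The main subtleties are structural rather than computational. One must check that the localized ring really is a general skew quantum polynomial ring over a division ring; this is precisely why the independence of multiparameters is assumed in $(S^{-1}R)^*/N$ rather than in $R^*/N$, echoing the remark preceding Theorem \ref{theorem3.3.4}. Injectivity of $\varphi$ is used twice: upward to transfer the three nonvanishing hypotheses, and downward to pin the coefficients back to $R$. A minor but important point is that whenever $\lambda(x_w) \neq 0$ for some $w > r$, the sign $\epsilon$ must equal $+1$, since $x_w^{-1}$ does not exist in $\mathcal{O}_{Q,\sigma}$; this mirrors the remark following Theorem \ref{theorem2.1-2001} and keeps a single $\epsilon$ valid for all $w$.
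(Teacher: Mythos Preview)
Your proposal is correct and follows essentially the same route as the paper: localize via Lemma~\ref{lema4.4.4}, apply the structure theorem over the division ring $S^{-1}R$ (the paper invokes the argument of Theorem~\ref{theorem2.1-2001}, which is equivalent to your use of Theorem~\ref{theorem4.3.2}), and descend using injectivity of $\varphi$ and uniqueness of representation. Your write-up is in fact slightly more explicit than the paper's in justifying why the invertibility hypotheses of Theorem~\ref{theorem4.3.2} hold automatically over $S^{-1}R$ and in noting the $\epsilon=+1$ constraint when some $w>r$ has $\lambda(x_w)\neq 0$.
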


\begin{proof}
If $\lambda(x_i) \neq 0$, then $\overline{\lambda}(x_i)\neq 0$ (using the construction of $\overline{\lambda}$), therefore, there are at lest three indices $i,j,u$ such that $\overline{\lambda}(x_i),\overline{\lambda}(x_j),\overline{\lambda}(x_u)\neq 0$.
\\

According to the hypothesis, $\mathcal{O}_{Q,\overline{\sigma}}$ is a general skew quantum polynomial ring over $S^{-1}R$. So, there are $\lambda_1,\ldots, \lambda_n \in S^{-1}R$ such that $\overline{\lambda}(x_w)=\lambda_wx_w^\epsilon$ for $w=1,\ldots, n$ and $\epsilon = \pm 1$ (the arguments used in the proof of Theorem \ref{theorem2.1-2001}  are also valid to deduce this fact).
\\

Suppose that $\lambda(x_i)=\sum a_tx^t$, then $\sum \frac{a_t}{1}x^t=\varphi(\lambda(x_i))=\overline{\lambda}(\varphi(x_i))=\overline{\lambda}(x_i)=\lambda_ix_i^\epsilon$. Given the unique representation of the elements of $\mathcal{O}_{Q, \overline{\sigma}}$, $\frac{a_t}{1}=0$ if $t\neq (0,\ldots,0,\overbrace{\epsilon}^{i},0\ldots,0)=:i$ and $\lambda_i=\frac{a_i}{1}$, therefore $a_t=0$ if $t \neq i$, i.e, $\lambda(x_i)=a_ix_i^\epsilon$. So,  we concluded that 
\begin{eqnarray}
\lambda(x_w)=\lambda_wx_w^\epsilon,
\end{eqnarray}
for $\lambda_1,\ldots, \lambda_n \in R$ and $\epsilon = \pm 1$.
\end{proof}

\begin{cor}\label{corolary4.4.6}
If $\mathcal{O}_{Q, \sigma}$, in the situation of theorem \ref{theorem4.4.5}, and $\lambda \in Aut(\mathcal{O}_{Q,\sigma})$. Then, there are $\lambda_1, \ldots, \lambda_n \in R^*$, such that
\begin{eqnarray}
\lambda(x_w)=\lambda_wx_w^\epsilon,
\end{eqnarray}
for $w=1,\ldots, n$ and $\epsilon = \pm 1$.
\end{cor}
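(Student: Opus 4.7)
The plan is to combine Theorem \ref{theorem4.4.5} with the fact that an automorphism preserves units, applying the theorem a second time to $\lambda^{-1}$ in order to handle the indices where $x_w$ is not itself a unit.

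Since $\lambda$ is bijective, in particular injective, $\lambda(x_i)\neq 0$ for every $i$ and the three-index hypothesis of Theorem \ref{theorem4.4.5} is satisfied; the theorem therefore produces $\lambda_1,\dots,\lambda_n\in R$ and a sign $\epsilon=\pm 1$ with $\lambda(x_w)=\lambda_w x_w^{\epsilon}$ for all $w$. The whole corollary then reduces to upgrading each $\lambda_w$ from $R$ to $R^{*}$.

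A useful preparatory observation is that $R\cap\mathcal{O}_{Q,\sigma}^{*}=R^{*}$: if $a\in R$ has an inverse $b=\sum_t b_t x^t\in\mathcal{O}_{Q,\sigma}$, then reading off coefficients in the free left $R$-basis and using that $R$ is a domain forces $b=b_0\in R$, hence $a\in R^{*}$. With this in hand, whenever $x_w^{\epsilon}$ is a unit of $\mathcal{O}_{Q,\sigma}$ --- which happens automatically for $w\le r$, and for every $w$ when $\epsilon=-1$ (because the remark following Theorem \ref{theorem2.1-2001} forces $r=n$ in that case) --- the element $\lambda(x_w)=\lambda_w x_w^{\epsilon}$ is a unit, and cancellation of the unit $x_w^{\epsilon}$ gives $\lambda_w\in\mathcal{O}_{Q,\sigma}^{*}\cap R=R^{*}$.

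What remains is the case $\epsilon=1$ and $w>r$. For those indices I would apply Theorem \ref{theorem4.4.5} to $\lambda^{-1}\in Aut(\mathcal{O}_{Q,\sigma})$ (again satisfying the hypothesis since $\lambda^{-1}$ is bijective), obtaining $\lambda^{-1}(x_w)=\mu_w x_w^{\epsilon'}$ for some $\mu_w\in R$ and $\epsilon'=\pm 1$; since $r<n$ and $\lambda^{-1}(x_w)\neq 0$ for some $w>r$, the same remark forces $\epsilon'=1$. Composing $\lambda\circ\lambda^{-1}$ and $\lambda^{-1}\circ\lambda$ then yields $\mu_w\lambda_w x_w=x_w=\lambda_w\mu_w x_w$, and uniqueness of representation in the free basis forces $\mu_w\lambda_w=\lambda_w\mu_w=1$, so $\lambda_w\in R^{*}$ with inverse $\mu_w$. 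The main delicate point is the interplay of the two signs $\epsilon$ and $\epsilon'$; once one notices that either sign can only be $-1$ when $r=n$, the cases split cleanly and the proof is a straightforward assembly of Theorem \ref{theorem4.4.5}, the preservation of units by $\lambda$, and the preliminary identity $R\cap\mathcal{O}_{Q,\sigma}^{*}=R^{*}$.
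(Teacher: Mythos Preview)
Your proof is correct. The paper states this corollary without proof, treating it as an immediate consequence of Theorem~\ref{theorem4.4.5}; your argument supplies exactly the details one would expect---the identity $R\cap\mathcal{O}_{Q,\sigma}^{*}=R^{*}$, the case split on whether $x_w^{\epsilon}$ is already a unit, and the application of Theorem~\ref{theorem4.4.5} to $\lambda^{-1}$ to treat the indices $w>r$ when $\epsilon=1$---and all of these steps are sound.
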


{\color{red}
\section{Examples}}
The following are examples in that the results of this paper cn be applied.
\\

\begin{enumerate}
\item Let $R[x_1;\sigma_1,\delta_1]\cdots [x_n; \sigma_n,\delta_n]$ be an iterated skew polynomial ring of bijective type, i.e., the following conditions hold:
\begin{enumerate}
\item for $1\leq i \leq n$, $\sigma_i$  is bijective;
\item for every $r \in R$ and $1 \leq i \leq  n$, $\sigma_i(r)$, $\delta_i(r)\in R$;
\item for $i < j$, $\sigma_j (x_i ) = cx_i + d$, with $ c, d \in R$ and $c$ has a left inverse;
\item for $i < j$, $\delta_j (xi ) \in R + Rx_1 + \cdots + Rx_n$ ,
\end{enumerate}
then, $R[x_1;\sigma_1,\delta_1]\cdots [x_n; \sigma_n,\delta_n]$ is a bijective skew $PBW$ extension.
In particular, quantum polynomial rings with $r=0$.
\item Diffusion algebras: a diffusion algebra $\mathcal{A}$ is generated by $\{ D_i,x_i : 1\leq i \leq n\}$ over $k$ with relations 
\begin{eqnarray}
x_ix_j=x_jx_i, 	\quad x_iD_j=D_jx_i 	\quad 1\leq i \leq n. \nonumber \\
c_{ij}D_iD_j-c_{ji}D_jD_i=x_jD_i-x_iD_j, \quad i <j, c_{ij},c_{ji} \in k^*.
\end{eqnarray}
Thus, $A \cong \sigma(k[x_1,\ldots,x_n])\langle D_1,\ldots, D_n \rangle$.

\item In the quantum algebras we have the following cases:

\begin{enumerate}
\item Multiplicative analogue of the Weyl algebra. The $k-$algebra $\mathcal{O}_n(\lambda_{ji})$ is generated by $x_1,\ldots,x_n$ subject to the relations
\begin{eqnarray}
x_jx_i=\lambda_{ji}x_ix_j, \quad 1\leq i < j \leq n,
\end{eqnarray}
where $\lambda_{ji} \in k^*$. We note that $\mathcal{O}_n(\lambda_{ji})\cong \sigma(k)\langle x_1,\ldots,x_n \rangle$.

\item $3-$dimensional skew polynomial algebra $\mathcal{A}$. It is given by the relations
\begin{eqnarray}
yz-\alpha zy=\lambda, \quad zx-\beta xz=\mu, \quad xy-\gamma yx=\nu, \nonumber
\end{eqnarray}
such that $\lambda, \mu, \nu \in k+kx+ky+kz$, and $\alpha,\beta, \gamma \in k^*$. Thus, $\mathcal{A}\cong \sigma(k)\langle x,y,z \rangle$.

\item The algebra of differential operators $D_q(S_q)$ on a quantum space $S_q$. Let $k$ be a commutative ring and le $q=[q_{ij}]$ be a matrix with entries in $k^*$ such that $q_{ii}=1=q_{ij}q_{ji}$ for all $1\leq i,j \leq n$, subject to the relations $x_ix_j=q_{ij}x_jx_i$. The algebra $S_q$ is regarded as the algebra of functions on a quantum space. The algebra $D_q(S_q)$ of $q-$differential operators on $S_q$ is defined by
\[ \partial_ix_j-q_{ij}x_j\partial_i=\delta_{ij} \quad \text{for all }i,j.\]
The relations between $\partial_i$, are given by
\[ \partial_i \partial_j=q_{ij}\partial_j\partial_i, \text{for all }i,j.\]
Therefore, $D_q(S_q)\cong \sigma(\sigma(k)\langle x_1,\ldots,x_n \rangle)\langle \partial_1,\ldots,\partial_n \rangle$.
\end{enumerate}
\end{enumerate}


\begin{bibdiv}
  \begin{biblist}



 \bib{alev-chamarie}{article}{
   author={ALEV J., CHAMARIE M.},
   title={D\'erivations et automorphismes de quelques alg\`ebres quantiques},
   journal={ Communications in Algebra},
   date={1992},
   volume={20},
   pages={1787-1802} 
 }
 
    \bib{va1997}{article}{
      author={ V.A. ARTAMONOV.},
      title={ Quantum polynomial algebras},
      journal={  Journal of Mathematical Sciences},
      date={1997},
      volume={87-3},
      pages={3441-3462}
     }

\bib{va2001-2}{article}{
      author={ V.A. ARTAMONOV y ROBERT WISBAUER},
      title={Homological Properties of Quantum Polynomials},
      journal={ Algebras and Representation Theory},
      date={2001},
      volume={4-3},
      pages={219–247}
     }

\bib{lezama}{article}{
      author={ O. LEZAMA \& M. REYES},
      title={Some homological properties of skew PBW extensions},
      journal={  to appear in Comm. in Algebra.},
     }

 \bib{lezamalibro}{book}{
   author={LEZAMA, O.},
   author={ FAJARDO, A.},
   author={GALLEGO, C.},
   author={REYES, M.},
   title={ Constructive Homological Algebra
   over skew $P BW$ Extensions},
   series={ in preparation},
   publisher={Departamento de Matemáticas, Facultad de Ciencias, Universidad Nacional de Colombia},
   address={Bogotá},
   date={2013}
 }

 \bib{lezamahomologica}{book}{
   author={LEZAMA, O.},
   title={ CUADERNOS DE ÁLGEBRA: Álgebra Homológica },
   publisher={Departamento de Matemáticas, Facultad de Ciencias, Universidad Nacional de Colombia},
   address={Bogotá},
   date={2012}
 }
 
  \end{biblist}
\end{bibdiv}

\end{document}